\documentclass[leqno,12pt]{article}

\usepackage{pgf,tikz} 
\usetikzlibrary{shapes,arrows,chains}
\usepackage{setspace} 

\usepackage{enumerate} 
\usepackage{enumitem}
\usepackage{amsmath,amsfonts,amssymb,amsthm,stmaryrd,bbm,graphicx,mathtools,enumerate}
\usepackage{mathrsfs}
\usepackage[utf8]{inputenc} 
\usepackage[english]{babel}
\usepackage{wasysym} 
\usepackage{color}
\usepackage{xcolor}

\usepackage{graphicx}

\usepackage[most]{tcolorbox}

\usepackage[final]{hyperref}  
\hypersetup{
    linktoc=page,
   linkcolor=red,          
  citecolor=blue,        
    filecolor=blue,      
   urlcolor=cyan,
    colorlinks=true           
}

\usepackage[refpage]{nomencl}

\usepackage{makeidx}
\makeindex

\evensidemargin\oddsidemargin

\newcommand{\eqnsection}{
\renewcommand{\theequation}{\thesection.\arabic{equation}}
   \makeatletter
   \csname  @addtoreset\endcsname{equation}{section}
   \makeatother}
\eqnsection

\def\R{{\mathbb R}}

\def\P{{\bf P}}
\def\E{{\bf E}}
\def\Q{{\bf Q}}
\def\Z{{\mathbb Z}}
\def\N{{\mathbb N}}
\def\T{{\mathbb T}}
\def\cL{{\mathcal L}}
\def\cR{{\mathcal R}}
\def\B{{\mathcal B}}

\def\d{\, \mathrm{d}}

\def\nf{\mathcal{F}}

\newtheorem{thm}{Theorem}[section]
\newtheorem{prop}[thm]{Proposition}
\newtheorem{lem}[thm]{Lemma}

\newtheorem{rem}[thm]{Remark}

\newcommand{\floor}[1]{{\left\lfloor #1 \right\rfloor}}

\newcommand{\ind}[1]{\mathbf{1}_{\left\{ #1 \right\}}}
\renewcommand{\epsilon}{\varepsilon}

\newcommand{ \eps }{\varepsilon}

\newcommand{\V}{\mathbb{V}ar}

\usepackage{stmaryrd} 

\title{\bf Critical branching random walks on $\mathbb{Z}^2$: local survival probabilities and Yaglom limit theorems
}
 
\date{\today}

\author{Tianyi Bai\footnote{State Key Laboratory of Mathematical Sciences, Academy of Mathematics and Systems Science, Chinese Academy of Sciences, Beijing, China, E-mail: \texttt{tianyi.bai73@amss.ac.cn};
\newline\vspace{0.1cm}\hspace{0.2cm} $\dag$ Beijing Normal University, School of Mathematical Sciences, Beijing, China, E-mail: \texttt{xinxin.chen@bnu.edu.cn};
\newline\vspace{0.1cm}\hspace{0.2cm} $\ddag$ Sorbonne Universit\'e, Universit\'e Paris Cit\'e, CNRS, Laboratoire de Probabilit\'es, Statistique et Mod\'elisation, LPSM, F-75005 Paris, France, E-mail: \texttt{shen.lin@sorbonne-universite.fr}}, $\ $Xinxin Chen$^\dag$, and  Shen Lin$^\ddag$ }

\begin{document}

\maketitle

\begin{abstract}

We consider a branching random walk on \(\Z^2\) with critical offspring of mean \(1\) and spatial motion governed by the jumps of a lazy simple random walk. For every site \(x\in\Z^2\), we obtain a uniform asymptotical estimate for the local survival probability, i.e., the probability that there are particles at \(x\) at large time \(n\). Using Stein's method, we establish a Yaglom-type theorem for the number of particles at \(x\) at time \(n\) when \(x\) is at distance of order \(\sqrt n\) from the origin. Moreover, at the position occupied by a typical particle at time \(n\), the number of particles at that site, normalized by \(\log n\), converges in law to a Gamma distribution, thereby confirming a conjecture of Lalley and Zheng [Ann.~Probab.~39 (2010), 327--368]. Finally, we prove that, conditional on local survival, the total number of particles at time \(n\), divided by \(n\), also converges weakly to a Gamma distribution.

\medskip

\noindent\textit{Keywords: Branching random walk, Yaglom theorem, Stein's method.
\newline  Mathematics Subject Classification: 60J80, 60G60, 60F05.
}

\end{abstract}

\section{Introduction: background, motivation and main results}
\label{Intro}
\subsection{Critical branching random walk and its occupation statistics}

Let us consider a nearest-neighbor branching random walk on the integer lattice $\Z^d$: we start at time 0 with a single ancestor located at some site $x\in \Z^d$. At time 1, the ancestor dies and produces a random number of children according to the offspring law $\mu$. They form the first generation, in which each particle makes an independent jump from $x$ according to the jump law $\nu= \frac{1}{2d+1}\sum_{\|e\|\le 1}\delta_e$, with $\|\cdot\|$ denoting the Euclidean norm. In other words, each child moves to one of the $2d$ nearest-neighbors of $x$ or stays at $x$, with equal probability. Inductively, at time $n+1$ for $n\geq 1$, the particles of the $n$-th generation die and give birth independently according to the same offspring law $\mu$ to some children forming the $(n+1)$-th generation. Each new born individual immediately makes an independent jump from the position of its parent according to~$\nu$. The system continues to evolve in this way. 

The genealogical tree $\T$ of this discrete-time branching system is a Bienaym\'e--Galton--Watson (BGW) tree with offspring law $\mu$. 
This particle system dies out almost surely when $\mu$ has mean at most $1$.
In particular, we focus on the critical case where $\mu$ has mean $1$, $\T$ is almost surely finite.
This critical BGW tree $\T$ will be viewed as a random finite subset of $\mathcal{U}=\{\varnothing\}\cup(\cup_{k\ge 1}\N^k)$ where $\N=\{1,2,\ldots\}$, and $\varnothing$ represents the ancestor (also called the root of the tree $\T$). 
For any individual $u$ at the $n$-th generation with $n\ge1$, it is labeled as $u=(u_1,\cdots, u_{n-1}, u_n)\in \N^n$, which means that it is the $u_n$-th child of the $u_{n-1}$-th child of $\cdots$ of the $u_1$-th child of $\varnothing$. 
The generation of $u$ is defined as $|u|=n$. In particular, $|\varnothing|=0$. We write $\prec$ for the lexicographic order on $\mathcal{U}$, so that $\varnothing\prec 1 \prec (1,1) \prec 2$ for instance. 
For $0\le k\le |u|$, we set $u\vert_k = (u_1,\cdots, u_k)$ to be the ancestor of $u$ at $k$-th generation. The genealogical (partial) order $\leq$ on $\mathcal{U}$ is defined by saying that $u\leq v$ if and only if there exists some $0\leq k \leq |v|$ such that $u=v\vert_k$. In this case, we also say that $v$ is a descendant of $u$. We write $u<v$ if $u\leq v$ and $u\neq v$.

For each particle $u\in\T$, let $S_u\in \Z^d$ denote its spatial position. The critical branching random walk (CBRW for short) is therefore given by $\{(u,S_u), u\in\T\}$. For a CBRW starting from $S_\varnothing=z$, we define it under the probability measure $\P_z$. The corresponding expectation is denoted by $\E_z$. For simplicity, we write $\P$ and $\E$ for $\P_0$ and $\E_0$ respectively. 

We are interested in the spread of the individuals in $\Z^d$. For this purpose, we define the counting measure $Z_n(\cdot)$ at time $n$ by
\[
Z_n(B):=\sum_{|u|=n}\ind{S_u\in B}, \quad \forall B\subset\Z^d,
\]
with the convention that $\sum_\varnothing=0$. 
Given any subset $B\subset \Z^d$, we write the local survival probability as $u_n(B) := \P(Z_n(B) \ge 1)$.
For convenience, we set $Z_n := Z_n(\Z^d)$, and write $Z_n(z) := Z_n(\{z\})$, $u_n(z):=u_n(\{z\})$ for all $z\in \Z^d$.

Clearly, $\{Z_n, n\ge0\}$ is a BGW process with offspring law $\mu$. 
Throughout this work, we will always assume that 
\begin{equation}\label{hyp}
\E[Z_1]=\sum_{k\geq 0} k\mu(k)=1 \quad \textrm{ and }\quad  \sigma^2:=\V_\P(Z_1)= \sum_{k\geq 0} k^2\mu(k)-1 \in (0,\infty).
\end{equation}
A well-known estimate of Kolmogorov says that
\[
\lim_{n\to\infty}n\P(Z_n\ge 1) = \frac{2}{\sigma^2 }.
\]
Furthermore, the classical Yaglom theorem \cite{Yaglom} states that conditional on $\{Z_n\ge 1\}$, $\frac{Z_n}{\sigma^2n/2}$ converges in law to the exponential distribution $\mathrm{Exp}(1)$ with parameter $1$. 

If the CBRW starts with $n$ ancestral particles at time 0, the number of ancestral particles whose descendants survive until time $n$ follows, approximately for large $n$, a Poisson distribution with mean $\frac{2}{\sigma^2}$, and the number of particles $Z_n$ alive at time $n$ is of order $n$. According to Watanabe \cite{Watanabe}, under suitable hypotheses on the initial distribution of ancestral particles, as $n\to \infty$, the measure-valued process $Z_n(\cdot)$ associated with the CBRW converges, after being properly rescaled, to the super-Brownian motion $\mathbf{X}_t$ with diffusion parameter~$\sigma^2$. In dimensions 2 and higher, the random measure $\mathbf{X}_t$ is, for each $t > 0$, almost surely singular with respect to the Lebesgue measure on $\R^d$. When $d\geq 3$, it was established by Perkins \cite{Perkins} that the super-Brownian motion $\mathbf{X}_t$ spreads its mass over its closed support in a uniform manner according to some deterministic Hausdorff measure. A less precise result \cite[Theorem 2]{Perkins} also holds for $d=2$.

In our discrete setting, the occupation statistics of CBRW have been studied by Lalley \cite{Lalley} in the one-dimensional case, and by Lalley and Zheng \cite{LZ11} in dimensions two and higher. See also R\'ev\'esz \cite[Chapter 7]{Revesz}.
To be more precise, recall that $Z_n(x)$ is the number of particles at site $x\in \Z^d$ at time $n$. Let $R_n$ be the total number of sites occupied by the CBRW at time $n$. For $j\geq 1$, we write $M_n(j)$ for the number of sites in $\Z^d$ occupied by exactly $j$ particles of generation~$n$. Finally, we set $V_n:=\max_{x\in \Z^d} Z_n(x)$ to be the maximal number of particles at a single site at time $n$. 
\begin{itemize}
    \item When the dimension $d\geq 3$, Theorem 5 in \cite{LZ11} extends the classical theorem of Yaglom as follows. For certain (non-explicit) constants $(\kappa_j)_{j\geq 1}$ such that $\sum_{j=1}^\infty j\cdot \kappa_j=1$, under $\P(\cdot\,\vert Z_n\ge1)$, the following joint convergence in law holds:
\begin{equation}
    \label{eq:Yaglom-high-dim}
    \bigg(\frac{Z_n}{n}, \left\{\frac{M_n(j)}{n}\right\}_{j\geq 1}, \frac{R_n}{n} \bigg)\xrightarrow[n\to\infty]{\mathrm{(d)}} \bigg(1,\{\kappa_j\}_{j\geq 1}, \sum_j \kappa_j \bigg)\times Y,
\end{equation}
where $Y$ is an exponentially distributed random variable with mean $2/\sigma^2$. 
Moreover, when $d\geq 3$, if the offspring law $\mu$ has finite $k$-th moment for some integer $k\geq 2$, the conditional distributions of $V_n/n^{1/k}$ given survival to generation $n$ are tight according to Theorem 2 of \cite{LZ11}. If $\mu$ has some finite exponential moment, Theorems 3 and 4 of \cite{LZ11} improve the last result by showing that $V_n$ is of order $\log n$ on the event of survival at generation $n$.

\item When the dimension $d=2$, since the two-dimensional random walk is recurrent, we expect different behaviors for the occupation statistics of CBRW. First, assume that the offspring law $\mu$ has some finite exponential moment, we only know that the order of magnitude of $V_n$ lies between $\log n$ and $(\log n)^2$ (see again Theorems 3 and 4 of \cite{LZ11}). In comparison, for one-dimensional CBRW $V_n$ is shown to be of order $\sqrt n$ (see \cite[Theorem 7.10]{Revesz} and \cite{Lalley}). 
Secondly, consider a typical particle $u(n)$ at time $n$, which is chosen uniformly from the $n$-th generation of the CBRW. Let $T_n=Z_n(S_{u(n)})$ be the number of particles at time~$n$ at the location $S_{u(n)}$ of the typical particle. Theorem 6 of \cite{LZ11} states that in dimension $2$, the conditional distributions of $T_n/\log n$ given survival to generation $n$ are tight. Furthermore, for some sufficiently small $\varepsilon>0$, there exists $\delta>0$ such that 
\[
    \liminf_{n\to \infty} \P(T_n\geq \varepsilon \log n \vert Z_n\geq 1)\geq \delta.
\]
Combining this with Theorem 7 in \cite{LZ11}, Lalley and Zheng have identified $n/\log n$ as the correct order of magnitude for the number of occupied sites $R_n$ at time $n$. 

Let us mention that complete arguments for the preceding results on $T_n$ and $R_n$ are only given in \cite{LZ11} only in the special case of binary fission, where the offspring law is $\mu=\frac12\delta_0+\frac12\delta_2$ (the ``double-or-nothing'' case).
\end{itemize}

\subsection{Main results for 2-dimensional CBRW}

The present work focuses on the two-dimensional CBRW. Given the previous results of Lalley and Zheng, it is natural to ask whether a Yaglom-type theorem similar to \eqref{eq:Yaglom-high-dim} holds for $d=2$, and whether the conditional distributions of $\frac{T_n}{\log n}$ given survival up to generation $n$ converge in law as $n\to \infty$. 

Throughout this paper, we follow Lalley and Zheng \cite{LZ11} and fix the jump law as
\begin{align}\label{eq:jump}
    \nu=\frac15\delta_{(0,0)}+\frac15\sum_{\|e\|=1, e\in\Z^2}\delta_e,
\end{align}
in order to avoid some non-essential periodicity problems. 
Our arguments work directly for any aperiodic, irreducible and symmetric jump law on $\Z^2$ with finite support. 
We expect our main results stated below to hold for more general jump laws.
Needless to say, the constant factors that depend on the jump law should be changed accordingly.

Under the probability measure $\mathbb{P}_0$, we define the ordinary random walk $\{S_n, n\ge0\}$ on $\Z^2$ starting at the origin, with i.i.d.~increments distributed as $\nu$. It will be referred to as the lazy simple random walk. We use $P_n(x):=\mathbb{P}_0(S_n=x)$ to denote the associated $n$-step transition probability. 
We set 
\[
    p_n(x):=\frac{5}{4\pi n}\exp\Big(-\frac{5\|x\|^2}{4n}\Big).
\]
It is known from \cite[(2.5)]{Lawler-Limic} 
that uniformly in $x\in\Z^2$,
\begin{equation}
\label{Pnx}
|P_n(x)- p_n(x)|= \frac{O(1)}{n^2} \quad \mbox{as }n\to \infty.
\end{equation}
Conditioning on the first generation of CBRW, and by induction on $n$, we obtain the elementary identity 
\[
    \E[Z_n(x)]=P_n(x).
\]
Recall that $u_n(x)=\P(Z_n(x) \ge 1)$, we thus have $\E[R_n]=\sum_{x\in \Z^2} u_n(x)$.

Our first theorem gives the precise asymptotics of the local survival probability $u_n(x)$. Hereafter, we denote by $o_n(1)$ any term that vanishes as $n\to\infty$. When such a term depends on a variable $x$, we say that it is uniform in $x$ if the convergence to $0$ is uniform in $x$, i.e. $\sup_x |o_n(1)|\to 0$. We say that $a_n=O(1)$ if the sequence $(a_n)$ is bounded, i.e. $\sup_n |a_n|<\infty$. Likewise, we say that $a_n(x)=O(1)$ uniformly in $x$ if $\sup_{n,x}|a_n(x)|<\infty$.

\begin{thm}\label{thm: Sprobab}
Under the assumption \eqref{hyp}, uniformly in $x\in\Z^2$, we have 
\begin{equation}
\label{asymp-unx}
u_n(x) = \frac{P_n(x) }{\alpha \log n} + \frac{o_n(1)}{n\log n}, \quad \mbox{as }n\to \infty.
\end{equation}
where $\alpha = \frac{5\sigma^2}{16\pi}$. 
If we assume further that the offspring law has a finite third moment, then for any $\varepsilon\in (0,1)$, uniformly in $x\in\Z^2$, we have
\begin{equation}
\label{asymp-unx-strong}
u_n(x) = \frac{P_n(x) }{\alpha \log n} + \frac{O(1)}{n(\log n)^{3/2-\varepsilon}}, \quad \mbox{as }n\to \infty.
\end{equation}
\end{thm}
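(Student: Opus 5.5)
We derive a nonlinear discrete evolution equation for $u_n(x)$ by conditioning on the first generation, and then show by a bootstrap that its nonlinear part behaves like a self-consistent logarithmic correction. Let $f(s)=\sum_k\mu(k)s^k$ and $v_n(x)=\P_x(Z_n(0)\ge1)$; by symmetry of $\nu$, $v_n(x)=u_n(-x)=u_n(x)$. Conditioning on the offspring of the root gives
\[
1-u_{n+1}(x)=f\big(1-(\nu * u_n)(x)\big),\qquad u_{n+1}=\nu*u_n-\Phi(\nu*u_n),
\]
where $\Phi(t)=f(1-t)-1+t=\tfrac{\sigma^2}{2}t^2+O(t^3)$ under a third moment, and $\Phi(t)=\tfrac{\sigma^2}{2}t^2(1+o_{t\to0}(1))$ in general. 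Iterating with the transition kernel (Duhamel) gives
\[
u_n(x)=P_n(x)-\sum_{k=1}^{n}\sum_{y}P_{n-k}(x-y)\,\Phi\big((\nu*u_{k-1})(y)\big).
\]

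\textbf{A priori bounds.} First, $u_n(x)\le P_n(x)$ by Markov's inequality. Second, $u_n(x)\le \P(Z_n\ge1)\le C/n$ by Kolmogorov. Third, we need the order $u_n(x)\asymp P_n(x)/\log n$ in the bulk. The upper bound should follow from a second-moment/first-visit decomposition: $u_n(x)\ge \E[Z_n(x)]^2/\E[Z_n(x)^2]$, and the many-to-two formula gives $\E[Z_n(x)^2]-\E[Z_n(x)]=\sigma^2\sum_{k<n}\sum_y P_k(y)P_{n-k}(x-y)^2\asymp P_n(x)\log n$ in $d=2$, which is where $\log n$ comes from. A matching upper bound $u_n(x)\le C P_n(x)/\log n+C'/(n\log n)$ comes from plugging the lower bound back into the Duhamel formula.

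\textbf{Identifying the constant.} Posit $u_k(y)\approx P_k(y)/(\alpha\log k)$. The nonlinear term is then
\[
\frac{\sigma^2}{2}\sum_k\sum_y P_{n-k}(x-y)\frac{P_k(y)^2}{\alpha^2\log^2k}.
\]
Using $\sum_yP_{n-k}(x-y)P_k(y)^2\approx P_n(x)\cdot\frac{5}{8\pi k}$ for $k\ll n$, via Gaussian computations with \eqref{Pnx}, the sum is $\approx P_n(x)\frac{5\sigma^2}{16\pi\alpha^2}\sum_{k\le n}\frac{1}{k\log^2k}$. Since this tail sum does not give $P_n(1-1/(\alpha\log n))$ directly, the naive Duhamel form is wasteful, because $P_n$ minus something of the same order must cancel. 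So instead I would work with the reciprocal quantity $w_n(x)=u_n(x)/P_n(x)$, or more robustly with $1/u$-type linearization along the spine.

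The cleanest route is a many-to-one/spine decomposition:
\[
u_n(x)=\E[Z_n(x)\,\mathbf 1/Z_n(x)\,;Z_n(x)\ge1]=P_n(x)\,\mathbf{Q}\big[1/Z_n(x)\big],
\]
under the size-biased measure with a spine ending at $x$. Along the spine, $Z_n(x)=1+\sum_k(\text{contributions of subtrees grafted at time }k)$. The expectation of the grafted contributions is $\sigma^2\sum_k P_{n-k}(x-S_k^{\text{bridge}})\approx \sigma^2\sum_k\frac{5}{4\pi (n-k)}\approx\frac{5\sigma^2}{4\pi}\log n$. Yet the relevant quantity is $\mathbf Q[1/Z]$, which is governed by the probability that each graft hits $x$, namely $u_{n-k}$. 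Writing $\mathbf Q[1/Z_n(x)]$ via the Yaglom-type heavy contributions gives the self-consistency equation $\alpha\log n=\sum_k \tfrac{\sigma^2}{2}\cdot\frac{5}{4\pi(n-k)}\cdot(\dots)$, whose solution yields $\alpha=\frac{5\sigma^2}{16\pi}$. Concretely, I would prove $\frac{d}{dn}\big(1/\text{(normalized }u)\big)\approx \frac{\sigma^2}{2}\cdot\frac{5}{8\pi n}$, i.e. $\frac{P_n}{u_n}\approx \alpha\log n$ with $\alpha=\frac{\sigma^2}{2}\cdot\frac{5}{8\pi}$, which matches.

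\textbf{Execution and main obstacle.} Rigorously, I would define $g_n(x)=P_n(x)/u_n(x)$ and derive from the recursion an approximate linear recursion $g_{n+1}\approx g_n+\frac{5\sigma^2}{16\pi n}+\text{error}$ averaged against the bridge. The bulk of the work is a bootstrap over dyadic scales, controlling the error uniformly in $x$, including $\|x\|\gg\sqrt n$ where both sides are $o(1/(n\log n))$. The main obstacle is this uniform error control. The errors come from the $O(t^3)$ or $o(t^2)$ in $\Phi$, from \eqref{Pnx}, and from the tail regimes $k$ close to $n$ and $k$ of order $n$. The logarithmic relative error $O((\log n)^{-1/2+\varepsilon})$ in \eqref{asymp-unx-strong} should come from balancing the early-time window $k\le n^{\delta}$, where the approximation is poor, against the slow convergence $\log k/\log n$.
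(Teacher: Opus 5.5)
\textbf{There is a genuine gap: two concrete steps of the proposal fail, and the decomposition that makes the problem tractable is missing.}

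\emph{The a priori upper bound.} Paley--Zygmund gives the \emph{lower} bound $u_n(x)\gtrsim P_n(x)/\log n$, not the upper bound. Plugging a lower bound into the Duhamel formula cannot give $u_n\lesssim P_n/\log n$. Since $\Phi\ge0$ is increasing, $u_k\ge\underline{u}_k:=cP_k/\log k$ only yields $u_n\le P_n-\sum_kP_{n-k}\ast\Phi(\nu\ast\underline{u}_{k-1})$. The subtracted term is $P_n(x)$ times $O(1)$ small-$k$ contributions plus roughly $\alpha c^2\sum_k\frac{1}{k\log^2k}$, a convergent series. So you only get $u_n\le(1-c')P_n$. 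Reaching $P_n/\log n$ needs the nonlinear term to equal $P_n(1-O(1/\log n))$, an exact leading-order cancellation that no one-sided input provides. Your bootstrap rests on this bound.

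\emph{The recursion for $g_n$.} The step $g_{n+1}\approx g_n+\frac{5\sigma^2}{16\pi n}$ ``averaged against the bridge'' is the whole proof, and it is not derived. Linearizing $u_{n+1}=\nu\ast u_n-\Phi(\nu\ast u_n)$ with $g_n=P_n/u_n$ treated as locally constant gives $g_{n+1}(x)\approx g_n(x)+\frac{\sigma^2}{2}P_{n+1}(x)$. At the origin this increment is $2\alpha/n$, which would produce $g_n(0)\approx2\alpha\log n$. The correct constant appears only once the transport of the $x$-dependence of $g_n$ is kept. Then $g$ solves a nonlocal equation whose solution is a path average of $\frac{\sigma^2}{2}P_j$ along a random spine, namely the additive functional $\Gamma=\sum_jP_j(S_j)$, with mean $\sim\frac{5}{8\pi}\log n$. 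Your two evaluations of the graft mean, with $\frac{5}{4\pi(n-k)}$ in one place and $\frac{5}{8\pi n}$ in another, differ by exactly this factor $2$. Because $\Gamma$ is random, you also need its concentration, with tails strong enough to survive an $x$-dependent tilt of the path law, uniformly in $x$. Nothing in the proposal supplies this. Likewise, the $(\log n)^{-1/2+\varepsilon}$ rate comes from the $\sqrt{\log n}$-scale fluctuations of $\Gamma$ (Lemma~\ref{lem: SRW-sqrt}), not from a time-window balance. The third moment is used to make $\mathbb{E}[L_1R_1]<\infty$.

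\emph{The missing idea.} Your identity $u_n(x)=P_n(x)\,\E_{\Q^*}[1/Z_n(x)\,\vert\,S_{w_n}=x]$ is correct but not tractable as it stands. Evaluating it is essentially the size-biased Yaglom law, which the paper obtains \emph{from} Theorem~\ref{thm: Sprobab}. The fix is to replace $1/Z_n(x)$ by an indicator, using the lexicographically leftmost particle $\ell_n(x)$ at $x$. From $\ind{Z_n(x)\ge1}=\sum_{|u|=n}\ind{u=\ell_n(x)}$ one gets $u_n(x)=\Q^*(w_n=\ell_n(x))$, which factorizes along the spine. The same computation for $\E[Z_n(x)]$, where on $\{w_n=\ell_n(x)\}$ only right siblings contribute, gives after time reversal
\begin{align*}
u_n(x)&=\mathbb{E}\Big[\ind{S_n=x}\prod_{k=1}^n\big(1-u_{k-1}\ast\nu(S_k)\big)^{L_k}\Big],\\
P_n(x)&=\mathbb{E}\Big[\Big(1+\sum_{j=1}^nR_jP_j(S_j)\Big)\ind{S_n=x}\prod_{k=1}^n\big(1-u_{k-1}\ast\nu(S_k)\big)^{L_k}\Big].
\end{align*}
Thus $P_n(x)/u_n(x)$ is exactly a tilted average of $1+\sum_jR_jP_j(S_j)$, and no nonlinear bootstrap is needed:
\begin{itemize}
\item Kolmogorov's bound $u_{j-1}\lesssim1/j$ decouples $R_j$ from $L_j$ for $j>\log n$.
\item The ranges $j\le\log n$ and $j>n/\log n$ cost only $O(\log\log n)\,u_n(x)$.
\item The tilt is harmless because the weight is at most $1$, while $\{|\Gamma_{\lfloor\log n\rfloor+1}^{\lfloor n/\log n\rfloor}(0)-\frac{5}{8\pi}\log n|\ge\epsilon\log n\}$ has probability $o((\log n)^{-2})$ (Lemma~\ref{lem: SRW}, via KMT and a stationary Ornstein--Uhlenbeck sum).
\item Stopping at $n/\log n$ lets the Markov property bound the constraint $S_n=x$ by $\sup_yP_{n-\lfloor n/\log n\rfloor}(y)\lesssim1/n$.
\end{itemize}
The same identities first give the a priori bound $\sup_xu_n(x)\lesssim1/(n\log n)$ that your Duhamel step was meant to supply.
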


\begin{rem}
The uniform estimate \eqref{asymp-unx} is sufficiently good for the following results. 
\end{rem}

\begin{rem}
If \eqref{eq:jump} is replaced by another aperiodic, irreducible and symmetric jump law such that $P_n(0)= \frac {A+o_n(1)}n$ for some constant $A>0$, then
the constant $\alpha$ that appears in Theorem \ref{thm: Sprobab} is
\[
    \alpha=\frac{\sigma^2 A}{4}.
\]

\end{rem}

In view of \eqref{Pnx}, for $\|x\|=o(\sqrt{n})$, we derive from \eqref{asymp-unx} that
\[
u_n(x) = \frac{4+o_n(1)}{\sigma^2 n\log n}.
\]

In the binary fission case $\mu=\frac12\delta_0+\frac12\delta_2$, an upper bound of type $P_n(x)/\log n$ for $u_n(x)$  has already been established in \cite[Proposition 33]{LZ11}, while a similar lower bound can be easily obtained by the Paley--Zygmund inequality (see \cite[Proposition 31]{LZ11}). 
For the sake of completeness, our proof of Theorem \ref{thm: Sprobab} does not rely on \cite{LZ11}.

Our analysis of the local survival probability $u_n(x)$ goes through the spinal decomposition of CBRW, and is partially inspired by the idea of Zhu \cite{Zhu} of considering the leftmost individual in the lexicographic order at generation $n$ that arrives at $x$. 
The second and third authors of the present work have used similar techniques in \cite{CL} to study the total occupation time of CBRW on $\Z^d$. 
Moreover, one can easily generalize the arguments of Theorem \ref{thm: Sprobab} to any fixed bounded set $B\subset \Z^2$, which gives 
\begin{equation}
    \label{eq:local-survival-prob2} 
    \P(Z_n(B) \ge 1) \sim \frac{4}{\sigma^2 n\log n},
\end{equation}
as $n\to\infty$.

For the continuum counterpart, Durrett considered in \cite{Du1979} a continuous-time critical branching Brownian motion (CBBM) in $\R^2$ with binary branching, and conjectured that for any fixed bounded set $B\subset\R^2$, the local survival probability $\P(Z_t(B) \ge 1) \sim \frac{4}{t\log t}$ as $t\to \infty$. This has been confirmed by Lee \cite{Lee1991} using analytic techniques of differential equations. In fact, Theorem 2.2 in \cite{Lee1991} has treated this problem in a more general way. 

For dimensions higher than two, we should compare \eqref{eq:local-survival-prob2} with a recent result of Rapenne \cite{Rapenne} for CBRW. If $B$ is a fixed closed ball in $\R^d, d\geq 3$, for a class of general jump laws on $\R^d$, Lemma 2.10 in \cite{Rapenne} implies the existence of a constant $C_B>0$ (depending on $B$) such that 
\[
    \P(Z_n(B) \ge 1) \sim \frac{C_B}{n^{d/2}}.
\]
The analog of this estimate for CBBM when $d\geq 3$ can be found in Theorem 3.2 of \cite{Lee1991}, where the one-dimensional case is also stated in Theorem 1.2 of the same paper. 

Next, we have the following Yaglom theorem for CBRW in dimension 2.

\begin{thm}\label{thm: Yaglom}
Under the assumption \eqref{hyp}, for any fixed bounded set $B\subset \Z^2$ and for any sequence $(z_n)_{n\geq 1}$ in $\Z^2$ satisfying $\|z_n\|\leq K\sqrt{n}$ with some fixed real number $K>0$, we write $z_n+B=\{z_n+b\in \Z^2\colon b\in B\}$. Then, under $\P_x(\cdot\,\vert Z_n(z_n+B)\ge1)$, we have the convergence in law
\[
    \frac{Z_n(z_n+B)}{\alpha |B| \log n} \xrightarrow[n\to\infty]{\mathrm{(d)}} Y,
\]
where $\alpha = \frac{5\sigma^2}{16\pi}$ and $|B|$ denotes the cardinality of the set $B$. In the limit, $Y$ is an $\mathrm{Exp}(1)$-distributed random variable.
\end{thm}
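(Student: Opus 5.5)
The plan is to prove convergence of the conditional law through moments, or alternatively through Stein's method for the exponential distribution as the abstract suggests. Write $N_n := Z_n(z_n+B)$ and $m_n := \alpha|B|\log n$. We know $\E_x[N_n] = \sum_{b\in B} P_n(z_n+b-x)$. Theorem \ref{thm: Sprobab} (or its analogue \eqref{eq:local-survival-prob2} for $z_n+B$, obtained by the same spinal argument) gives
\[
\P_x(N_n\ge1) \sim \frac{\E_x[N_n]}{m_n}.
\]
Since $\|z_n\|\le K\sqrt n$, we have $P_n(z_n+b-x)\asymp 1/n$, so all these quantities are comparable to $1/n$ and their ratios are controlled by the local CLT \eqref{Pnx}. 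In particular the conditional mean is
\[
\E_x[N_n\mid N_n\ge1] = \frac{\E_x[N_n]}{\P_x(N_n\ge1)} \sim m_n,
\]
which fixes the normalization. It therefore suffices to show that the size-biased law of $N_n/m_n$ is asymptotically the law of $Y+Y'$, with $Y,Y'$ i.i.d.\ $\mathrm{Exp}(1)$. Equivalently, we want the Stein characterization $\E[f'(W)]-\E[f(W)]\to 0$ for $W$ the conditional law and smooth bounded $f$ with $f(0)=0$, since size-biasing an $\mathrm{Exp}(1)$ law gives $\mathrm{Gamma}(2,1)$.

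To get the size-biased law, use the many-to-one spinal decomposition. Pick a particle at $z_n+B$ at time $n$ with probability proportional to $\E_x[N_n]$; its trajectory is a random-walk bridge from $x$ to $z_n+b$. Along this spine, each ancestor at generation $k$ has a size-biased number of siblings. Each sibling starts an independent CBRW that contributes particles at $z_n+B$ at time $n$. Hence the size-biased $N_n$ equals
\[
1 + \sum_{k<n}\ \sum_{\text{siblings at }k} N^{(k)},
\]
where the $N^{(k)}$ are independent copies distributed as $N_{n-k}$ started from the spine position at time $k$.

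The key claim is that the only relevant contributions come from $k$ with $n-k\in[n^{\epsilon},\,n^{1-\epsilon}]$ (log-scale windows), where the spine is within distance $\sqrt{n-k}$ of $z_n+B$. For such a sibling subtree, the probability of hitting $z_n+B$ is $\approx \frac{|B|P_{n-k}(\cdot)}{\alpha\log(n-k)}$ by Theorem \ref{thm: Sprobab}, and given a hit it contributes about $m_{n-k}$ particles. The expected number of successful subtrees is
\[
\sum_k \frac{\sigma^2}{2}\cdot\frac{4/(5\pi)\cdot\text{const}}{(n-k)\log(n-k)} ,
\]
a sum of size $O(1)$ spread over $\log\log n$ scales. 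Under the logarithmic reparametrization $s=\log(n-k)/\log n\in(0,1)$, the successful subtrees form approximately a Poisson process. A subtree at scale $s$ contributes $\approx s\,m_n\,\mathrm{Exp}(1)$, by induction or self-consistency: the same Yaglom statement at time $n-k$, applied with $z$ at distance $O(\sqrt{n-k})$. This is the reason for the uniformity in $\|z_n\|\le K\sqrt n$.

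One can then write a fixed-point equation for the limit law $\mathcal L$ of $W$. The size-biased $W^*$ equals $\sum_i s_i W_i$ over a Poisson process on $(0,1)$ with intensity $\frac{ds}{s}$, after matching constants; the intensity constant is fixed by the mean identity $\E W=1$. This equation is solved by $\mathrm{Exp}(1)$: the Laplace transform of $W^*$ is then
\[
\exp\Big(-\int_0^1 \big(1-\E e^{-\lambda sW}\big)\frac{ds}{s}\Big)=\frac{1}{1+\lambda},
\]
which should match $(1+\lambda)^{-2}$ after including the analogous contribution. To avoid proving uniqueness and the a priori induction separately, I would instead run Stein's method directly. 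I would bound $|\E[W f(W)] - \E[f'(W)]|$ via the size-bias coupling, with error terms from the truncation of scales, the local CLT, and the $o_n(1)$ in \eqref{asymp-unx}. Tightness and the second moment bound $\E[N_n^2]\lesssim \E[N_n]\log n$ come from the many-to-two formula, with $\sum_k P_{n-k}(\cdot)\sim \log n/n$.

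The main obstacle is the error control. We must show that the contribution of siblings with $n-k\le n^{\epsilon}$ or $n-k\ge n^{1-\epsilon}$ is $o(m_n)$ in $L^1$ under the size-biased measure. We must also show that, with high probability, at most a bounded number of sibling subtrees succeed, each behaving as a rescaled copy. The delicate point is the self-referential step: sub-subtrees at time $n-k$ need the theorem itself. I would try first to handle this by an induction on dyadic scales of $\log n$, using Stein bounds that are uniform in the starting point at distance $\le K\sqrt{n}$, and using the uniformity of \eqref{asymp-unx}.
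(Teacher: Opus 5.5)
\textbf{Verdict: there is a genuine gap.} Your reductions are fine, but the step that turns the heuristic into a proof is missing.

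\textbf{What is correct.} By \eqref{unxB} the conditional mean is $1+o_n(1)$. The size-biased law is the law of $Z_n(z_n+B)$ under $\Q^*(\cdot\mid S_{w_n}\in z_n+B)$. And $W^{sb}\Rightarrow\Gamma(2,1)$ together with $\E W\to1$ would indeed force $W\Rightarrow\mathrm{Exp}(1)$.

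\textbf{The cluster heuristic needs the right constant.} The intensity is not fixed by $\E W=1$; that identity does not determine it. It must be computed. Since $\E L_1=\E R_1=\sigma^2/2$ and $\frac{\sigma^2}{2}\cdot\frac{1}{\alpha}\cdot\frac{5}{8\pi}=1$, left and right siblings each produce successful clusters with intensity $\d s/s$. The total intensity is therefore $2\,\d s/s$, which gives exactly $(1+\lambda)^{-2}$.

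\textbf{The self-referential step is never closed.} Your description of $W^{sb}$ needs the conditional law of each successful cluster. That is the theorem itself at time $n-k$, from a random starting point. \emph{Induction on dyadic scales} is not yet an argument. Let $\bar\phi,\underline\phi$ denote the limsup and liminf of the Laplace transforms. Comparing them only gives $\underline\phi\ge T[\bar\phi]$ and $\bar\phi\le T[\underline\phi]$ for an order-reversing map $T$. You would still need a uniqueness or contraction argument for this system, and you do not supply one.

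\textbf{The Stein step does not rescue it.} The identity $\E[Wf(W)]=\E[f'(W)]$ is not the Stein identity of $\mathrm{Exp}(1)$: with $f(x)=x$ it would force $\E W^2=1$, whereas $\E Y^2=2$. The correct identity is $\E f'(W)=\E f(W)-f(0)$, i.e.\ $W^e\overset{d}{=}W$. Moreover, \eqref{Stein-key} requires an explicit coupling of $W$ with $W^e$ on one space, and you never construct one.

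\textbf{The missing idea: realize both $W$ and $W^e$ as right-sibling counts along a spine.} This removes all self-reference.
\begin{itemize}
\item For $W^e=UW^{sb}$: under $\Q^*(\cdot\mid S_{w_n}\in z_n+B)$, conditionally on $Z_n(z_n+B)$, the number $\mathsf{R}_n^{\succ w_n}(z_n+B)$ of target particles lexicographically to the right of the spine is uniform on $\{0,\dots,Z_n(z_n+B)-1\}$. Hence $(\mathsf{R}_n^{\succ w_n}(z_n+B)+U^*)/(\alpha|B|\log n)$ has the equilibrium law. This matches your heuristic: right siblings alone carry intensity $\d s/s$, whose cluster sum has Laplace transform $(1+\lambda)^{-1}$.
\item For $W$ itself: take the spine to be the \emph{leftmost} particle in the target. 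Then $W\overset{d}{=}(1+\mathsf{R}^*_n(z_n+B))/(\alpha|B|\log n)$, with the spine path tilted by $\{\mathsf{L}_n^{\prec w_n}(z_n+B)=0\}$.
\end{itemize}

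\textbf{How the coupling closes the argument.} The two spine laws are $o_n(1)$-close in total variation up to time $n-\lfloor n^\delta\rfloor$. The reason is that, given the path, the probability that no left sibling before that time hits the target converges in probability to the \emph{constant} $\delta$ (via Lemma \ref{lem: SRW}). So the tilt is asymptotically trivial there. Share the siblings before separation. The contributions to $\E|W^e-W|$ are then:
\begin{itemize}
\item $o_n(1)$ from early separation, using the second-moment bound;
\item $o_n(1)$ from generations $\le n-\lfloor n^\delta\rfloor$, where some left sibling hitting the target is rare;
\item $O(\delta)$ from the last $\lfloor n^\delta\rfloor$ generations.
\end{itemize}
Only Theorem \ref{thm: Sprobab} and \eqref{unxB} enter, never the Yaglom statement at smaller times.
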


When the dimension $d\geq 3$, for a fixed closed ball $B\subset \R^d$, Proposition 2.13 in Rapenne \cite{Rapenne} implies that $Z_n(\cdot)$ converges in law to some random point process supported on $B$ as $n\to \infty$.
For the continuum counterpart, the work of Lee \cite{Lee1991} previously mentioned contains such Yaglom-type results for CBBM in all dimensions. In particular, Theorem 2.4 of \cite{Lee1991} is the analog of our Theorem \ref{thm: Yaglom}. 

In the context of critical branching L\'evy process, the one-dimensional counterparts of Theorems \ref{thm: Sprobab} and \ref{thm: Yaglom} can be found in a recent work of Hou, Ren and Song~\cite{HRS}, see Theorems 1.4 and 1.6 therein.
If we consider a CBRW on $\R$ and replace the fixed target set by a growing half-line interval $(-\infty, x\sqrt{n}]$, Hong and Liang~\cite{HL} have recently shown a central limit theorem for $n^{-1}Z_n((-\infty, x\sqrt{n}])$ conditionally on $Z_n\geq 1$. When the one-dimensional branching random walk is subcritical, the analogous result was obtained by Hong and Yao~\cite{HY}.

Our proof of Theorem \ref{thm: Yaglom} relies on Stein's method for exponential distributions developed by Pek\"oz and R\"ollin in \cite{PR2011a}. In fact, as $n\to \infty$, the Wasserstein 1-distance between the conditional law of $\frac{Z_n(z_n+B)}{\alpha |B| \log n}$ given $Z_n(z_n+B) \ge 1$ and the exponential distribution $\mathrm{Exp}(1)$ converges to zero.
To be more precise, we will prove that, uniformly for $z_n\in\{z\in\Z^2\colon \|z\|\le K \sqrt{n}\}$ with some fixed $K>0$, we have
\begin{equation}
\label{dW-uniformcvg}
\lim_{n\to \infty} \sup_{f\in\mathcal{F}_W}\Bigg\vert \E\bigg[f\Big(\frac{Z_n(z_n+B)}{\alpha |B| \log n} \Big)\bigg\vert Z_n(z_n+B) \ge 1 \bigg]- \int_0^\infty e^{-t} f(t) \d t\Bigg\vert = 0.
\end{equation}
where $\mathcal{F}_W:=\{f\colon \R\to\R \,\vert\,  f\textrm{ is Lipschitz continuous with Lipschitz constant $\le 1$} \}$. This is sufficient to conclude the weak convergence stated in Theorem \ref{thm: Yaglom}. We emphasize that the uniformity in the estimate \eqref{asymp-unx} and in the convergence \eqref{dW-uniformcvg} will play a crucial role in the proof of the following Theorems \ref{thm: Typical} and \ref{thm: Span}.

Stein's method for exponential approximations was first used by Pek\"oz and R\"ollin \cite{PR2011a} to obtain a rate of convergence in the Wasserstein distance for the classical Yaglom theorem. 
The same framework was applied to the nearly critical BGW process in Pek\"oz and R\"ollin \cite{PR2011b}, and similar results for critical BGW process in a varying environment can be found in Cardona-Tob\'on, Jaramillo and Palau \cite{CJP}. Another application is the Yaglom theorem for critical BRW in Bernoulli site percolation environment in the recent work of Chen, Gu and Zhao \cite{CGZ2026}.
For CBRW occupation statistics in the current setting, if the offspring law has finite higher moments, Pek\"oz, R\"ollin and Ross have obtained in \cite{PRR20} a rate of convergence for \eqref{eq:Yaglom-high-dim} (without considering $R_n$) and even a second order fluctuation result when the dimension $d\geq 7$.

By Theorem \ref{thm: Yaglom}, if a site is occupied at time $n$, we expect to find there roughly $\log n$ particles at the same time. 
Now let us move to the location $S_{u(n)}\in \Z^2$ of a typical particle $u(n)$ at time $n$. Recall that $T_n = Z_n(S_{u(n)})$. 
The following theorem confirms the conjecture of Lalley and Zheng stated after Theorem 6 in \cite{LZ11}, by showing that the conditional distribution of $T_n/\log n$ given $\{Z_n\ge 1\}$ converges in law to the Gamma distribution $\Gamma(2,1)$.

\begin{thm}\label{thm: Typical}
Assume \eqref{hyp} and let $\alpha = \frac{5\sigma^2}{16\pi}$. Then, under $\P(\cdot\,\vert Z_n\ge1)$, we have the joint convergence in law
\[
\Big(\frac{T_n}{\alpha \log n}, \frac{Z_n}{\sigma^2n/2}\Big) \xrightarrow[n\to\infty]{\mathrm{(d)}} (X,Y),
\]
where in the limit, $X$ is a $\Gamma(2,1)$-distributed random variable, and $Y$ is an $\mathrm{Exp}(1)$-distributed random variable. Moreover, $X$ and $Y$ are independent. 
\end{thm}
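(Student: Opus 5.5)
We reduce the joint law to a size-biased computation over sites. For bounded Lipschitz $f,g$ with compact support away from issues at $0$, conditionally on the tree the typical particle sits at $x$ with probability $Z_n(x)/Z_n$. Hence
\[
\E\Big[f\Big(\tfrac{T_n}{\alpha\log n}\Big)g\Big(\tfrac{Z_n}{\sigma^2 n/2}\Big)\ind{Z_n\ge1}\Big]=\sum_{x\in\Z^2}\E\Big[\tfrac{Z_n(x)}{Z_n}f\Big(\tfrac{Z_n(x)}{\alpha\log n}\Big)g\Big(\tfrac{Z_n}{\sigma^2 n/2}\Big)\Big].
\]
We then divide by $\P(Z_n\ge1)\sim 2/(\sigma^2 n)$.

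The idea is that on $\{Z_n(x)\ge1\}$ two things happen asymptotically. First, $Z_n(x)/(\alpha\log n)$ is approximately $\mathrm{Exp}(1)$ by Theorem~\ref{thm: Yaglom}. Second, the global quantity $Z_n$ is asymptotically independent of it, with the law of $Z_n/(\sigma^2n/2)$ given local survival being $\Gamma(2,1)$, as announced in the abstract. We write $h(t,s)=f(t)g(s)/s$, which is fine after truncating $s\ge\eta$. Then the summand is approximately
\[
\tfrac{2\alpha\log n}{\sigma^2 n}\,u_n(x)\,\E\Big[\tfrac{Z_n(x)}{\alpha\log n}\,h\Big(\tfrac{Z_n(x)}{\alpha\log n},\tfrac{Z_n}{\sigma^2n/2}\Big)\,\Big|\,Z_n(x)\ge1\Big].
\]
Theorem~\ref{thm: Sprobab} gives $\sum_x u_n(x)\alpha\log n=\sum_x P_n(x)+o(1)=1+o(1)$, uniformly. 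Only $\|x\|\le K\sqrt n$ matters, since the remaining mass of $P_n$ is small and the bound $\E[Z_n(x)/Z_n\,;\cdot]\le$ the corresponding expected mass can be controlled. The limit then becomes
\[
\tfrac{2}{\sigma^2}\cdot\tfrac{\sigma^2}{2}\,\E\big[E\,f(E)\big]\,\E\big[g(G)/G\big],\qquad E\sim\mathrm{Exp}(1),\ G\sim\Gamma(2,1).
\]
Here $\E[Ef(E)]=\E[f(X)]$ with $X\sim\Gamma(2,1)$, and $\E[g(G)/G]=\E[g(Y)]$ with $Y\sim\mathrm{Exp}(1)$. This gives exactly the claimed independent product. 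The size-biasing by $Z_n(x)$ turns $\mathrm{Exp}$ into $\Gamma(2,1)$, and the division by $Z_n$ turns $\Gamma(2,1)$ back into $\mathrm{Exp}(1)$. Uniform integrability of $Z_n(x)/\log n$ under the conditioning comes from the $W_1$-convergence \eqref{dW-uniformcvg}, which controls first moments uniformly in $\|x\|\le K\sqrt n$.

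The key steps, in order, are the following.

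1. Establish the identity above and truncate to $\|x\|\le K\sqrt n$ and $Z_n\ge\eta n$. The truncation $Z_n\ge\eta n$ uses that $\E[Z_n(x)/Z_n;\,Z_n<\eta n]$ summed over $x$ equals $\P(0<Z_n<\eta n)=O(\eta/n)$ by Yaglom.

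2. Prove the joint statement: under $\P(\cdot\,|\,Z_n(x)\ge1)$, the pair $\big(Z_n(x)/(\alpha\log n),\,Z_n/(\sigma^2n/2)\big)$ converges to $(E,G)$ with $E,G$ independent, uniformly in $\|x\|\le K\sqrt n$.

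3. Plug in Theorem~\ref{thm: Sprobab} and sum over $x$.

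The main obstacle is step 2. Our plan there is a spine decomposition along the leftmost particle reaching $x$ at time $n$, following Zhu's idea as used for Theorem~\ref{thm: Sprobab}. Conditional on local survival, the spine carries size-biased offspring. The spine's bushes grafted at times $k\le n-n/\log n$, say, contribute to $Z_n$ a sum of order $n$ which converges to $\Gamma(2,1)$: a size-biased spine yields a sum of independent critical BGW contributions, exactly as in the classical size-biased Yaglom argument. However, these early bushes essentially never reach $x$ at time $n$, because their contribution to $Z_n(x)$ has expectation $\sum_{k}P_{n-k}(\cdot)$, which over that range is of lower order than $\log n$ except near the end. Conversely, $Z_n(x)$ is of order $\log n$ and is produced by bushes grafted in the last $n^{1-\delta}$ steps (the $\log n$ arises from $\sum_{j\le n}\frac1j$ over recent times). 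These bushes have total mass $o(n)$. So $Z_n(x)$ and $Z_n$ are asymptotically measurable with respect to disjoint, nearly independent parts of the spine, which gives independence.

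The delicate point is that the Stein argument behind Theorem~\ref{thm: Yaglom} is stated for the full process. We would need to check that replacing the early part of the spine changes $Z_n(x)/\log n$ only by $o(1)$ in $L^1$, using the first-moment bound $\E[Z_n(x)]=P_n(x)$ and the sum $\sum_{j\ge n^{1-\delta}}1/j$, whose contribution is $\delta\log n$. Taking $\delta\to0$ after $n\to\infty$ would close the argument.
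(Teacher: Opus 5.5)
Your route is valid in outline but organized differently from the paper. The paper needs no joint local statement for this theorem. It writes the joint Laplace transform under $\Q^*$ (size-biasing by $Z_n$, spine uniform at generation $n$), so that $T_n$ becomes $Z_n(S_{w_n})$ and a weight $n/Z_n$ appears. It then splits the spine at time $n-n/\log n$. Under $\Q^*$, the late local count $Z_{(n-\frac{n}{\log n},n],n}(S_{w_n})$ and the early total count $Z_{[1,n-\frac{n}{\log n}],n}$ are \emph{exactly} independent, since total progeny of bushes does not depend on spatial positions. The two factors are identified separately:
\begin{itemize}
\item $\E_{\Q^*}[e^{-\lambda Z_n(S_{w_n})/(\alpha\log n)}]=\sum_z\E[Z_n(z)e^{-\lambda Z_n(z)/(\alpha\log n)}]\to(1+\lambda)^{-2}$, by Theorem~\ref{thm: Sprobab} and the marginal bound \eqref{dW-uniformcvg};
\item the $n/Z_n$-weighted factor, by Kolmogorov and Yaglom.
\end{itemize}
Your disintegration over sites instead requires the joint local statement in step 2, which is essentially Theorem~\ref{thm: Span} uniformly in $\|x\|\le K\sqrt n$. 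What it buys is that both theorems come out of one argument, with Theorem~\ref{thm: Typical} a corollary. Your bookkeeping is correct: the truncations in $\eta$ and $K$, uniform integrability from $W_1$, and the identities $\E[Ef(E)]=\E[f(X)]$ and $\E[g(G)/G]=\E[g(Y)]$.

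The step you must still supply is the independence in step 2. With the leftmost-particle spine, $\P(\cdot\,|\,Z_n(x)\ge1)=\Q^*(\cdot\,|\,w_n=\ell_n(x))$. Under this law the left bushes are conditioned not to hit $x$, and the sibling numbers are tilted by $(1-u_{n-k}\ast\nu(x-S_{w_{k-1}}))^{L(w_k)}$. Both depend on the spine path, which also governs the late part, so ``nearly independent parts of the spine'' is not automatic. Two facts make it work:
\begin{itemize}
\item the early total count is a function of the tree only, not of positions;
\item given the path, the early part $\bigcap_{k\le m}\{\mathsf{L}_{n,k}(x)=0\}$ of the conditioning event has $\Q^*$-probability at least $\prod_{j\ge n-m}\bigl(1-\frac{\sigma^2}{2}\sup_z u_j(z)\bigr)\ge\exp\bigl(-C\sum_{j\ge n-m}\frac{1}{j\log j}\bigr)$ by \eqref{upbdu}. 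This is $1-O(\delta)$ for $m=n-n^{1-\delta}$, and $1-o(1)$ for $m=n-n/\log n$.
\end{itemize}
Hence, up to $O(\delta)$ in total variation, the early total count is independent of the path and the late bushes, and has its unconditioned $\Q^*$ law. That law gives $\Gamma(2,1)$ after normalization, by size-biased Yaglom.

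A simpler option is to size-bias by $Z_n(x)$ instead: take the spine uniform among particles at $x$, i.e.\ $\Q^*(\cdot\,|\,S_{w_n}=x)$ with weight $\alpha\log n/Z_n(x)$. Then all bushes are unconditioned and the independence is exact; this is how the paper proves Theorem~\ref{thm: Span}. Combined with your step-1 identity, the sum over $x$ then collapses back to the paper's single $\Q^*$ computation.

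A minor point: $\sum_x\alpha\log n\,u_n(x)=1+o(1)$ does not follow from summing the uniform error in \eqref{asymp-unx} over all $x$. You only use it over $\|x\|\le K\sqrt n$, where the $O(K^2n)$ sites each contribute $o(1)/n$, so this is fine.
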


Recall that the probability density function of $X$ is $xe^{-x}\ind{x>0}$, showing that the Gamma distribution $\Gamma(2,1)$ is the size-biased distribution of $\mathrm{Exp}(1)$.
It is interesting to note that the sum of two independent $\mathrm{Exp}(1)$-distributed random variables also follows the Gamma distribution $\Gamma(2,1)$. 

As we will see in the proof of Theorem \ref{thm: Typical}, if we look at the spine formed by the ancestral line of the chosen particle $u(n)$, the main contribution to $T_n$ comes from the descendants of the individual on the spine that is at generation $n-n/\log n$, whereas for $Z_n$ it suffices to count the progeny of all particles on the spine at an earlier stage. This decorrelation explains the asymptotic independence between $T_n$ and $Z_n$ under the conditional law $\P(\cdot\,\vert Z_n\ge1)$.

Our last theorem concerns the behavior of $Z_n$ conditionally on local survival.

\begin{thm}\label{thm: Span}
Assume \eqref{hyp} and let $\alpha = \frac{5\sigma^2}{16\pi}$. Fix some real number $K>0$ and a starting point $x\in \Z^2$. For any sequence $(z_n)_{n\geq 1}$ in $\Z^2$ satisfying $\|z_n\|\leq K\sqrt{n}$, under $\P_x(\cdot\,\vert Z_n(z_n)\ge1)$, we have the joint convergence in law
\begin{equation}
  \label{eq:Span}  
  \Big(\frac{Z_n}{\sigma^2 n/2}, \frac{Z_n(z_n)}{\alpha \log n}\Big) \xrightarrow[n\to\infty]{\mathrm{(d)}} (X,Y),
\end{equation}
where in the limit, $X$ is a $\Gamma(2,1)$-distributed random variable, and $Y$ is an $\mathrm{Exp}(1)$-distributed random variable. Moreover, $X$ and $Y$ are independent. 
\end{thm}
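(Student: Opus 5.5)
We prove Theorem \ref{thm: Span} by a spinal decomposition along the leftmost particle reaching $z_n$ at time $n$, combined with Theorems \ref{thm: Sprobab} and \ref{thm: Yaglom} (in the uniform form \eqref{dW-uniformcvg}) applied at the end of the spine. By translation invariance we take $x=0$, with $\|z_n\|\le K\sqrt n$. The key quantity is the joint Laplace transform
\[
\E\big[e^{-\lambda Z_n/(\sigma^2 n/2)-\theta Z_n(z_n)/(\alpha\log n)};\,Z_n(z_n)\ge1\big].
\]
After dividing by $u_n(z_n)$, it should converge to $(1+\lambda)^{-2}(1+\theta)^{-1}$, the transform of $\Gamma(2,1)\otimes\mathrm{Exp}(1)$.

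First we decompose on the leftmost (lexicographic) particle $w$ with $|w|=n$ and $S_w=z_n$. Summing over $w$ with the many-to-one formula, the event that $w$ is leftmost forces every subtree hanging to the left of the spine to avoid $z_n$ at time $n$. Subtrees hanging to the right are unconstrained. The spine is a lazy random walk bridge from $0$ to $z_n$ in $n$ steps, and the weight $P_n(z_n)$ cancels against $u_n(z_n)\sim P_n(z_n)/(\alpha\log n)$ from Theorem \ref{thm: Sprobab}. Each spine vertex at generation $k$ carries a size-biased number of children. Its siblings start independent CBRWs from the spine position for $n-k$ more steps.

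Second, we split the spine at $k_n=n-n/\log n$ and separate the contributions.
\begin{itemize}
\item Subtrees rooted at $k\le n-\epsilon n$ contribute to $Z_n$ on scale $n$. Their total is a sum over the spine of independent BGW populations. The Laplace functional of such a sum is $\exp\big(-\sum_k \sigma^2\,\P(Z_{n-k}\ge1)\,\E[1-e^{-\lambda Z_{n-k}/(\sigma^2n/2)}\mid Z_{n-k}\ge 1]\big)$, which by Kolmogorov and Yaglom converges to
\[
\exp\Big(-\int_0^1 \frac{2\lambda}{1+\lambda(1-s)}\,ds\Big)=(1+\lambda)^{-2}.
\]
This is the standard size-biased computation producing $\Gamma(2,1)$.
\item The left constraint costs only a factor $1+o(1)$ for these subtrees. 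The probability that a subtree rooted at $k\le k_n$ hits $z_n$ is summable to $O\big(\sum_k 1/((n-k)\log(n-k))\cdot\text{bridge density}\big)$, and this is small away from the top. This step needs the uniformity in Theorem \ref{thm: Sprobab}.
\item Subtrees rooted after $n-\epsilon n$ have total size $O(\epsilon n)$ in expectation, so they are negligible for $Z_n/n$ once $\epsilon\to0$.
\item $Z_n(z_n)$ is essentially determined by the last $n/\log n$ (or $o(n)$) generations of the spine. Its conditional law given that part is governed by the local Yaglom behaviour. We take the spine position at time $n-m$ with $m=o(n)$ but $\log m\sim\log n$. Conditioning on it and applying \eqref{dW-uniformcvg} (uniform over starting points within $K\sqrt m$ of $z_n$, after Markov at $n-m$) gives $Z_n(z_n)/(\alpha\log n)\to\mathrm{Exp}(1)$, independently of the early part.
\end{itemize}

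Concretely, it is cleaner to avoid redoing the Stein argument and condition at time $n-m$ instead. Write $\mathcal F_{n-m}$ for the generation-$(n-m)$ configuration. Then $Z_n(z_n)$ is a sum over particles $v$ at time $n-m$ of independent copies with laws $\P_{S_v}$. Given $\mathcal F_{n-m}$, local survival at $z_n$ is rare and essentially comes from one particle. So
\[
\P(Z_n(z_n)\ge1\mid\mathcal F_{n-m})\approx\sum_{v} u_m(z_n-S_v),
\]
and on this event the count is asymptotically $\mathrm{Exp}(1)\cdot\alpha\log m$ by Theorem \ref{thm: Yaglom}. Meanwhile $Z_n=Z_{n-m}+o(n)$ in probability.

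It then remains to show that, under the tilt by $\sum_v u_m(z_n-S_v)\approx\sum_v P_m(z_n-S_v)/(\alpha\log n)$, the law of $Z_{n-m}/(\sigma^2n/2)$ becomes $\Gamma(2,1)$. This is a many-to-one computation: the tilt is one spine ending near $z_n$, and the previous bullet computation applies. Two points need care: the error from two distinct particles both reaching $z_n$ must be negligible, which follows from a second-moment bound of order $1/(n\log^2 n)$ relative to $u_n$; and the uniform local Yaglom law must hold for starting points at distance $O(\sqrt m)$, which follows from Theorem \ref{thm: Yaglom} with $K$ fixed, since the bridge keeps the time-$(n-m)$ particle within $K\sqrt m$ with high probability. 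The main obstacle is making this decoupling rigorous, in particular controlling $u_m$ uniformly against the heat-kernel weights, since the dominating particles near $z_n$ could in principle correlate with $Z_{n-m}$. I expect the uniform $o(1/(n\log n))$ error in \eqref{asymp-unx} to be exactly what is needed there.
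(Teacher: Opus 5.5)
Your plan is viable, but it takes a genuinely different route from the paper's.

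\textbf{What the paper does.} It uses neither the leftmost particle nor conditioning at an intermediate generation. After truncating to $\{Z_n(z_n)\ge\varepsilon\log n\}$ (via Theorem~\ref{thm: Yaglom}), it size-biases by $Z_n(z_n)$, i.e.\ it marks a uniformly chosen particle at $z_n$, and writes the Laplace transform as
\[
\frac{1}{P_n(z_n)}\E_{\Q^*}\Big[\ind{S_{w_n}=z_n}\,e^{-\lambda Z_n/(\sigma^2 n/2)}\,\frac{\alpha\log n}{Z_n(z_n)}\,e^{-\xi Z_n(z_n)/(\alpha\log n)}\,\ind{Z_n(z_n)\ge\varepsilon\log n}\Big].
\]
So no left-avoidance constraints ever appear. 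Two first-moment bounds along the spine, split at $n-n/\log n$, give $Z_n\approx 1+Z_{[1,n-\frac{n}{\log n}],n}$ and $Z_n(z_n)\approx 1+Z_{(n-\frac{n}{\log n},n],n}(z_n)$. The first quantity is purely genealogical, hence exactly independent under $\Q^*$ of the spine path and of the late subtrees. Each factor is then un-tilted by change of measure. On one side, $\E_{\Q^*}[e^{-\lambda Z_n/(\sigma^2n/2)}]=\E[Z_n e^{-\lambda Z_n/(\sigma^2n/2)}]\to(1+\lambda)^{-2}$ by Kolmogorov--Yaglom. The other factor becomes $\frac{\alpha\log n}{P_n(z_n)}\E[e^{-\xi Z_n(z_n)/(\alpha\log n)}\ind{Z_n(z_n)\ge\varepsilon\log n}]$. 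Thus Theorems~\ref{thm: Sprobab} and~\ref{thm: Yaglom} are used only at $z_n$ from the origin; the truncation is the price of the weight $1/Z_n(z_n)$.

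\textbf{What your route buys and costs.} Conditioning on $\mathcal F_{n-m}$ makes the mechanism more transparent: local survival comes from one ancestor at time $n-m$, and tilting by it size-biases $Z_{n-m}$. But it needs three estimates the paper avoids.
\begin{enumerate}
\item The two-hit bound. It is really $O\big(\log(n/m)/(n\log^2 n)\big)$, not $O(1/(n\log^2 n))$, so you need $\log(n/m)=o(\log n)$.
\item A bound showing that the hitting subtree has total size $o(n)$. Via $\E[Z_mZ_m(x)]\lesssim mP_m(x)$ this needs $m=o(n/\log n)$, e.g.\ $m=n/(\log n)^2$.
\item Control of $u_m(z_n-y)$ for $\|z_n-y\|\gg\sqrt m$. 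Here your expectation is off: the additive error $o(1)/(m\log m)$ of \eqref{asymp-unx}, summed over the $\asymp m^2$ relevant sites, is far too crude. Use \eqref{upbduv2} instead; it also yields $\sum_x u_m(x)=O(1/\log m)$, which the two-hit bound needs.
\end{enumerate}

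\textbf{A shortcut within your route.} You can bypass both Theorem~\ref{thm: Yaglom} at intermediate starting points and the correlation you worry about. Under $\Q^*$ the spine's motion is independent of the genealogy, so many-to-one gives exactly
\[
\E\Big[\sum_{|v|=n-m}G(S_v)\,F(Z_{n-m})\Big]=\mathbb{E}\big[G(S_{n-m})\big]\,\E\big[Z_{n-m}F(Z_{n-m})\big].
\]
Take $G(y)=\E_y\big[e^{-\theta Z_m(z_n)/(\alpha\log n)}\ind{Z_m(z_n)\ge1}\big]$. Then the first factor equals $\E\big[e^{-\theta Z_n(z_n)/(\alpha\log n)}\ind{Z_n(z_n)\ge1}\big]$ up to the two-hit error, and Theorem~\ref{thm: Yaglom} applies to it directly. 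This is the same independence the paper exploits.

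\textbf{On your leftmost-particle formulation.} The spine there is not a bridge. It is tilted by the left-avoidance weights, which are not negligible in the last generations; this tilt is exactly what \eqref{bdDTVspine} controls in Section~\ref{sec:yaglom_theorem_proof}. The Markov property at time $n-m$ still rescues that version: by \eqref{upbdu}, the early avoidance factor is $1-o(1)$ uniformly when $\log m\sim\log n$.
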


The marginal convergence of the second component in \eqref{eq:Span} is obviously a simple consequence of Theorem \ref{thm: Yaglom}. 
After comparison with the classical Yaglom theorem, we observe that, from an asymptotic point of view, the rescaled number of particles $\frac{Z_n}{\sigma^2 n/2}$ under the local survival conditioning $\{Z_n(z_n)\ge1\}$ stochastically dominates itself under the global survival conditioning $\{Z_n\ge1\}$. However, we do not have an intuitive explanation for this phenomenon.

Our proof of Theorem \ref{thm: Span} shares the same strategy with Theorem \ref{thm: Typical}. The independence between $X$ and $Y$ in the limit can be established in a similar way.

The paper is organized as follows. 
Section \ref{sec:preliminaries} presents some preliminary results on 2-dimensional random walk and recalls the spinal decomposition for CBRW.
In Section \ref{sec:local_survival_probability}, we prove Theorem \ref{thm: Sprobab}. 
In Section \ref{sec:yaglom_theorem_proof}, we apply Stein's method to establish Theorem \ref{thm: Yaglom}. In particular, to control the Wasserstein distance, we construct in Section \ref{sub:coupling} an appropriate coupling of two branching random walks with spine.
Sections \ref{sec:lalley_zheng_conjecture_proof} and \ref{sec:proof_of_theorem_ref_thm_span} are devoted to Theorems \ref{thm: Typical} and \ref{thm: Span} respectively, where we prove the weak convergence of the random vector by identifying the limit of the joint Laplace transform. 

\section*{Acknowledgments}
T.~B.~is supported by the National Natural Science Foundation of China (Grant No.12595284, 12595280, 12501195).

\noindent X.~C.~is supported by National Key R\&D Program of China (No.~2022YFA1006500).
This work was supported by the National Natural Science Foundation of China
(Grant No.~12571148) and by the Fundamental Research Funds for the Central Universities (No.~310432104).

\section{Preliminaries} 
\label{sec:preliminaries}

\subsection{Preliminary results on 2-dimensional random walk}

In this subsection, we give a preliminary lemma for the 2-dimensional lazy simple random walk. 
Let $(S_n)_{n\ge0}$ under $\mathbb{P}_x$ be a lazy simple random walk on $\Z^2$ started from $x$. We write directly $\mathbb{P}=\mathbb{P}_0$ for simplicity.
Recall that $P_n(x)=\mathbb{P}_0(S_n=x)$. 
It is well-known that there exists some constant $C_\eqref{upbdPnx}>0$ such that for all $n\geq 1$,
\begin{equation}
\label{upbdPnx}
\sup_{x\in\Z^2} P_n(x) \le \frac{C_\eqref{upbdPnx}}{n}.
\end{equation}
Consider 
\[
\Gamma_k^n:= \sum_{j=k}^n P_j(S_j), \quad\forall 1\le k\le n.
\]
It has been established in \cite[Lemma 27]{LZ11} that as $n\to \infty$,
\begin{equation}
\mathbb{E}[\Gamma_2^n]\sim \frac{5}{8\pi}\log n \quad \textrm{ and } \quad \V_{\mathbb{P}}\Big(\frac{\Gamma_2^n}{\log n}\Big)=o_n(1),
\end{equation}
This immediately implies the convergence in probability
\begin{equation}
\label{eq:cv-proba-Gamma}
\frac{\Gamma_2^n}{\frac{5}{8\pi}\log n}\xrightarrow[n\to\infty]{\textrm{ in } \mathbb{P}} 1.
\end{equation}

The following lemma gives a stronger version for this convergence in probability, which will be useful in the proof of Theorems \ref{thm: Sprobab} and \ref{thm: Yaglom}. Given $A\subset\Z^2$, for $1\leq k\leq n$ we define 
\[
\Gamma_k^n(A):=\sum_{j=k}^n P_j(A+ S_j).
\]
where $P_j(A+z):=\sum_{y\in A}P_j(y+z)$. If $A=\{z\}$ contains a single point, we write $\Gamma_k^n(z)=\Gamma_k^n(\{z\})$.

\begin{lem}
\label{lem: SRW}
We fix an integer $K\ge 2$, and consider a sequence $(z_n)_{n\geq 1}\subset\Z^2$ such that $\frac{\log (1+\|z_n\|)}{\log n} \to 0$ as $n\to \infty$.
For any $\varepsilon>0$ and for any $p\ge 2$, we have
\begin{equation}\label{eq: SRWcvg}
\mathbb{P}\left( \Big|\Gamma_{K}^n(z_n) -\frac{5}{8\pi}\log n\Big| \ge \varepsilon \log n\right) = \frac{o_n(1)}{(\log n)^p},
\end{equation}
and for any fixed bounded set $B\subset \Z^2$, we also have
\begin{equation*}
\mathbb{P}\left( \Big|\Gamma_{K}^n(z_n+B) -\frac{5}{8\pi}|B|\log n\Big| \ge \varepsilon \log n\right) = \frac{o_n(1)}{(\log n)^p}.
\end{equation*}
Moreover, these results still hold if we replace the fixed integer $K$ by $\lfloor \log n\rfloor+1$.
\end{lem}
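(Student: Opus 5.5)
Our plan is to get a polynomial-in-$\log n$ tail bound from a high-moment estimate, using a multiscale (dyadic) decomposition of $\Gamma_K^n(z_n)$ into nearly independent blocks. The variance bound of \cite[Lemma 27]{LZ11} only gives $o_n(1)$, so we need more.

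\textbf{Step 1: reduce to the dominant range.} Write $\Gamma_K^n(z_n)=\sum_{j=K}^{n}P_j(z_n+S_j)$.
\begin{itemize}
\item By \eqref{upbdPnx}, the terms with $j\le J_n:=\exp((\log n)^{1/2})$ contribute at most $C\log J_n=O((\log n)^{1/2})=o(\log n)$ deterministically.
\item For $j\ge J_n$ we have $\|z_n\|\le n^{o(1)}$. By \eqref{Pnx}, $P_j(z_n+S_j)=p_j(z_n+S_j)+O(j^{-2})$, and the error terms sum to $O(1)$.
\item The same reasoning handles $K=\lfloor\log n\rfloor+1$, since the dropped initial terms cost only $O(\log\log n)$.
\end{itemize}
So it suffices to study $G_n:=\sum_{J_n\le j\le n}p_j(z_n+S_j)$.

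\textbf{Step 2: dyadic blocks.} Split $[J_n,n]$ into blocks $I_k=[2^k,2^{k+1})$, and set
\[
Y_k:=\sum_{j\in I_k}p_j(z_n+S_j).
\]
Each $Y_k$ is bounded by a constant (roughly $\sum_{j\in I_k}C/j\le C\log 2$). Since $\|z_n\|\ll 2^{k/2}$ in the relevant range, Brownian scaling gives that $Y_k$ has law close to that of
\[
\int_1^2\frac{5}{4\pi t}\,e^{-5\|W_t\|^2/(4t)}\,dt
\]
with $W$ a planar Brownian motion of covariance $\tfrac25 I$. Its mean is
\[
\int_1^2\frac{5}{4\pi t}\cdot\frac12\,dt=\frac{5}{8\pi}\log 2,
\]
because $\E[p_j(S_j)]\approx \sum_y p_j(y)^2\approx \frac{5}{8\pi j}$. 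Summing over the $\approx\log_2 n$ blocks therefore gives the correct mean $\frac{5}{8\pi}\log n$.

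\textbf{Step 3: decorrelation across blocks.} For $k'\ge k+m$, condition on $\mathcal F_{2^{k+1}}$. Given $S_{2^{k+1}}=y$ with $\|y\|\lesssim 2^{k/2}\cdot\text{polylog}$, the conditional mean of $Y_{k'}$ differs from its unconditional mean by $O(2^{-(k'-k)/2})$ or so. This follows from the local CLT and the smoothness of the Gaussian kernel under a shift of size $\ll 2^{k'/2}$. Atypically large $\|S_{2^{k+1}}\|$ has stretched-exponentially small probability and can be discarded.

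Now form the martingale differences
\[
D_k:=\E[G_n\mid\mathcal F_{2^{k+1}}]-\E[G_n\mid\mathcal F_{2^{k}}].
\]
Each $D_k$ is uniformly bounded: it equals $Y_k$-fluctuation plus $\sum_{k'>k}O(2^{-(k'-k)/2})$. Burkholder's (or Azuma's) inequality then gives
\[
\E\big|G_n-\E G_n\big|^{q}\le C_q(\log n)^{q/2}\quad\text{for every } q.
\]
By Markov's inequality with $q>2p$,
\[
\mathbb P\big(|G_n-\E G_n|\ge\varepsilon\log n\big)\le C_q\,\varepsilon^{-q}(\log n)^{-q/2}=o\big((\log n)^{-p}\big).
\]
Azuma would in fact give a stretched-exponential bound. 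Combined with $\E G_n=\frac{5}{8\pi}\log n+o(\log n)$, this proves \eqref{eq: SRWcvg}.

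\textbf{Step 4: the set version.} For bounded $B$, apply the same argument to each $z_n+b$, $b\in B$, which still satisfies the hypothesis, and take a union bound over the finitely many $b$. Note $\Gamma_K^n(z_n+B)=\sum_{b\in B}\Gamma_K^n(z_n+b)$.

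\textbf{Main obstacle.} The hardest part is Step 3: establishing the uniform $O(1)$ bound on the martingale increments $D_k$. This needs quantitative control of $\E[p_j(z_n+y+S_{j-i})]-\E[p_j(z_n+S_j)]$ that is summable over future scales, uniformly in $y$ in the typical range. I would handle it by writing
\[
\E[p_j(x+S_{j-i})]\approx\sum_w p_j(x+w)\,p_{j-i}(w),
\]
which is an explicit Gaussian convolution, and then bounding its gradient in $x$ by $C/(j\sqrt{j})$ per unit shift.
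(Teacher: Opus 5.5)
Your argument is correct in substance, but it takes a genuinely different route from the paper.

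\textbf{The paper's route.} After the same Gaussian replacement and after discarding $j\le n^{\epsilon_0}$ (at cost at most $\frac{\varepsilon}{2}\log n$), the paper uses the Koml\'os--Major--Tusn\'ady coupling. This replaces the sum by $\int p_s(B_s)\,\mathrm{d}s$ on an event of probability $1-O(1/n)$, with $z_n$ absorbed into the coupling error. The time change $X_u=e^{-u/2}B_{e^u}$ then turns $\int_1^{e^N}p_s(B_s)\,\mathrm{d}s$ into a sum of $N$ terms of an exactly stationary, bounded sequence driven by a stationary Ornstein--Uhlenbeck process, with explicit mean $\frac{1}{4\pi}$ per term. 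The polynomial tail comes from a moment inequality for such sums, adapted from \cite{BW22}.

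\textbf{Your route.} You stay on the lattice and build a Doob martingale along dyadic scales. Decorrelation comes from a Lipschitz bound on the heat kernel, and concentration from Burkholder's inequality.

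\textbf{What each buys.} Your argument is self-contained: no strong approximation and no external mixing result. Your moment bound $\mathbb{E}|G_n-\mathbb{E}G_n|^q\le C_q(\log n)^{q/2}$, together with $\mathbb{E}G_n=\frac{5}{8\pi}\log n+O(\sqrt{\log n})$ when $z_n=0$, also yields Lemma \ref{lem: SRW-sqrt}. The paper's route gains exact stationarity, which makes the mean computation and the block structure automatic.

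Your ``main obstacle'' is easier than you expect. Since $\sup_x\|\nabla p_j(x)\|\lesssim j^{-3/2}$, and averaging over the law of $S_{j-i}$ cannot increase a Lipschitz constant, the map $y\mapsto\mathbb{E}[p_j(z_n+y+S_{j-i})]$ is $Cj^{-3/2}$-Lipschitz uniformly in $z_n$. No local limit theorem is needed for this step.

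Two claims need correcting, though neither is serious.

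First, $D_k$ is not uniformly bounded. Write $\Delta_k=S_{2^{k+1}}-S_{2^k}$. The contribution of the future blocks to $D_k$ is at most $\sum_{k'>k}\min\{C,\,C2^{-(k'-k)/2}(1+\|\Delta_k\|2^{-k/2})\}\lesssim 1+\log(1+\|\Delta_k\|2^{-k/2})$, and this can be of order $k$. However, $\|\Delta_k\|2^{-k/2}$ has Gaussian tails, so $\sup_k\mathbb{E}|D_k|^q<\infty$ for every $q$. That is all Burkholder's inequality requires. Azuma's inequality is only available after a truncation.

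Second, $J_n=\exp(\sqrt{\log n})$ need not dominate $\|z_n\|^2$; take for instance $\|z_n\|=n^{1/\log\log n}$. So ``$\|z_n\|\ll 2^{k/2}$'' fails on the first $O(\log(1+\|z_n\|))=o(\log n)$ blocks. This only affects the computation of $\mathbb{E}G_n$, and since each block is bounded by a constant, those blocks contribute $o(\log n)$ deterministically.
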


In order to establish \eqref{asymp-unx-strong}, we will need the following technical refinement.

\begin{lem}
\label{lem: SRW-sqrt}
For any $\varepsilon\in(0,1/2)$, we have
\begin{equation}
\label{eq:SRWcvg-sqrt}
\mathbb{P}\left( \Big|\Gamma_{\lfloor \log n\rfloor +1}^n(0) -\frac{5}{8\pi}\log n\Big| \ge (\log n)^{\frac12+\varepsilon}\right) = \frac{O(1)}{(\log n)^{3/2-\varepsilon}}.
\end{equation}
\end{lem}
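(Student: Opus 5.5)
We prove \eqref{eq:SRWcvg-sqrt} by a second-moment (Chebyshev) bound after centering. Write $\Gamma:=\Gamma_{\lfloor \log n\rfloor+1}^n(0)=\sum_{j=\lfloor\log n\rfloor+1}^n P_j(S_j)$. By Chebyshev it suffices to show
\[
\bigl|\mathbb{E}[\Gamma]-\tfrac{5}{8\pi}\log n\bigr| = O(\log\log n) \quad\text{and}\quad \V_{\mathbb{P}}(\Gamma)=O(\log n).
\]
Indeed, the mean error is then $o((\log n)^{1/2+\varepsilon})$, and
\[
\mathbb{P}\bigl(|\Gamma-\mathbb{E}\Gamma|\ge \tfrac12(\log n)^{1/2+\varepsilon}\bigr)\le \frac{4\V(\Gamma)}{(\log n)^{1+2\varepsilon}} = O\bigl((\log n)^{-2\varepsilon}\bigr).
\]
This is not yet $(\log n)^{-3/2+\varepsilon}$. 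We therefore need a higher moment: the fourth centered moment should be $O((\log n)^2)$, as for a sum with weakly dependent increments of variance of order one per dyadic scale. Markov's inequality with the fourth moment then gives $O((\log n)^{2-2-4\varepsilon})$, which is still too weak. So we aim instead for the $p$-th centered moment with $p$ large: $\mathbb{E}|\Gamma-\mathbb{E}\Gamma|^p=O((\log n)^{p/2})$. This yields a tail bound of order $(\log n)^{-p\varepsilon}$. Choosing $p\ge (3/2-\varepsilon)/\varepsilon$ beats $(\log n)^{-3/2+\varepsilon}$ whenever $\varepsilon$ is not tiny. For small $\varepsilon$ we would use a hybrid: a Gaussian-type martingale concentration bound yields a stretched exponential tail, which is far smaller than any polynomial in $\log n$.

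\textbf{Mean.} By \eqref{Pnx}, $P_j(S_j)=p_j(S_j)+O(j^{-2})$, and
\[
\mathbb{E}[p_j(S_j)]=\sum_x p_j(x)P_j(x)=\sum_x p_j(x)^2+O(j^{-2}) = \frac{5}{8\pi j}+O(j^{-2}).
\]
The last step is a Riemann-sum computation. Summing over $\lfloor\log n\rfloor<j\le n$ gives $\frac{5}{8\pi}(\log n-\log\log n)+O(1)$.

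\textbf{Concentration via martingale blocks.} Group the indices into dyadic blocks $I_k=(2^{k},2^{k+1}]$ for $k$ from $\log_2\log n$ to $\log_2 n$, and set $Y_k=\sum_{j\in I_k}P_j(S_j)$. Each $Y_k$ is bounded: $Y_k\le C_\eqref{upbdPnx}\sum_{j\in I_k}1/j\le C$ by \eqref{upbdPnx}. Consider the Doob martingale $M_k=\mathbb{E}[\Gamma\mid \mathcal{F}_{2^{k+1}}]$. Its increment $D_k$ is the effect of the walk's moves during block $k$. These moves change $Y_k$ by $O(1)$ and also shift the position $S_{2^{k+1}}$ by an amount $\Delta$ of order $2^{k/2}$. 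The shift alters the conditional expectation of later blocks $\ell>k$ by at most
\[
\sum_{j\in I_\ell}\sup_y\bigl|\mathbb{E}_{y+\Delta}P_j(S_{j-2^{k+1}}')-\mathbb{E}_{y}P_j(S'_{j-2^{k+1}})\bigr|.
\]
A heat-kernel gradient estimate (from \eqref{Pnx} and the Gaussian form of $p_j$) bounds this by $O(2^{(k-\ell)/2})$, up to a bounded error for $\ell$ close to $k$. Summing the geometric series gives $|D_k|\le C$ almost surely, uniformly in $k$. There are $O(\log n)$ increments, so the Azuma--Hoeffding inequality gives
\[
\mathbb{P}(|\Gamma-\mathbb{E}\Gamma|\ge t)\le 2\exp\bigl(-t^2/(C\log n)\bigr).
\]
With $t=\frac12(\log n)^{1/2+\varepsilon}$ this is $\exp(-c(\log n)^{2\varepsilon})$, which is $O((\log n)^{-3/2+\varepsilon})$. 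The restriction $j>\log n$ is what lets the block count start at $\log_2\log n$; it is harmless anyway.

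\textbf{Main obstacle.} The hard step is the uniform bound $|D_k|\le C$. The shift $\Delta$ is only typically of order $2^{k/2}$; it is not bounded by that. The fix is to bound $|\Delta|\le 2^{k+1}$ deterministically and use $\mathbb{E}_y P_j(S'_m)=\sum_z P_m(z-y)P_j(z)\le \sup_z P_{j+m}$-type smoothing. Alternatively, we can use a martingale inequality with conditionally sub-Gaussian increments (Freedman or Burkholder) in place of Azuma. Then $\mathbb{E}|D_k|^q\le C_q$ follows from the Gaussian tails of $\Delta/2^{k/2}$. Burkholder's inequality gives $\mathbb{E}|\Gamma-\mathbb{E}\Gamma|^p\le C_p(\log n)^{p/2}$, and taking $p$ large enough completes the proof.
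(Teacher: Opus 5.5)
Your argument works, but only through the Burkholder alternative in your last paragraph, and it is a genuinely different route from the paper's.

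\textbf{What needs correcting.} The almost-sure bound $|D_k|\le C$ is false. Your first repair (a deterministic bound on the block displacement plus smoothing) does not rescue it. Since $\mathbb{E}[P_j(S_j)\mid S_{2^{k+1}}=y]=P_{2j-2^{k+1}}(y)$, a later block $\ell$ enters $D_k$ through $h_\ell(y)=\sum_{j\in I_\ell}P_{2j-2^{k+1}}(y)$. This function has size of order one and spatial scale $2^{\ell/2}$. For $k<\ell\le 2k$ its oscillation over a ball of radius $2^k$ can therefore be of order one, so the best almost-sure bound is $|D_k|\lesssim k$, and Azuma then gives nothing. What does work is the estimate $|P_N(y)-P_N(y')|\lesssim \|y-y'\|N^{-3/2}+N^{-2}$, which follows from \eqref{Pnx}. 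Since $2j-2^{k+1}\ge 2^\ell$ for $j\in I_\ell$, $\ell>k$, summing over $\ell$ gives
\[
|D_k|\le C\bigl(1+2^{-k/2}\|S_{2^{k+1}}-S_{2^k}\|\bigr).
\]
The displacement here is independent of $\mathcal{F}_{2^k}$ and has uniform Gaussian tails at scale $2^{k/2}$ by Hoeffding. Hence $\mathbb{E}[|D_k|^p\mid \mathcal{F}_{2^k}]\le C_p$, and Burkholder plus H\"older give $\mathbb{E}|\Gamma-\mathbb{E}\Gamma|^p\le C_p(\log n)^{p/2}$. You must also include the initial increment $\mathbb{E}[\Gamma\mid\mathcal{F}_{2^{k_0}}]-\mathbb{E}\Gamma$, which obeys the same kind of bound. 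Finally, your worry about small $\varepsilon$ is unfounded: $p$ may depend on $\varepsilon$, so $p\ge (3/2-\varepsilon)/\varepsilon$ works for every fixed $\varepsilon\in(0,1/2)$. This is exactly the paper's choice $m=\lfloor 3/(2\varepsilon)\rfloor$.

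\textbf{Comparison with the paper.} The paper replaces $P_j$ by the Gaussian kernel and discards $j\le e^{\sqrt{\log n}}$, a deterministic $O(\sqrt{\log n})$ contribution. It then couples the walk with Brownian motion via KMT on an event of probability $1-O(1/n)$. It rewrites $\int_{e^{k-1}}^{e^k} p_s(B_s)\,ds$ as a bounded functional $Y_k$ of a stationary Ornstein--Uhlenbeck process, and imports the moment bound $\mathbb{E}|\sum_{k\le N}(Y_k-\frac{1}{4\pi})|^m\le C(m)N^{m/2}$ from \cite{BW22}. Your route stays on the lattice: the mean comes from the local CLT, and the same $(\log n)^{p/2}$ moment bound comes from a dyadic Doob martingale controlled by the heat-kernel gradient estimate. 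It avoids strong approximation and mixing arguments entirely. With a martingale inequality for conditionally sub-Gaussian increments it even yields the much stronger tail $\exp(-c(\log n)^{2\varepsilon})$. The paper's route, in exchange, reuses the KMT/Ornstein--Uhlenbeck framework already set up for Lemma \ref{lem: SRW}, and the constant $\frac{5}{8\pi}$ falls out directly from stationarity.
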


The proof of these two lemmas is postponed to Appendix \ref{appendix}.

\subsection{Change of measure and the spinal decomposition}
\label{changeofm}

Recall that $\{Z_n\}_{n\ge0}$ is a critical BGW process. For any $x\in\Z^2$, $\{Z_n\}_{n\ge0}$ is a martingale under $\P_x$ with respect to the natural filtration $\{\mathcal{F}_n:=\sigma((u, S_u)_{|u|\le n})\}_{n\ge0}$. We can define a new probability measure $\Q_x$ via the martingale $\{Z_n\}$ as follows:
\[
\frac{\d \Q_x}{\d \P_x}\Big\vert_{\mathcal{F}_n}=Z_n, \quad \forall n\ge0.
\]

Next, we introduce a probability measure $\Q_x^*$ on the branching random walk $\{(u,S_u), u\in\T\}$ with a marked ray $(w_n)_{n\ge0}$, so that the marginal law of $\{(u,S_u), u\in\T\}$ under $\Q_x^*$ is exactly $\Q_x$.

\begin{itemize}
\item At time $0$, we start with an initial ancestor $w_0=\varnothing$,  located at $S_\varnothing =x$. 
\item For any time $n\ge0$, assume that the system is well constructed up to time $n$. At time $n+1$, $w_n$ dies and gives birth to a random number of children according to the size-biased offspring law $\widehat{\mu}=\{\widehat{\mu}(k) := k\mu(k)\}_{k\ge0}$, while any other individual of the $n$-th generation dies and produces independently a random number of children according to $\mu$. Each new born individual makes an independent lazy simple random walk jump from the position of its parent. All children of the individuals of the $n$-th generation form the $(n+1)$-th generation. 
\item Among the children of $w_n$, we choose uniformly one to be $w_{n+1}$. The system continues to evolve in this way.
\end{itemize}

By abuse of notation, we still use $\T$ to represent the genealogical tree for this system, rooted at $w_0=\varnothing$. For any individual $u\in\T$, its position is still denoted by $S_u$, and its generation is still denoted by $|u|$. The marked ray $(w_n)_{n\ge0}$ is called the spine of $\T$. 
By considering the lexicographic order $\prec$ on $\T$, we write $\mathcal{L}(w_{n+1})$ (or $\mathcal{R}(w_{n+1})$) to be the set of children of $w_n$ which are on the left (or right respectively) of $w_{n+1}$. To make this notion clear, $u \prec w_{n+1}$ for each $u\in \mathcal{L}(w_{n+1})$, and $w_{n+1}\prec v$ for each $v\in \mathcal{R}(w_{n+1})$.
The corresponding cardinalities of $\mathcal{L}(w_{n+1})$ and $\mathcal{R}(w_{n+1})$ are denoted by $L(w_{n+1})$ and $R(w_{n+1})$. It follows immediately from the construction that
\begin{equation}
\label{jointLR}
\Q^*_x(L(w_{n+1})=i, R(w_{n+1})=j) = \mu(i+j+1), \quad \forall i,j\ge0, \forall n\ge0.
\end{equation}

As a consequence of the above construction, we have the following proposition. One can refer to the lecture notes of Shi \cite{Shi-book} for a proof. 

\begin{prop}\label{spine}
Under $\Q^*_x$, the following assertions hold.
\begin{enumerate}
\item For any $n\ge0$,
\[
\Q_x^*(w_n= u \vert \mathcal{F}_n ) = \frac{\ind{|u|=n}}{Z_n}, \forall u\in\T \mbox{ such that } |u|=n,
\]
where $\mathcal{F}_n=\sigma((u,S_u), |u|\le n)$.
\item The random vectors $((L(w_n), R(w_n)))_{n\ge 1}$ are i.i.d.
\item Along the spine $(w_n)_{n\ge0}$, the spatial movement $(S_{w_n})_{n\ge0}$ is a lazy simple random walk on $\Z^2$, and it is independent of $((L(w_n), R(w_n)))_{n\ge 1}$.
\item Conditionally on $\mathcal{G}_\infty:=\sigma(\{(w_n, S_{w_n});  (u, S_u)_{u\in \cL(w_{n+1})\cup\cR(w_{n+1})}\}_{n\ge0})$, for every individual $u\in \bigcup_{n\ge1}(\cL(w_n)\cup\cR(w_n))$, its descendants $(v, S_v)_{v\in\T_u}$ form an independent branching random walk following the law of $\P_{S_u}$. Here, $\T_u$ represents the subtree of $\T$ rooted at $u$ that is formed by all the descendants of $u$ in $\T$.
\end{enumerate}

\end{prop}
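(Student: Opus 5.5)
We argue by induction on the generation $n$, following the recursive construction of $\Q^*_x$. The key step is the identity
\begin{equation*}
\E_{\Q^*_x}\big[F\big((u,S_u)_{|u|\le n}\big)\,\mathbf{1}_{\{w_n=v\}}\big]=\E_x\big[F\big((u,S_u)_{|u|\le n}\big)\,\mathbf{1}_{\{v\in\T,\,|v|=n\}}\big],
\end{equation*}
valid for every bounded measurable $F$ and every label $v\in\N^n$.

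To prove it, write the left-hand side as a product of one-step transition weights along generations $0,\dots,n-1$. At each generation $k$, the ancestor $v|_k$ plays the role of $w_k$. Its offspring number $k'$ receives probability $k'\mu(k')$ under $\widehat\mu$. The child $v|_{k+1}$ is then selected as $w_{k+1}$ with probability $1/k'$. The product is therefore $\mu(k')$, which is exactly the $\P_x$-weight. All other individuals, and all spatial jumps, have the same law under both measures. Summing the identity over $v$ gives $\frac{\d\Q_x}{\d\P_x}\big|_{\mathcal F_n}=Z_n$, so the marginal law under $\Q^*_x$ is indeed $\Q_x$.

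Assertion 1 now follows. The identity states that the $\mathcal F_n$-measure $A\mapsto \Q^*_x(A,\,w_n=v)$ has density $\mathbf 1_{\{v\in\T,\,|v|=n\}}$ with respect to $\P_x$. Dividing by the density $Z_n$ of $\Q^*_x$ on $\mathcal F_n$, we obtain $\Q^*_x(w_n=v\mid\mathcal F_n)=\mathbf 1_{\{|v|=n\}}/Z_n$ on the event $Z_n\ge1$, which has full $\Q^*_x$-measure.

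Assertions 2 and 3 are read off directly from the construction. At each step, the spine individual draws its offspring number $N$ from $\widehat\mu$, independently of the past. Given $N$, the position of $w_{n+1}$ among the children is uniform on $\{1,\dots,N\}$, and this determines $(L,R)=(i,j)$ with $N=i+j+1$, consistent with \eqref{jointLR}. This yields i.i.d. pairs $(L(w_n),R(w_n))$. The spatial displacement of $w_{n+1}$ is an independent $\nu$-jump, drawn independently of the offspring numbers and of which child is selected. Hence $(S_{w_n})$ is a lazy simple random walk independent of $((L(w_n),R(w_n)))_{n\ge1}$.

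Assertion 4 also comes from the construction. Every non-spine individual reproduces according to $\mu$, and its children make independent $\nu$-jumps. Consequently the subtree rooted at a sibling $u$ of the spine evolves as a $\P_{S_u}$ branching random walk, driven by randomness disjoint from the spine data that generate $\mathcal G_\infty$.

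The only delicate point is formalizing this conditional independence. We handle it by realizing everything on a product space, attaching to each label of $\mathcal U$ an independent offspring variable and independent jump variables. The spine choices and the size-biased offspring numbers use separate randomness. With this realization, $\mathcal G_\infty$ is generated by the spine-related variables together with the labels and positions of the spine's siblings. The subtrees of those siblings are measurable with respect to independent families of variables indexed by their descendants, so the conclusion follows from the independence of disjoint blocks of independent variables.
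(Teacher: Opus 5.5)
Your proposal is correct, and it is the standard proof of the spinal decomposition. The paper does not prove this proposition itself; it cites Shi's lecture notes, where the argument likewise rests on the identity $\E_{\Q^*_x}[F\,\mathbf{1}_{\{w_n=v\}}]=\E_x[F\,\mathbf{1}_{\{v\in\T,\,|v|=n\}}]$, obtained from $\widehat\mu(k)\cdot\frac{1}{k}=\mu(k)$, with the remaining assertions read off from the construction. The one detail worth writing out is in assertion 4: the labels of the spine's siblings are random, so you should first condition on the spine's size-biased offspring numbers and selection variables (a Fubini argument) before invoking independence of disjoint blocks of variables.
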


In the rest of the paper, we will use $\E_{\Q_x}$ and $\E_{\Q^*_x}$ to denote the expectation with respect to $\Q_x$ and $\Q^*_x$ respectively.  When $x=0$, we will simply write $\Q$ and $\Q^*$ instead of $\Q_0$ and $\Q^*_0$ respectively.

Recall that the ancestral line of $u\in\T$ is denoted by $(u\vert_0=\varnothing, u\vert_1,\cdots, u\vert_{|u|}=u)$. The following Many-to-One lemma is derived from Proposition \ref{spine}.

\begin{lem}
For any integer $n\in\N$ and $x\in\Z^2$, and for any non-negative measurable function $F\colon \mathbb{R}^{n+1}\to \R$, we have
\[
\E_x\Bigg[ \sum_{|u|=n} F(S_{u\vert_k}, 0\le k\le n)\Bigg] = \mathbb{E}_x\big[F(S_k, 0\le k\le n)\big].
\]
\end{lem}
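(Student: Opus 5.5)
The plan is to deduce the Many-to-One identity from the change of measure together with part 1 of Proposition \ref{spine}, and then to read off the law of the spine's trajectory from part 3.

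\textbf{Step 1: pass to $\Q_x$.} Fix $n$ and a non-negative measurable $F$. The random variable $\sum_{|u|=n} F(S_{u\vert_k}, 0\le k\le n)$ is $\mathcal{F}_n$-measurable and vanishes on $\{Z_n=0\}$. On $\{Z_n\ge1\}$ I write it as
\[
Z_n\cdot \frac{1}{Z_n}\sum_{|u|=n} F(S_{u\vert_k}, 0\le k\le n).
\]
Since $\frac{\d \Q_x}{\d \P_x}\big\vert_{\mathcal{F}_n}=Z_n$, this gives
\[
\E_x\Big[\sum_{|u|=n} F(S_{u\vert_k}, 0\le k\le n)\Big]=\E_{\Q_x}\Big[\frac{1}{Z_n}\sum_{|u|=n}F(S_{u\vert_k}, 0\le k\le n)\Big].
\]
This step is legitimate because $Z_n>0$ $\Q_x$-a.s., since $\Q_x(Z_n=0)=\E_x[Z_n\ind{Z_n=0}]=0$. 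For general non-negative $F$ I would first argue with bounded $F$ and then apply monotone convergence, or simply note that all quantities are non-negative.

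\textbf{Step 2: identify the average with the spine.} The marginal of $\Q_x^*$ on the branching random walk is $\Q_x$, so the last expectation can be computed under $\Q_x^*$. By part 1 of Proposition \ref{spine}, $\Q_x^*(w_n=u\mid\mathcal{F}_n)=\ind{|u|=n}/Z_n$. Conditioning on $\mathcal{F}_n$, and using that $(S_{w_n\vert_k})_{k\le n}=(S_{w_k})_{k\le n}$ is a functional of the $\mathcal{F}_n$-measurable data together with the choice of $w_n$, I get
\[
\E_{\Q_x^*}\big[F(S_{w_k},0\le k\le n)\,\big\vert\,\mathcal{F}_n\big]=\sum_{|u|=n}\frac{1}{Z_n}F(S_{u\vert_k},0\le k\le n).
\]
Taking expectations then yields $\E_x[\sum_{|u|=n}F(\cdots)]=\E_{\Q_x^*}[F(S_{w_k},0\le k\le n)]$.

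\textbf{Step 3: conclude.} By part 3 of Proposition \ref{spine}, $(S_{w_k})_{k\ge0}$ under $\Q_x^*$ is a lazy simple random walk started at $x$. Hence $\E_{\Q_x^*}[F(S_{w_k},0\le k\le n)]=\mathbb{E}_x[F(S_k,0\le k\le n)]$, which is the claim.

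The only point requiring care is Step 2. There one must check that the ancestors of $w_n$ are exactly $w_0,\dots,w_n$, which is immediate from the construction: $w_{k+1}$ is a child of $w_k$. One must also confirm that summing over the finitely many $u$ with $|u|=n$ is compatible with the conditional expectation. Both are routine, so I expect no genuine obstacle.
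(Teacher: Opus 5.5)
Your proposal is correct and follows the route the paper indicates, namely deriving the lemma from Proposition~\ref{spine}. You change measure to $\Q_x$, use part 1 to replace the generation-$n$ average by the spine (the same chain of equalities as in \eqref{changeofm-unx}), and then use part 3 to identify $(S_{w_k})_{k\le n}$ as a lazy simple random walk started at $x$.
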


A direct application of the Many-to-One lemma implies that for any $B\subset\Z^2$ and for any $n\ge1$,
\[
 \E[Z_n(B)] = \sum_{y\in B}P_n(y) =: P_n(B).
\]

\section{Local survival probability: proof of Theorem \ref{thm: Sprobab}}
\label{sec:local_survival_probability}

In this section, we study the asymptotics of $u_n(x) =\P(Z_n(x) \ge 1)$ and prove Theorem \ref{thm: Sprobab}.

On the event $\{Z_n(x) \ge 1\}$, let $\ell_n(x)$ be the leftmost individual in the lexicographic order of the set $\{u\in\T \colon |u|=n, S_u =x\}$. 
Then, observe that
\[
\ind{Z_n(x) \ge 1} = \sum_{|u|=n}\ind{u = \ell_n(x) }.
\]
Let us rewrite $u_n(x)$ using the change of measures introduced in Section \ref{changeofm}:
\begin{align}
\label{changeofm-unx}
u_n(x) = \,& \P(Z_n(x) \ge 1) = \E\Bigg[\sum_{|u|=n}\ind{u=\ell_n(x)}\Bigg]\\
=\,& \E_{\Q}\Bigg[ \sum_{|u|=n } \frac{1}{Z_n} \ind{u = \ell_n(x)} \Bigg] = \E_{\Q^*} \Bigg[ \sum_{|u|=n } \frac{1}{Z_n} \ind{u = \ell_n(x)} \Bigg] \nonumber\\
= \,&  \E_{\Q^*} \Bigg[ \sum_{|u|=n } \Q^*(w_n= u \vert \nf_n) \ind{u = \ell_n(x)} \Bigg] = \E_{\Q^*}\Bigg[\sum_{|u|=n}\ind{w_n=u} \ind{u = \ell_n(x)} \Bigg]\nonumber\\
=\,& \Q^*(w_n = \ell_n(x) ).\nonumber
\end{align}
Note that $w_n= \ell_n(x)$ means that $S_{w_n}=x$ and for any $u\in\cup_{k=1}^n \cL(w_k)$, $u$ has no descendant located at $x$ at time $n$. 
For any individual $u$,
given any $A\subset \Z^2$, we denote by $Z^u_m(A)$ the number of the descendants of $u$ located in $A+S_u$ at time 
$|u|+m$.  
For a singleton $A=\{z\}$, we write for simplicity $Z^u_m(z)=Z^u_m(\{z\})$.
Then, for all $u\in\bigcup_{k=1}^n(\cL(w_k)\cup\cR(w_k))$, $\{Z_m^u(\cdot)\}_{m\ge0}$ are independent and all distributed as $\{Z_m(\cdot)\}_{m\ge0}$ under $\P_0$.
We thus obtain 
\begin{align*}
u_n(x) = & \,\E_{\Q^*}\left[ \ind{S_{w_n}=x} \prod_{k=1}^n \prod_{u\in\cL(w_k)}\ind{Z^u_{n-k}(x-S_u) =0}\right]\\
=&\, \E_{\Q^*}\left[  \ind{S_{w_n}=x}  \prod_{k=1}^n \Big(1-\sum_y u_{n-k}(x-S_{w_{k-1}}-y)\nu(y)\Big)^{L(w_k)} \right],
\end{align*}
where, for reminder, $\nu$ is the jump law of the lazy simple random walk. 

We can also deal with $\E[Z_n(x)]$ similarly as \eqref{changeofm-unx} to get 
\begin{align*}
\E[Z_n(x)] = & \,\E[Z_n(x)\ind{Z_n(x) \ge 1}]\\
=& \,\E_{\Q^*}[Z_n(x) \ind{w_n = \ell_n(x)}]\\
= & \,\E_{\Q^*}\left[\bigg(1+\sum_{j=1}^n \sum_{v\in\cR(w_j)} Z^v_{n-j}(x-S_v) \bigg)\ind{S_{w_n}=x} \prod_{k=1}^n \prod_{u\in\cL(w_k)}\ind{Z^u_{n-k}(x-S_u) =0}\right],
\end{align*}
by use of the fact that on $w_n=\ell_n(x)$, only $w_n$ and the descendants of $v\in \bigcup_{j=1}^n \cR(w_j)$ can contribute to $Z_n(x)$.

Since $\E[Z_n(x)]=P_n(x)$, by Proposition \ref{spine} and conditioning on $\mathcal{G}_\infty$, it follows that
\begin{align}
\label{changeofm-pnx}
P_n(x) 
=& \,\E_{\Q^*}\left[\bigg(1 + \sum_{j=1}^n R(w_j)\sum_{y}P_{n-j}(x-S_{w_{j-1}}-y)\nu(y)\bigg)\ind{S_{w_n}=x} \right.\nonumber\\
&\qquad\qquad\qquad\qquad \left.\times \prod_{k=1}^n \Big(1-\sum_y u_{n-k}(x-S_{w_{k-1}}-y)\nu(y)\Big)^{L(w_k)} \right].
\end{align}
Note that $P_1=\nu$ and that $\sum_yP_m(z-y)P_1(y)=P_{m+1}(z)$. After a change of index, \eqref{changeofm-pnx} becomes 
\begin{align*}
P_n(x) =& \,\E_{\Q^*}\left[ \bigg(1+ \sum_{j=1}^{n} R(w_{n-j+1}) P_{j}(x- S_{w_{n-j}})\bigg) \ind{S_{w_n}=x} \right.\nonumber\\
&\qquad\qquad\qquad\qquad \left.\times \prod_{k=1}^{n} \Big(1-\sum_y u_{k-1}(x-S_{w_{n-k}}-y)\nu(y)\Big)^{L(w_{n-k+1})} \right].
\end{align*}
Now we apply Proposition \ref{changeofm} and time reversal (since the jump law $\nu$ is symmetric) to obtain that
\begin{align*}
P_n(x) = \mathbb{E}_x\left[\bigg(1+ \sum_{j=1}^n R_j P_j(x-S_j) \bigg)\ind{S_n =0}  \prod_{k=1}^{n} \Big(1-\sum_y u_{k-1}(x-S_k-y)\nu(y)\Big)^{L_k} \right],
\end{align*}
where, for any $x\in \Z^2$, $(L_j, R_j)_{j\ge1}$ under $\mathbb{P}_x$ are i.i.d.~random vectors distributed as \eqref{jointLR}, and are independent of $(S_k)_{k\ge0}$. By shifting $S_0$ to the origin and the fact that $(-S_j)_j$ is distributed as $(S_j)_j$ under $\mathbb{P}_0$, it follows that
\begin{equation}\label{spine-Pnx}
P_n(x) = \mathbb{E}\left[ \bigg( 1+ \sum_{j=1}^n R_j P_j( S_j) \bigg) \ind{ S_n = x } \prod_{k=1}^n ( 1-  u_{k-1}\ast \nu (S_k) )^{L_k} \right]
\end{equation}
where $u_{k}\ast\nu(z) := \sum_y u_k(z-y)\nu(y)$.
Similarly, we also have 
\begin{equation}\label{spine-unx}
u_n(x) = \mathbb{E} \left[ \ind{ S_n = x } \prod_{k=1}^n ( 1-  u_{k-1}\ast \nu (S_k) )^{L_k} \right].
\end{equation}

In the rest of this section, we are going to approximate $\sum_{j=1}^n R_j P_j(S_j)$ by $\alpha \log n$ and verify that as $n\to \infty$, uniformly for all $x\in \Z^2$,
\begin{equation}
    P_n(x) = (\alpha \log n) u_n(x) + \frac{o_n(1)}{n} \quad \mbox{ if the offspring law $\mu$ has finite second moment;} \label{eq:Pn-un-2nd}
\end{equation}
and for any $\varepsilon\in (0,1/2)$, 
\begin{equation}
    P_n(x) = (\alpha \log n) u_n(x) + \frac{O(1)}{n(\log n)^{\frac12-\varepsilon}} \mbox{ if the offspring law $\mu$ has finite third moment}. \label{eq:Pn-un-3rd}
\end{equation}
These estimates are equivalent to Theorem \ref{thm: Sprobab}. The proof will be divided into two steps.

\textbf{Step 1.} We first establish a uniform upper bound for $u_n(x)$, by showing the existence of a constant $C_\eqref{upbdu}>0$ such that for any $n\ge 2$,
\begin{equation}
\label{upbdu}
\sup_{x\in\Z^2}u_n(x) \le \frac{ C_\eqref{upbdu}  }{n\log n}.
\end{equation}
In fact, we will prove a stronger estimate that for any fixed $a\in(0,1)$, there exists a positive constant $C_{\eqref{upbduv2}}(a)$ depending on $a$ such that for any $x\in\Z^2$, 
\begin{equation}
\label{upbduv2}
u_n(x) \le \frac{C_{\eqref{upbduv2}}(a)}{\log n}\Big(P_n(x) +\frac{1}{\log n} \sup_{\|z\|\le n^a}P_{n-\lfloor n^a\rfloor}(x-z)\Big).
\end{equation}
Clearly, \eqref{upbdu} follows from \eqref{upbduv2} by applying \eqref{upbdPnx}.
Notice that $(L_j, R_j)_{j\ge1}$ are i.i.d., while for fixed $j$, $L_j$ and $R_j$ are in general correlated. Let $f_L(s):=\mathbb{E}[s^{L_1}]$ denote the generating function of $L_1$. Then
\begin{align}
\label{indepLR}
&P_n(x) =  u_n(x) +\sum_{j=1}^n \mathbb{E}\left[  R_j P_j( S_j) \ind{S_n =x}  \prod_{k=1}^{n} (1- u_{k-1}\ast \nu (S_k) )^{L_k} \right]\\
=\,&  u_n(x) +\sum_{j=1}^n \mathbb{E}\left[ P_j(S_j) \ind{S_n =x } \prod_{k\neq j, 1\le k\le n} f_L(1- u_{k-1}\ast \nu (S_k) ) \mathbb{E}\Big[R_j(1- u_{j-1}\ast \nu (S_j))^{L_j}\Big\vert (S_k)_{k\le n}\Big]\right]\nonumber.
\end{align}
Note that by \eqref{jointLR}, $\mathbb{E}[L_j]=\mathbb{E}[R_j]=\frac{\sigma^2}{2}$. One sees that for $h\in(0,1)$,
\begin{align}
\Big|\mathbb{E}\Big[R_j(1-h)^{L_j}\Big]- \mathbb{E}[R_j] \mathbb{E}\Big[(1-h)^{L_j}\Big]\Big| \le &\,\mathbb{E}\left[ R_j \Big| (1-h)^{L_j} - \mathbb{E}\big[ (1-h)^{L_j} \big] \Big| \right]\label{eq:RLdecouple}\\
\le & \,\mathbb{E}\big[ R_j ( 1 \wedge (hL_j + h\mathbb{E}[L_j]) \big], \nonumber
\end{align}
where the second line comes from the inequality that $|(1-h)^{L_j} - 1 |\le h L_j$. The dominated convergence theorem implies that
\begin{equation*}
\lim_{h\to0+} \mathbb{E}\Big[R_j(1-h)^{L_j}\Big]- \mathbb{E}[R_j] \mathbb{E}\Big[(1-h)^{L_j}\Big] = 0 \quad \mbox{ and }\quad \lim_{h\to0+}\mathbb{E}\Big[(1-h)^{L_j}\Big] = 1.
\end{equation*}
Consequently, 
\begin{equation}\label{approxLR}
\lim_{h\to0+} \frac{\mathbb{E}[R_j(1-h)^{L_j}] }{\mathbb{E}[R_j] \mathbb{E}[(1-h)^{L_j}]}= \lim_{h\to0+} \frac{\mathbb{E}[R_j(1-h)^{L_j}] }{\sigma^2f_L(1-h)/2} = 1.
\end{equation}
To simplify the notation, we will write $f(n)\lesssim g(n)$ to say that there exists some constant $C>0$ independent of $n$ such that $f(n)\leq C g(n)$ for all $n$.
It is clear from Kolmogorov's estimate that for any $n\ge1$, $\sup_z u_n(z) \le \P(Z_n \ge 1) \lesssim \frac1n$. Then, 
\[
\sup_z u_n\ast \nu( z) \le \sup_z u_n(z) \times \sum_y \nu(y) \lesssim \frac1n.
\]
So, by taking $K_n := \lfloor\log n \rfloor$ and by \eqref{approxLR}, for every $K_n< j \le n$, we can approximate 
\[
\mathbb{E}\Big[ R_j (1- u_{j-1}\ast \nu (S_j))^{L_j}\Big\vert (S_k)_{k\le n}\Big]
\]
by $\frac{\sigma^2}{2} f_L(1- u_{j-1}\ast \nu (S_j))$ in a uniform manner, so that \eqref{indepLR} implies that
\begin{align}
\label{roughbdPnx}
P_n(x) &\ge \sum_{j=1}^n \mathbb{E}\left[ P_j(S_j) \ind{S_n =x } \prod_{k\neq j, 1\le k\le n} f_L(1- u_{k-1}\ast \nu (S_k) ) \mathbb{E}\Big[R_j(1- u_{j-1}\ast \nu (S_j))^{L_j}\Big\vert (S_k)_{k\le n}\Big]\right] \\
&\ge  (1+o_n(1))
\sum_{j =K_n+1}^{\lfloor n/\log n\rfloor } \mathbb{E}\left[ P_j(S_j) \ind{S_n =x } \prod_{k\neq j, 1\le k\le n} f_L(1- u_{k-1}\ast \nu (S_k) )   \frac{\sigma^2}{2} f_L(1- u_{j-1}\ast \nu (S_j)) \right] \nonumber \\
&= (1+o_n(1)) \mathbb{E}\left[  \frac{\sigma^2}{2} \ind{S_n=x} \prod_{k=1}^n (1-u_{k-1}\ast \nu(S_k))^{L_k} \cdot \sum_{j=K_n+1}^{\lfloor n/\log n\rfloor } P_j(S_j)\right].\nonumber
\end{align}
The $o_n(1)$ term that appears in \eqref{roughbdPnx} does not depend on $x\in \Z^2$.
Recall that $\Gamma_{K_n+1}^{\lfloor n/\log n \rfloor}(0)=\sum_{j=K_n+1}^{\lfloor n/\log n\rfloor } P_j(S_j) $. Let us write 
\[
\mathbb{E}_\eqref{roughbdPnx}(x):= \mathbb{E}\left[  \frac{\sigma^2}{2} \ind{S_n=x} \prod_{k=1}^n (1-u_{k-1}\ast \nu(S_k))^{L_k} \cdot \Gamma_{K_n+1}^{\lfloor n/\log n \rfloor}(0)\right].
\]
On the one hand, by \eqref{roughbdPnx} and \eqref{upbdPnx}, one sees that for sufficiently large $n$,
\begin{equation}\label{uproughbdPnx}
\mathbb{E}_\eqref{roughbdPnx}(x) \lesssim P_n(x) \lesssim \frac{1}{n}.
\end{equation}
On the other hand, comparing it with \eqref{spine-unx}, we have
\[
    \Big\vert \mathbb{E}_\eqref{roughbdPnx} (x) - u_n(x)  \frac{5\sigma^2}{16\pi}\log n \Big\vert \le   \mathbb{E}\left[  \frac{\sigma^2}{2} \ind{S_n=x} \prod_{k=1}^n (1-u_{k-1}\ast \nu(S_k))^{L_k} \Big\vert \Gamma_{K_n +1}^{\lfloor n/\log n \rfloor} (0)- \frac{5}{8\pi}\log n \Big\vert \right].
\]
Let us take $0<\epsilon<1$. By considering the following event 
\[
    \textrm{Bad}_n:=\Big\{\Big\vert \Gamma_{K_n +1}^{\lfloor n/\log n \rfloor}(0) - \frac{5}{8\pi}\log n \Big\vert  \ge \epsilon \log n\Big\},
\]
and its complementary, the previous inequality yields that
\begin{equation}\label{Pnx-bad}
\Big\vert \mathbb{E}_\eqref{roughbdPnx}(x) - u_n(x)  \frac{5\sigma^2}{16\pi}\log n \Big\vert \le  \frac{\sigma^2}{2}  \mathbb{E}_\eqref{Pnx-bad}(x) +  \frac{\sigma^2}{2}  u_n(x) \epsilon \log n,
\end{equation}
where 
\[
\mathbb{E}_\eqref{Pnx-bad}(x):=  \mathbb{E}\left[ \ind{S_n=x} \prod_{k=1}^n (1-u_{k-1}\ast \nu(S_k))^{L_k} \Big\vert \Gamma_{K_n+1}^{\lfloor n/\log n \rfloor}(0) - \frac{5}{8\pi}\log n \Big\vert \ind{\textrm{Bad}_n}\right] .
\]
To bound $\mathbb{E}_\eqref{Pnx-bad}(x)$, observe that $\prod_{k=1}^n (1-u_{k-1}\ast \nu(S_k))^{L_k}\le 1$, and thus
\begin{align*}
\mathbb{E}_\eqref{Pnx-bad}(x) \le & \,\mathbb{E}\left[ \ind{S_n=x} \Big\vert \Gamma_{K_n+1}^{\lfloor n/\log n \rfloor}(0) - \frac{5}{8\pi}\log n \Big\vert \ind{\textrm{Bad}_n}\right] \\
= & \, \mathbb{E}\left[ \Big\vert \Gamma_{K_n+1}^{\lfloor n/\log n \rfloor}(0)  - \frac{5}{8\pi}\log n \Big\vert \ind{\textrm{Bad}_n} \times \mathbb{P}_{S_{\lfloor n/\log n\rfloor}} ( S_{n-\lfloor n/\log n\rfloor} = x ) \right],
\end{align*}
by the Markov property of random walk at time $\lfloor n/\log n\rfloor$. Notice that each jump of random walk is at most of distance 1, so that 
\[
    \mathbb{P}_{S_{\lfloor n/\log n\rfloor}} ( S_{n-\lfloor n/\log n\rfloor} = x ) = P_{n-\lfloor n/\log n\rfloor}(x-S_{\lfloor n/\log n\rfloor})\le \sup_{\|z\|\le n/\log n}P_{n-\lfloor n/\log n\rfloor}(x-z).
\]
By applying the uniform bound \eqref{upbdPnx}, we know that
\[
\Gamma_{K_n+1}^{\lfloor n/\log n\rfloor }(0) = \sum_{j= K_n+1}^{\lfloor n/\log n\rfloor} P_j(S_j) \le \sum_{j= K_n+1}^{\lfloor n/\log n\rfloor} \frac{ C_\eqref{upbdPnx}}{j} \lesssim \log n.
\]
Moreover, by Lemma \ref{lem: SRW}, $ \mathbb{P}\left(\textrm{Bad}_n\right) = \frac{o_n(1)}{(\log n)^2}$.
As a result, we see that
\begin{align*}
\mathbb{E}_\eqref{Pnx-bad}(x) \leq  & \,  \mathbb{E}\left[ \Big\vert \Gamma_{K_n+1}^{\lfloor n/\log n\rfloor }(0) - \frac{5}{8\pi}\log n \Big\vert \ind{\textrm{Bad}_n} \right] \cdot \sup_{\|z\|\le n/\log n}P_{n-\lfloor n/\log n\rfloor}(x-z)\\
\lesssim & \,\frac{o_n(1)}{\log n} \sup_{y\in \Z^2} P_{n-\lfloor n/\log n\rfloor}(y),
\end{align*}
which is $\frac{o_n(1)}{n\log n}$ by \eqref{upbdPnx}. Here the $o_n(1)$ term is uniform over all $x\in \Z^2$. Going back to \eqref{Pnx-bad}, we deduce that for an arbitrary $\epsilon\in (0,1)$,
\begin{equation}
\label{eq:estimate-roughbdPnx}
 \Big\vert \mathbb{E}_\eqref{roughbdPnx} (x)- u_n(x)  \frac{5\sigma^2}{16\pi}\log n \Big\vert \le  \frac{\sigma^2}{2}  u_n(x) \epsilon \log n +  \frac{o_n(1)}{n\log n},
\end{equation}
Again, the $o_n(1)$ term on the right-hand side is uniform over all $x\in\Z^2$. The last inequality together with \eqref{uproughbdPnx} suffices to show \eqref{upbdu}.

To derive the claimed improvement \eqref{upbduv2} with any fixed $a\in (0,1)$, notice that in the arguments above, if we replace $\Gamma_{K_n+1}^{\lfloor n/\log n\rfloor}(0)$ by $\Gamma_{K_n+1}^{\lfloor n^a\rfloor}(0)$ and replace $\frac{5}{8\pi}\log n$ by $\frac{5}{8\pi}a\log n$, then all the inequalities up to \eqref{Pnx-bad} still hold. 
Meanwhile, we replace the upper bound of $\mathbb{E}_{\eqref{Pnx-bad}}(x)$ by 
\[
    \frac{o_n(1)}{a\log n} \sup_{\|z\|\le n^a}P_{n-\lfloor n^a\rfloor}(x-z)    
\]
with an $o_n(1)$ term that is uniform over all $x\in\Z^2$. Consequently, we obtain 
\begin{align}
P_n(x) \geq &\, (1+o_n(1))\mathbb{E}\left[  \frac{\sigma^2}{2} \ind{S_n=x} \prod_{k=1}^n (1-u_{k-1}\ast \nu(S_k))^{L_k} \sum_{j=K_n+1}^{\lfloor n^a\rfloor } P_j(S_j)\right] \nonumber\\
 \ge & \, (1+o_n(1))\frac{\sigma^2}{2} \bigg(u_n(x) \Big(\frac{5}{8\pi}- \varepsilon\Big) a\log n -  \frac{o_n(1)}{a\log n} \sup_{\|z\|\le n^a}P_{n-\lfloor n^a\rfloor}(x-z) \bigg)\nonumber,
\end{align}
where all the $o_n(1)$ terms are uniform over all $x\in\Z^2$. 
The upper bound \eqref{upbduv2} for $u_n(x)$ readily follows.

\textbf{Step 2.} Now let us study the precise asymptotics of $u_n(x)$. 
In view of \eqref{spine-unx} and \eqref{spine-Pnx}, we have
\begin{equation}\label{Pnx-decomp}
     P_n(x) = u_n(x) +  \mathbb{E}_{\eqref{smallj}} (x)+ \mathbb{E}_\eqref{middlej}(x)  + \mathbb{E}_{\eqref{largej}}(x) , 
\end{equation}
  
where $K_n=\lfloor \log n\rfloor$ and 
\begin{align}
\mathbb{E}_\eqref{smallj}(x) :=& \mathbb{E}\left[   \ind{S_n=x} \prod_{k=1}^n (1-u_{k-1}\ast \nu(S_k))^{L_k} \sum_{j=1}^{K_n} R_j P_j(S_j)   \right], \label{smallj}\\
\mathbb{E}_\eqref{middlej}(x) :=& \mathbb{E}\left[   \ind{S_n=x} \prod_{k=1}^n (1-u_{k-1}\ast \nu(S_k))^{L_k} \sum_{j=K_n+1}^{\lfloor n/\log n\rfloor} R_j P_j(S_j)   \right], \label{middlej}\\
\mathbb{E}_\eqref{largej}(x) := &  \mathbb{E}\left[   \ind{S_n=x} \prod_{k=1}^n (1-u_{k-1}\ast \nu(S_k))^{L_k} \sum_{j=\lfloor n/\log n\rfloor+1}^n R_j P_j(S_j)   \right] \label{largej}.
\end{align}

Similarly as \eqref{indepLR}, for any integers $a_n, b_n$ such that $1\le a_n < b_n \le n$, one has by independence
\begin{align}\label{Pnxa-b}
&\mathbb{E}_\eqref{Pnxa-b}(x):= \mathbb{E}\left[   \ind{S_n=x} \prod_{k=1}^n (1-u_{k-1}\ast \nu(S_k))^{L_k} \sum_{j=a_n}^{b_n} R_j P_j(S_j)   \right] \\
 =& \sum_{j=a_n}^{b_n} \mathbb{E}\left[  P_j(S_j) \ind{S_n=x} \prod_{k\neq j, 1\le k\le n} (1- u_{k-1}\ast \nu(S_k))^{L_k} \mathbb{E}\big[ R_j (1-u_{j-1}\ast \nu(S_j))^{L_j}\vert (S_k)_{k\le n}\big] \right].\nonumber
\end{align}
It is obvious that 
\[
    \mathbb{E}\big[ R_j (1-u_{j-1}\ast \nu(z) )^{L_j} \big] \le \mathbb{E}[R_j] =\frac{\sigma^2}{2}.
\]
On the other hand, for any $h\in[0,1]$, $\mathbb{E}[(1-h)^{L_j}]\geq \mathbb{P}(L_j=0)>0$. 
It implies that 
\begin{equation}
    \label{eq:productL-lowerbdd}
    \inf_{z\in\Z^2} \inf_{j\ge1} \mathbb{E}\big[(1-u_{j-1}\ast \nu(z) )^{L_j}\big] \ge \mathbb{P}(L_1=0)>0.
\end{equation}
Hence, there exists some constant $C>0$ only depending on the offspring law such that for any $z\in\Z^2$ and for any $j\ge1$, 
\begin{equation}\label{upbdLR}
\mathbb{E}\big[ R_j (1-u_{j-1}\ast \nu(z) )^{L_j} \big] \le \mathbb{E}[R_j] \le C\cdot \mathbb{E}[R_j] \mathbb{E}\big[(1-u_{j-1}\ast \nu(z) )^{L_j}\big]\le \frac{C\sigma^2}{2}.
\end{equation}

Now applying \eqref{upbdLR} to \eqref{Pnxa-b} yields that uniformly in $x$,
\begin{align*}
\mathbb{E}_\eqref{Pnxa-b}(x) \lesssim &\,\sum_{j=a_n}^{b_n} \mathbb{E}\left[  P_j(S_j) \ind{S_n=x} \prod_{k\neq j, 1\le k\le n} (1- u_{k-1}\ast \nu(S_k))^{L_k} 
\right]\\
= &\,\mathbb{E}\left[ \ind{S_n=x}\prod_{k=1}^n (1-u_{k-1}\ast \nu(S_k))^{L_k} \sum_{j=a_n}^{b_n} P_j(S_j)\right]\\
\lesssim &\, \log\Big(\frac{b_n+1}{a_n}\Big)\mathbb{E}\left[ \ind{S_n=x}\prod_{k=1}^n (1-u_{k-1}\ast \nu(S_k))^{L_k} \right] =  \log\Big(\frac{b_n+1}{a_n}\Big) u_n(x),
\end{align*}
where the last line comes from \eqref{spine-unx} and the fact that by \eqref{upbdPnx},
\[
\sum_{j=a_n}^{b_n} P_j(S_j) \lesssim \sum_{j=a_n}^{b_n} \frac1j \le \log\Big(\frac{b_n+1}{a_n}\Big).
\]
Then it follows that
\begin{align*}
\sup_{x\in\Z^2}\mathbb{E}_{\eqref{smallj}}(x) \lesssim &  \log\log n\times \sup_{x\in\Z^2}u_n(x),\\
\sup_{x\in\Z^2}\mathbb{E}_\eqref{largej}(x) \lesssim & \log\log n \times \sup_{x\in\Z^2}u_n(x).
\end{align*}
Together with \eqref{upbdu}, we get
\begin{align*}
\sup_{x\in\Z^2}\mathbb{E}_{\eqref{smallj}}(x) \lesssim & \frac{\log\log n}{n\log n},\\
\sup_{x\in\Z^2}\mathbb{E}_\eqref{largej}(x) \lesssim & \frac{\log\log n}{n\log n}.
\end{align*}

Under the assumption \eqref{hyp}, our previous arguments starting from \eqref{indepLR} up to \eqref{roughbdPnx} have shown that 
\[
    \mathbb{E}_{\eqref{middlej}}(x)=\mathbb{E}_{\eqref{roughbdPnx}}(x)(1+o_n(1)),
\]
with an $o_n(1)$ term that is uniform over all $x\in \Z^2$. Plugging \eqref{upbdu} into the right-hand side of\eqref{eq:estimate-roughbdPnx}, and using the fact that $\varepsilon$ in \eqref{eq:estimate-roughbdPnx} can be arbitrarily small, we deduce that
\[
    \mathbb{E}_{\eqref{middlej}}(x)=u_n(x)  \frac{5\sigma^2}{16\pi}\log n +\frac{o_n(1)}{n},
\]
where the $o_n(1)$ term is still uniform over all $x\in \Z^2$.
Combining all the estimates above for $\mathbb{E}_{\eqref{smallj}}(x), \mathbb{E}_{\eqref{middlej}}(x)$ and $\mathbb{E}_\eqref{largej}(x) $, we finally establish \eqref{eq:Pn-un-2nd}.

\medskip

Let us turn to prove \eqref{eq:Pn-un-3rd} when the offspring law is assumed to have a finite third moment. First, in \eqref{Pnx-decomp}, note that 
\[
\sup_{x\in\Z^2}\left[u_n(x) + \mathbb{E}_{\eqref{smallj}}(x) + \mathbb{E}_\eqref{largej}(x)\right] = \frac{O(1)}{n(\log n)^{\frac12}}.
\]
So, it suffices to verify that for arbitrarily small $\epsilon\in(0,\frac12)$,
\[
\sup_{x\in\Z^2}|\mathbb{E}_{\eqref{middlej}} - (\alpha \log n) u_n(x)| = \frac{O(1)}{n(\log n)^{\frac12-\epsilon}}.
\]
As the offspring law has a finite third moment, the expectation $\mathbb{E}[L_j R_j]$ is bounded. Under this additional assumption, we can improve our estimate of $\mathbb{E}_{\eqref{middlej}}(x) $. 
Recall that by \eqref{eq:RLdecouple}, for any $h\in (0,1)$,
\[
    \Big|\mathbb{E}\Big[R_j(1-h)^{L_j}\Big]- \mathbb{E}[R_j] \mathbb{E}\Big[(1-h)^{L_j}\Big]\Big| \le \mathbb{E}\big[ h R_j (L_j + \mathbb{E}[L_j])\big] \lesssim h.
\]
Hence, for any random variable $X$ independent of $(L_j, R_j)$ and for any functions $F$ and $G$ taking values in $(0,1)$, 
\[
    \Big|\mathbb{E}\Big[R_jF(X)(1-G(X))^{L_j}\Big]- \mathbb{E}[R_j] \mathbb{E}\Big[F(X)(1-G(X))^{L_j}\Big]\Big|  \lesssim \mathbb{E}[F(X)G(X)].
\]
Applying the last estimate, we see that for each $j$,
\begin{align*}
    & \Bigg|\mathbb{E}\bigg[ R_j \ind{S_n=x} \prod_{k=1}^n (1-u_{k-1}\ast \nu(S_k))^{L_k}  P_j(S_j)  \bigg]-\mathbb{E}[R_j]\mathbb{E}\bigg[ \ind{S_n=x} \prod_{k=1}^n (1-u_{k-1}\ast \nu(S_k))^{L_k}  P_j(S_j)  \bigg]\Bigg|\\
    & \qquad \qquad \lesssim \,\mathbb{E}\Bigg[\ind{S_n=x} \bigg(\prod_{k\neq j,1\leq k\leq n}^n (1-u_{k-1}\ast \nu(S_k))^{L_k} \bigg) P_j(S_j) u_{j-1}\ast \nu(S_j)\Bigg].
\end{align*}
Putting this into \eqref{middlej}, we have
\begin{align*}
 &\,\Bigg|\mathbb{E}_\eqref{middlej}(x)- \sum_{j=K_n+1}^{\lfloor n/\log n\rfloor}\mathbb{E}[R_j]\mathbb{E}\Bigg[ \ind{S_n=x} \prod_{k=1}^n (1-u_{k-1}\ast \nu(S_k))^{L_k}  P_j(S_j)   \Bigg] \Bigg|\\
\lesssim &\, \sum_{j=K_n+1}^{\lfloor n/\log n\rfloor} \mathbb{E}\Bigg[\ind{S_n=x} \bigg(\prod_{k\neq j,1\leq k\leq n} (1-u_{k-1}\ast \nu(S_k))^{L_k} \bigg)u_{j-1}\ast \nu(S_j) P_j(S_j) \Bigg]\\
\leq &\, \sum_{j=K_n+1}^{\lfloor n/\log n\rfloor} \sup_{z\in\Z^2} u_{j-1}(z)\cdot \mathbb{E}\Bigg[\ind{S_n=x} \bigg(\prod_{k\neq j,1\leq k\leq n} (1-u_{k-1}\ast \nu(S_k))^{L_k} \bigg) P_j(S_j) \Bigg]\\
\lesssim &\, \frac{1}{\log n\cdot \log\log n} \sum_{j=K_n+1}^{\lfloor n/\log n\rfloor} \mathbb{E}\Bigg[\ind{S_n=x} \bigg(\prod_{k\neq j,1\leq k\leq n} (1-u_{k-1}\ast \nu(S_k))^{L_k} \bigg) P_j(S_j) \Bigg],
\end{align*}
where we have used the uniform bound \eqref{upbdu} in the last line. 
Notice that by independence, we deduce from \eqref{eq:productL-lowerbdd} that
\[
    \mathbb{E}\Bigg[\ind{S_n=x} \bigg(\prod_{k\neq j,1\leq k\leq n}^n (1-u_{k-1}\ast \nu(S_k))^{L_k} \bigg) P_j(S_j) \Bigg]
    \lesssim \mathbb{E}\Bigg[\ind{S_n=x} \bigg(\prod_{k=1}^n (1-u_{k-1}\ast \nu(S_k))^{L_k} \bigg) P_j(S_j) \Bigg].
\]
Therefore, by use of \eqref{spine-Pnx} and \eqref{upbdPnx}, we know that uniformly for all $x\in \Z^2$,
\begin{align*}
&\,\Bigg|\mathbb{E}_\eqref{middlej}(x)- \sum_{j=K_n+1}^{\lfloor n/\log n\rfloor}\mathbb{E}[R_j]\mathbb{E}\Bigg[ \ind{S_n=x} \prod_{k=1}^n (1-u_{k-1}\ast \nu(S_k))^{L_k}  P_j(S_j)   \Bigg] \Bigg|\\
\lesssim  &\,\frac{1}{\log n\cdot \log\log n} \sum_{j=1}^{n} \mathbb{E}\Bigg[\ind{S_n=x} \bigg(\prod_{k=1}^n (1-u_{k-1}\ast \nu(S_k))^{L_k} \bigg) P_j(S_j) \Bigg]\\
\lesssim  &\, \frac{1}{\log n\cdot \log\log n} P_n(x) \lesssim  \frac{1}{n\log n\cdot \log\log n}.
\end{align*}

Finally, it remains to show that for any $\varepsilon\in (0,1/2)$,
\begin{multline*}
\mathbb{E}_\eqref{roughbdPnx}(x)=
    \sum_{j=K_n+1}^{\lfloor n/\log n\rfloor}\mathbb{E}[R_j]\mathbb{E}\Bigg[ \ind{S_n=x} \prod_{k=1}^n (1-u_{k-1}\ast \nu(S_k))^{L_k}  P_j(S_j)   \Bigg] \\
    = (\alpha \log n) u_n(x) + \frac{O(1)}{n(\log n)^{1/2-\varepsilon}}.
\end{multline*}
Recall that the sum on the left-hand side above is $\mathbb{E}_\eqref{roughbdPnx}(x)$. 
By considering the new event 
\[
    \overline{\mathrm{Bad}}_n:=\Big\{\Big\vert \Gamma_{K_n +1}^{\lfloor n/\log n \rfloor}(0) - \frac{5}{8\pi}\log n \Big\vert  \ge (\log n)^{\frac12+\varepsilon}\Big\}
\]
and its complementary, similarly as \eqref{Pnx-bad}, one has 
\begin{equation}
\label{eq:Pnx-bad-new}
\Big\vert \mathbb{E}_\eqref{roughbdPnx}(x) - u_n(x)  \frac{5\sigma^2}{16\pi}\log n \Big\vert \le  \frac{\sigma^2}{2}  \mathbb{E}_\eqref{eq:Pnx-bad-new}(x) +  \frac{\sigma^2}{2}  u_n(x) (\log n)^{\frac12+\varepsilon},
\end{equation}
where 
\[
\mathbb{E}_\eqref{eq:Pnx-bad-new}(x):=  \mathbb{E}\left[ \ind{S_n=x} \prod_{k=1}^n (1-u_{k-1}\ast \nu(S_k))^{L_k} \Big\vert \Gamma_{K_n+1}^{\lfloor n/\log n \rfloor}(0) - \frac{5}{8\pi}\log n \Big\vert \ind{\overline{\mathrm{Bad}}_n}\right] .
\]
Following the same arguments from \eqref{Pnx-bad} to \eqref{eq:estimate-roughbdPnx}, and using Lemma \ref{lem: SRW-sqrt}, we see that as $n\to \infty$,
\[
    \sup_{x\in \Z^2}\mathbb{E}_\eqref{eq:Pnx-bad-new}(x)=\frac{O(1)}{n(\log n)^{1/2-\varepsilon}}.
\]
Together with \eqref{upbdu} and \eqref{eq:Pnx-bad-new}, this completes the proof of \eqref{eq:Pn-un-3rd}.

\begin{rem}
Our proof of \eqref{asymp-unx} can be adapted to show that for any fixed bounded set $B\subset \Z^2$, uniformly in $x\in\Z^2$,
\begin{equation}\label{unxB}
u_n(x+B) = \frac{P_n(x)}{\alpha \log n} + \frac{o_n(1)}{n\log n} =\frac{P_n(x+B)}{\alpha |B|\log n} + \frac{o_n(1)}{n\log n}.
\end{equation}
The details are omitted.
\end{rem}

\section{Yaglom theorem: proof of Theorem \ref{thm: Yaglom}} 
\label{sec:yaglom_theorem_proof}
This section is devoted to proving Theorem \ref{thm: Yaglom}. 
Inspired by \cite{PR2011a}, our basic idea is to apply Stein's method for the exponential distribution. 
According to \cite[Theorem 3.5]{CJP}, for a non-negative random variable $W$ with a finite second moment, it holds that
\begin{equation}\label{Stein-key}
d_W(\mathscr{L}(W, \P), \mathrm{Exp}(1)) \le 2\E\big[|W^e- W|\big] + \big|\E[W]-1 \big|,
\end{equation}
where 
\begin{itemize}
\item $\mathscr{L}(W,\P)$ denotes the law of $W$ under $\P$, and $\textrm{Exp}(1)$ denotes the exponential distribution with parameter $1$;
\item $d_W(P, Q) := \sup\{|\int f \d P- \int f \d Q| \colon f\in \mathcal{F}_W\}$ denotes the Wasserstein 1-distance between two probability measures $P,Q$ on $\R$, where
\[
    \mathcal{F}_W:=\{f\colon \R\to\R \mid f\textrm{ is Lipschitz continuous with Lipschitz constant $\le 1$} \}.
\] 
\item $W^e$ denotes a random variable following the equilibrium law of $W$, meaning that for all $x\in\R_+$, 
\[
    \P(W^e \le x)= \frac{1}{\E [W]}\int_0^x \P(W>y) \d y.
\]
\end{itemize}
The inequality \eqref{Stein-key} in the case $\E[W]=1$ was initially stated in \cite[Theorem 2.1]{PR2011a}.

As mentioned in the introduction, we are going to show that, given a fixed bounded set $B\subset \Z^2$, uniformly for $z_n\in\{z\in\Z^2: \|z\| \le K\sqrt{n}\}$ with any fixed $K>0$, as $n\to \infty$, we have
\begin{equation}\label{Wdistance}
d_W\bigg( \mathscr{L} \Big(\frac{Z_n(z_n+B)}{ \alpha |B|\log n}, \P(\cdot\vert Z_n(z_n+B) \ge 1)\Big), \mathrm{Exp}(1) \bigg) = o_n(1).
\end{equation}

Notice that by \eqref{unxB}, uniformly for $z_n\in\{z\in\Z^2: \|z\| \le K\sqrt{n}\}$,
\begin{equation}
    \label{eq:mean-1}
    \E\bigg[\frac{Z_n(z_n+B)}{ \alpha |B|\log n} \Big\vert Z_n(z_n+B) \ge 1\bigg] = \frac{1}{\alpha |B|\log n} \cdot \frac{P_n(z_n+B)}{u_n(z_n+B)}= 1+o_n(1).
\end{equation}
In order to bound the Wasserstein distance between the law of $W$ and $\textrm{Exp}(1)$ by \eqref{Stein-key}, we need to couple $W^e$ and $W$ in a suitable way. 
In what follows, let us start with several preliminary facts. 
Then, in Section~\ref{sub:construction} we will use the spinal decomposition to reconstruct $\mathscr{L}( \frac{Z_n(z_n+B)}{\alpha |B| \log n}, \P(\cdot\vert Z_n(z_n+B) \ge 1) )$ and the corresponding equilibrium law. Finally, in Section~\ref{sub:coupling} we will couple them in some common probability space $(\Omega, \mathcal{F}, \P^{2\textrm{-spine}})$.

\subsection{Preliminary facts to apply Stein's method}

Let us first verify that the second moment of $Z_n(z_n+B)$ conditioned on $Z_n(z_n+B) \ge 1$ is finite. By change of measure,
\begin{align*}
\E\big[Z_n(z_n+B)^2\big] = & \,\E\Bigg[ \sum_{|u|=n} \ind{S_u\in z_n +B} Z_n(z_n+B) \Bigg]\\
=& \,\E_{\Q^*}\Bigg[ \ind{S_{w_n} \in z_n+B} \Bigg( 1+ \sum_{j=1}^n \sum_{u\in\cL(w_j)\cup\cR(w_j)} Z^u_{n-j}(z_n+B-S_u) \Bigg) \Bigg].
\end{align*}
By Proposition \ref{spine} and the fact that $\E[L(w_j)]=\E[R(w_j)]=\sigma^2/2$, the last expectation is equal to 
\[
 \mathbb{E}\left[ \ind{S_{n} \in z_n+B} \left( 1+ \sum_{j=1}^n \sigma^2 P_{n-j+1}(z_n+B - S_{j-1})\right) \right].
 \]
It follows from \eqref{upbdPnx} that
\[
\E\big[Z_n(z_n+B)^2\big] \lesssim  \left( 1+ \sum_{j=1}^n \frac{|B|}{n-j+1} \right) P_n(z_n+B)\lesssim \frac{\log n}{n} |B|^2 .
\]
On the other hand, for $\|z_n\|\le K\sqrt{n}$, then \eqref{Pnx} implies that
\[
P_n(z_n) \ge \inf_{\|z\|\le K\sqrt{n} } P_n(z) \gtrsim \frac1n, 
\]
Together with \eqref{unxB}, we see that 
\[
u_n(z_n+B) = \frac{P_n(z_n)}{\alpha\log n} + \frac{o_n(1)}{n\log n} \gtrsim \frac{1}{n\log n}.
\]
Consequently, there exists some finite constant $C>0$ such that for all sufficiently large $n$ and $\|z_n\|\le K\sqrt{n}$,
\begin{equation}\label{bd2mom}
\E\Bigg[ \left(\frac{Z_n(z_n+B)}{\alpha |B|\log n}\right)^2 \bigg\vert Z_n(z_n+B) \ge 1\Bigg] \le C.
\end{equation}

Secondly, we present a construction of the equilibrium law via size biasing for a non-negative random variable $W$ with finite mean under $\P$. Let $W^{sb}$ be a non-negative random variable following the corresponding size-biased law, that is, for any continuous and bounded function $f\colon\R_+\to\R$,
\[
\E[f(W^{sb})] = \frac{1}{\E[W]} \E[ W f(W)].
\]
Take a uniform random variable $U$ in $(0,1)$ which is independent of all other random variables. Then the product $UW^{sb}$ has the law of $W^e$, see for instance \cite[Section 2.1.1]{PR2011a}. Moreover, let $W^+$ be a random variable distributed as $\mathscr{L}(W, \P(\cdot\vert W>0))$. It is clear that $(W^+)^{sb}$ has the same law as $W^{sb}$. Consequently, 
\begin{equation}\label{uwsb}
\mathscr{L}( (W^+)^e, \P) = \mathscr{L}( UW^{sb}, \P)=\mathscr{L}(W^e, \P) .
\end{equation}
Notice that $\mathscr{L}( W, \P) = \mathscr{L}( UW^{sb}, \P)$ if and only if $W$ is exponentially distributed under $\P$.

\subsection{Construct the conditional law and the corresponding equilibrium law}
\label{sub:construction}

For notational ease, we write
\[
Y_n := \frac{Z_n(z_n+B)}{\alpha |B|\log n},
\]
and let $Y_n^+$ be a random variable distributed as $\mathscr{L}( \frac{Z_n(z_n+B)}{\alpha |B|\log n}, \P(\cdot\vert Z_n(z_n+B) \ge 1) )$ under $\P$. In view of \eqref{uwsb}, we only need to construct $UY_n^{sb}$ which has the equilibrium law of $Y_n^+$. We claim that 
\[
\mathscr{L}( Y_n, \Q^*(\cdot\vert S_{w_n}\in z_n +B)) = \mathscr{L}( Y_n^{sb}, \P).
\]
In fact, for any  continuous and bounded function $f\colon \R\to\R$, by change of measure, we have
\begin{align*}
\E\big[f(Y_n^{sb})\big] = & \,\frac{1}{\E[Z_n(z_n+B)]} \E[ Z_n(z_n + B) f(Y_n)]=  \frac{1}{P_n(z_n +B) } \E_{\Q^*}\Bigg[ \sum_{|u|=n} \frac{1}{Z_n} \ind{S_u\in z_n +B} f(Y_n) \Bigg] \\
= &\, \frac{1}{P_n(z_n +B) } \E_{\Q^*}\left[ \sum_{|u|=n} \Q^*(w_n = u \vert \nf_n) \ind{S_u\in z_n +B} f(Y_n) \right]\\
= & \,\frac{1}{\Q^*(S_{w_n}\in z_n +B)} \E_{\Q^*}\big[\ind{S_{w_n}\in z_n +B} f(Y_n)\big] = \E_{\Q^*}\big[ f(Y_n) \vert S_{w_n}\in z_n +B \big].
\end{align*}
Consider the lexicographic order $\prec$ of individuals on $\T$. For any $u$ at generation $n$, we define
\[
\mathsf{L}^{\prec u}_n(B):=|\{v\in\T\colon |v|=n, v\prec u, S_v\in B\}| \textrm{ and } \mathsf{R}^{\succ u}_n(B):=|\{v\in\T \colon |v|=n, u\prec v, S_v\in B\}|.
\]

\begin{lem}
    Under $\Q^*(\cdot\vert S_{w_n}\in z_n +B)$, conditionally on $Z_n(z_n+B)$, $\mathsf{R}_n^{\succ w_n}(z_n+B)$ is uniformly distributed in $\{0,\cdots, Z_n(z_n+B)-1\}$. Furthermore, if $U^*$ is a uniform random variable in $(0,1)$ independent of the branching random walk with a marked spine under $\Q^*$, then
\begin{equation}\label{uysb}
\mathscr{L}\bigg(  \frac{ \mathsf{R}_n^{\succ w_n}(z_n+B) + U^* }{\alpha|B| \log n} , \Q^*(\cdot\vert S_{w_n}\in z_n +B)\bigg) = \mathscr{L}( UY_n^{sb}, \P) = \mathscr{L}((Y_n^+)^e, \P).
\end{equation}
\end{lem}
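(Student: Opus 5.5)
The plan is to derive both claims from part 1 of Proposition \ref{spine}: given $\mathcal{F}_n$, the spine particle $w_n$ is uniform among the generation-$n$ individuals. Since $Z_n(z_n+B)$, the event $\{S_{w_n}\in z_n+B\}$ and $\mathsf{R}_n^{\succ w_n}(z_n+B)$ depend only on $\mathcal{F}_n$ and on the identity of $w_n$, everything reduces to a finite combinatorial computation conditional on $\mathcal{F}_n$.

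\textbf{Step 1 (the uniformity).} Fix $\mathcal{F}_n$ and suppose $N:=Z_n(z_n+B)\ge 1$. List the individuals $v_1\prec v_2\prec\cdots\prec v_N$ of generation $n$ located in $z_n+B$, in lexicographic order. The event $\{S_{w_n}\in z_n+B\}$ is the disjoint union of $\{w_n=v_i\}$ for $i=1,\dots,N$. On $\{w_n=v_i\}$ we have $\mathsf{R}_n^{\succ w_n}(z_n+B)=N-i$. By part 1 of Proposition \ref{spine}, $\Q^*(w_n=v_i\mid\mathcal{F}_n)=1/Z_n$ for every $i$. Hence
\[
\Q^*\big(\mathsf{R}_n^{\succ w_n}(z_n+B)=k,\ S_{w_n}\in z_n+B\mid\mathcal{F}_n\big)=\frac{1}{Z_n}\quad\text{for each } k\in\{0,\dots,N-1\},
\]
and $\Q^*(S_{w_n}\in z_n+B\mid\mathcal{F}_n)=N/Z_n$. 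Take a function $g(N,k)$, integrate over $\mathcal{F}_n$, and divide by $\Q^*(S_{w_n}\in z_n+B)$. This shows that, under $\Q^*(\cdot\mid S_{w_n}\in z_n+B)$, the conditional law of $\mathsf{R}_n^{\succ w_n}(z_n+B)$ given $N$ is uniform on $\{0,\dots,N-1\}$: its weight is proportional to $N\cdot\frac1N$ for each $k$. The only point needing care is conditioning on $N$ rather than on all of $\mathcal{F}_n$. This is handled by the tower property, since $N$ is $\mathcal{F}_n$-measurable.

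\textbf{Step 2 (the law of $\mathsf{R}+U^*$).} If $K$ is uniform on $\{0,\dots,N-1\}$ and $U^*$ is an independent uniform variable on $(0,1)$, then $K+U^*$ is uniform on $(0,N)$. In law this equals $UN$, with $U$ uniform on $(0,1)$ and independent of $N$. Therefore, under $\Q^*(\cdot\mid S_{w_n}\in z_n+B)$,
\[
\frac{\mathsf{R}_n^{\succ w_n}(z_n+B)+U^*}{\alpha|B|\log n}\overset{(d)}{=}U\cdot\frac{N}{\alpha|B|\log n}=U\,Y_n .
\]

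\textbf{Step 3 (identification).} The claim proved just before the lemma gives $\mathscr{L}(Y_n,\Q^*(\cdot\mid S_{w_n}\in z_n+B))=\mathscr{L}(Y_n^{sb},\P)$. Since $U$ is independent, $UY_n$ under the conditioned $\Q^*$ has the law of $UY_n^{sb}$ under $\P$. Finally, \eqref{uwsb} applied to $W=Y_n$ (which has finite mean) identifies this with $\mathscr{L}((Y_n^+)^e,\P)$, which gives \eqref{uysb}.

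No step is a real obstacle. The only subtlety is in Step 1: the uniformity must come from the spine selection $\Q^*(w_n=u\mid\mathcal{F}_n)=1/Z_n$, and not from any symmetry of the tree.
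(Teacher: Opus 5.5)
Your proof is correct and follows essentially the paper's argument. The uniform choice of the spine (part 1 of Proposition~\ref{spine}) makes $\Q^*\big(\mathsf{R}_n^{\succ w_n}(z_n+B)=i-1,\ Z_n(z_n+B)=m \mid S_{w_n}\in z_n+B\big)$ independent of $i\in\{1,\dots,m\}$, and your fact that ``discrete uniform plus $U^*$ equals $U\cdot N$ in law'' is exactly the paper's identity $\sum_{i=1}^m\int_0^1 f(i-1+u)\,\mathrm{d}u=m\int_0^1 f(mu)\,\mathrm{d}u$; the only difference is that you reuse the already established identity $\mathscr{L}(Y_n,\Q^*(\cdot\mid S_{w_n}\in z_n+B))=\mathscr{L}(Y_n^{sb},\P)$ rather than redoing the change of measure inline, which is a harmless repackaging.
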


\begin{proof}
For every $m\ge 1$ and every $1\le i\le m$, observe that
\begin{align*}
&\Q^*\big( \mathsf{R}_n^{\succ w_n}(z_n+B) = i-1, Z_n(z_n+B) = m \vert S_{w_n}\in z_n +B\big) \\
= & \, \frac{1}{P_n(z_n +B) }\E_{\Q^*}\left[ \sum_{|u|=n} \ind{w_n = u, S_u\in z_n+B} \ind{ \mathsf{R}_n^{\succ u}(z_n+B) = i-1, Z_n(z_n+B) = m}\right]\\
= & \, \frac{1}{P_n(z_n +B) }\E_{\Q^*}\left[ \sum_{|u|=n} \frac{1}{Z_n}\ind{ S_u\in z_n+B} \ind{\mathsf{R}_n^{\succ u}(z_n+B) = i-1}\ind{Z_n(z_n+B)=m}\right]\\
=&  \, \frac{1}{P_n(z_n +B) }\E_{\Q^*}\left[\frac{1}{Z_n} \ind{Z_n(z_n+B)=m}\right],
\end{align*}
which is independent of $i$.
Consequently, for any continuous and bounded function $f$, 
\begin{align*}
&\E_{\Q^*}\big[ f(  \mathsf{R}_n^{\succ w_n}(z_n+B) +  U^* ) \vert S_{w_n}\in z_n +B\big] \\
= \,&\sum_{i\ge1}\sum_{m\ge i} \int_0^1 f(i-1+u) \Q^*\big(  \mathsf{R}_n^{\succ w_n}(z_n+B) = i-1, Z_n(z_n+B) = m \vert S_{w_n}\in z_n +B\big) \d u\\
=\, & \sum_{m=1}^\infty  \frac{1}{P_n(z_n +B) }\E_{\Q^*}\left[\frac{1}{Z_n} \ind{Z_n(z_n+B)=m}\right] \sum_{i=1}^m \int_0^1 f(i-1+u)\d u .
\end{align*}
Because $\sum_{i=1}^m \int_0^1 f(i-1+u)\d u = \int_0^m f(r)\d r =m \int_0^1 f(m u) \d u$, 
\begin{align*}
&\E_{\Q^*}\big[ f(  \mathsf{R}_n^{\succ w_n}(z_n+B) +  U^* ) \vert S_{w_n}\in z_n +B\big] \\
=\,&  \frac{1}{P_n(z_n +B) }\int_0^1 \E_{\Q^*}\left[\sum_{m=1}^\infty \frac{f(mu)m}{Z_n}\ind{Z_n(z_n+B)=m}\right] \d u\\
=\,&  \frac{1}{P_n(z_n +B) } \int_0^1 \E_{\Q^*}\left[ \frac{f( Z_n(z_n+B)u)Z_n(z_n+B)}{Z_n} \right] \d u.
\end{align*}
By change of measure, $\E_{\Q^*}\left[ \frac{f( Z_n(z_n+B)u)Z_n(z_n+B)}{Z_n} \right] = \E\left[ f(u Z_n(z_n+B))Z_n(z_n+B) \right]$. We thus end up with 
\begin{align*}
&\E_{\Q^*}\big[ f(  \mathsf{R}_n^{\succ w_n}(z_n+B) +U^* ) \vert S_{w_n}\in z_n +B\big] \\
=\, &\frac{1}{P_n(z_n +B) } \int_0^1 \E\left[ f( Z_n(z_n+B)u) Z_n(z_n+B)\right] \d u = \E\left[ f(UZ_n^{sb}(z_n +B) ) \right].
\end{align*}
Then \eqref{uwsb} allows us to conclude. 
\end{proof} 

Next, let us reconstruct $\mathscr{L}(Y_n^+, \P)$ from the branching random walk with a marked ray. For non-negative random variables, it suffices to consider their Laplace transform. For any $\lambda>0$, 
\begin{align*}
\E\left[ e^{-\lambda Y_n^+ } \right] = & \E\left[ \exp\Big(\!-\lambda \frac{Z_n(z_n+B)}{\alpha |B|\log n}\Big) \Big\vert Z_n(z_n+B)\ge 1\right] \\
= & \frac{1}{\P(Z_n(z_n+B)\ge 1 ) } \E\left[ \exp\Big(\!-\lambda \frac{Z_n(z_n+B)}{\alpha |B|\log n}\Big) \ind{Z_n(z_n+B)\ge 1}\right].
\end{align*}
Similarly to the derivation of \eqref{changeofm-pnx}, it follows from the change of measure that 
\begin{align*}
&\E\left[ e^{-\lambda Y_n^+ } \right] =  \frac{\E_{\Q^*}\left[ \exp\Big(-\lambda \frac{ 1+\mathsf{R}_n^{\succ w_n}(z_n +B)}{\alpha|B|\log n} \Big)\ind{ \mathsf{L}_n^{\prec w_n}(z_n+B)=0, S_{w_n}\in z_n +B)}\right]}{\Q^*(\mathsf{L}_n^{\prec w_n}(z_n+B)=0, S_{w_n}\in z_n +B)}  \\
=& \sum_{(x_1,\cdots,x_n)\in \Z^2 \times \cdots\times\Z^2\times (z_n+B)} \frac{ \Q^*( \mathsf{L}_n^{\prec w_n}(z_n+B)=0, (S_{w_k})_{1\le k\le n} =(x_k)_{1\le k\le n}) }{ \Q^*(\mathsf{L}_n^{\prec w_n}(z_n+B)=0, S_{w_n}\in z_n +B) } \\
&  \hspace{2cm}\times \E_{\Q^*}\left[ \exp\Big(\!-\lambda \frac{1+ \mathsf{R}_n^{\succ w_n}(z_n +B) }{\alpha|B|\log n} \Big) \Big\vert \,\mathsf{L}_n^{\prec w_n}(z_n+B)=0, (S_{w_k})_{1\le k\le n} =(x_k)_{1\le k\le n} \right].
\end{align*}
On the one hand, we have
\begin{align*}
 &\frac{ \Q^*( \mathsf{L}_n^{\prec w_n}(z_n+B)=0, (S_{w_k})_{1\le k\le n} =(x_k)_{1\le k\le n}) }{ \Q^*(\mathsf{L}_n^{\prec w_n}(z_n+B)=0, S_{w_n}\in z_n +B) } \\
 =\,& \Q^*\big( (S_{w_k})_{1\le k\le n} =(x_k)_{1\le k\le n}) \vert \mathsf{L}_n^{\prec w_n}(z_n+B)=0, S_{w_n}\in z_n +B \big). 
\end{align*}
On the other hand,  we set $\lambda_n := \frac{\lambda}{\alpha |B|\log n}$ and note that
\begin{align}
& \mathsf{R}_n^{\succ w_n}(z_n +B) =  \sum_{j=1}^n \mathsf{R}_{n,j}(z_n+B), \quad \mathsf{L}_n^{\prec w_n}(z_n +B) =   \sum_{j=1}^n \mathsf{L}_{n,j}(z_n+B)\quad \textrm{ with }\label{LRn}\\
&  \mathsf{R}_{n,j}(z_n+B):= \sum_{u\in\cR(w_j)}Z_{n-j}^u(z_n+B-S_u), \quad \mathsf{L}_{n,j}(z_n+B):=  \sum_{u\in\cL(w_j)}Z_{n-j}^u(z_n+B-S_u). \label{LRnj}
\end{align}
For each $u$ in $\cR(w_j)$ or $\cL(w_j)$, let $X_u:=S_u-S_{w_{j-1}}$ denote the spatial jump from $w_{j-1}$ to $u$.
It follows from independence that
\begin{align*}
&\E_{\Q^*}\left[ \exp\Big(\!-\lambda \frac{1+ \mathsf{R}_n^{\succ w_n}(z_n +B) }{\alpha|B|\log n} \Big) \Big\vert \,\mathsf{L}_n^{\prec w_n}(z_n+B)=0, (S_{w_k})_{1\le k\le n} =(x_k)_{1\le k\le n} \right] \\
=\,& e^{-\lambda_n} \prod_{j=1}^n \E_{\Q^*_{(x_k)_{0\le k\le n}}}\Bigg[ e^{-\lambda_n \sum_{u\in\cR(w_j)} Z^u_{n-j}(z_n+B-x_{j-1}-X_u) } \bigg\vert \sum_{v\in\cL(w_j)} Z^v_{n-j} (z_n+B-x_{j-1}-X_v) =0 \Bigg]\\
=\, & e^{-\lambda_n} \prod_{j=1}^n \E_{\Q^*_{(x_k)_{0\le k\le n}}} \left[ e^{-\lambda_n \mathsf{R}_{n,j}(z_n+B) } \big\vert \mathsf{L}_{n,j}(z_n+B) =0 \right],
\end{align*}
where $\Q^*_{(x_k)_{0\le k\le n}}(\cdot ):= \Q^*(\cdot \vert (S_{w_k})_{1\le k\le n} = (x_{k})_{1\le k\le n})$ and $x_0=0$. 

Therefore, we end up with
\begin{align}\label{lawofW}
\E\left[ e^{-\lambda Y_n^+ } \right] = &\sum_{x_1,\cdots, x_{n-1}\in\Z^2, x_n\in z_n+B} \Q^*\big( (S_{w_k})_{1\le k\le n} =(x_k)_{1\le k\le n}) \vert \mathsf{L}_n^{\prec w_n}(z_n+B)=0, S_{w_n}\in z_n +B \big) \nonumber\\
&\qquad\qquad\qquad \times  e^{-\lambda_n} \prod_{j=1}^n \E_{\Q^*_{(x_k)_{0\le k\le n}}} \left[ e^{-\lambda_n \mathsf{R}_{n,j}(z_n+B) } \big\vert \mathsf{L}_{n,j}(z_n+B) =0 \right].
\end{align}
This identity leads to the following construction of $\P^{\textrm{spine-I}}$:
\begin{enumerate}
\item We first take a finite ray $(w_0,\cdots, w_n)$ to be the spine up to the $n$-th generation. It starts at the root $w_0=\varnothing$.
\item Take a random vector $(S^I_{w_k})_{0\le k\le n}$ with $S^I_{w_0}=x_0=0$ to be the corresponding path of the spine in $\Z^2$. It satisfies that for any $\{x_{k},1\le k\le n\}\subset \Z^2$,
\begin{multline}\label{spine-1}
\P^{\textrm{spine-I}}\big( (S^I_{w_k})_{0\le k\le n} = (x_k)_{0\le k\le n}\big)  \\
= \Q^*\big( (S_{w_k})_{0\le k\le n} =(x_k)_{0\le k\le n}) \vert \mathsf{L}_n^{< w_n}(z_n+B)=0, S_{w_n}\in z_n +B \big).
\end{multline}
\item Let $(L^I(w_j), R^I(w_j))_{1\le j\le n}$ be i.i.d.~random vectors under $\P^{\textrm{spine-I}}$, distributed as $\mathscr{L}((L_1,R_1),\P)$ and independent of $(S^I_{w_k})_{1\le k\le n}$. Conditionally on the path $(S^I_{w_k})_{0\le k\le n} = (x_k)_{0\le k\le n}$, to each $w_j$ on the spine we attach $L^I(w_j)$ siblings on the left and $R^I(w_j)$ siblings on the right, each sibling $u$ of $w_j$ makes an independent jump $X^I_u$ from $x_{j-1}$ according to the jump law $\nu$. We set $S^I_u = x_{j-1} +X^I_u$. Again, we denote the set of the left siblings and the right siblings of $w_j$ by $\cL^I(w_j)$ and $\cR^I(w_j)$ respectively.
\item For each $u\in\cup_{j=1}^n\cL^I(w_j)\cup\cR^I(w_j)$, run under $\P^{\textrm{spine-I}}$ an independent $\P$-distributed CBRW starting from $S^I_u$. 
Similarly as in \eqref{LRnj}, let $\mathsf{L}^I_{n,j}(z_n+B)$ stand for the number of individuals at generation~$n$ which are located in $z_n+B$ and are descendants of $ \cL^I(w_j)$. In the same way we define $\mathsf{R}^I_{n,j}(z_n+B)$ by looking at the descendants of $\cR^I(w_j)$ at generation $n$. 
\item Given the path $(S^I_{w_k})_{0\le k\le n} = (x_k)_{0\le k\le n}$, take $\{\mathsf{R}'_{n,j}(z_n+B)\}_{1\le j\le n}$ to be independent random variables which are independent of all the other random variables such that $\mathsf{R}_{n,j}'(z_n+B)$ is distributed as $(\mathsf{R}^I_{n,j}(z_n+B)|\mathsf{L}^I_{n,j}(z_n+B)=0)$.
\item We define $\mathsf{R}^*_{n,j}(z_n+B) := \mathsf{R}^I_{n,j}(z_n + B) \ind{\mathsf{L}^I_{n,j}(z_n+B) = 0} + \mathsf{R}'_{n,j}(z_n +B) \ind{ \mathsf{L}^I_{n,j}(z_n+B) >0}$ and
\begin{equation}\label{modifiedRn}
\mathsf{R}^*_n(z_n+B):=\sum_{j=1}^n \mathsf{R}^*_{n,j}(z_n+B).
\end{equation}
\end{enumerate}
According to the construction above, $\mathsf{R}^*_{n,j}(z_n+B)$ has the same distribution as $\mathsf{R}^I_{n,j}(z_n+B)$ conditioned on $\mathsf{L}^I_{n,j}(z_n+B)=0$.
So we obtain from \eqref{lawofW} that 
\begin{equation}\label{w-spine}
\mathscr{L}(Y_n^+, \P)=\mathscr{L}\bigg( \frac{1+\mathsf{R}^*_n(z_n+B)}{\alpha|B|\log n} ,\P^{\textrm{spine-I}} \bigg).
\end{equation}
To summarize, we have constructed the law of $Y_n^+$ and its equilibrium law in \eqref{w-spine} and \eqref{uysb} respectively.

\subsection{Coupling of \texorpdfstring{$Y_n^+$ and $(Y_n^+)^e$}{}}
\label{sub:coupling}

In this section, we couple  
\[
    \mathscr{L}\bigg( \frac{1+\mathsf{R}^*_n(z_n+B)}{\alpha|B|\log n} ,\P^{\textrm{spine-I}} \bigg) \textrm{ and } \mathscr{L}\bigg(  \frac{ \mathsf{R}_n^{\succ w_n}(z_n+B) + U^* }{\alpha|B| \log n} , \Q^*(\cdot\vert S_{w_n}\in z_n +B)\bigg) 
\]
in the same probability space. Note that the difference between $\P^{\textrm{spine-I}}$ and $\Q^*(\cdot\vert S_{w_n}\in z_n +B) $ comes from the different trajectories of the spine, and also from the difference between $\mathsf{R}_n^*$ and $\mathsf{R}^{\succ w_n}_n$.

To control the first difference, 
fix $\delta\in(0,1/2)$, and
let $(S^{I\!I}_{w_k})_{0\le k\le n}$ be a random vector under $\P$, distributed as $\Q^*( (S_{w_k})_{0\le k\le n} \in\cdot \vert S_{w_n}\in z_n +B)$. We are going to study the total variation distance between the law of $(S^I_{w_k})_{0\le k\le m_n}$ and that of $(S^{I\!I}_{w_k})_{0\le k\le m_n}$ with
\begin{align}\label{eq:mn}
m_n:= n- \lfloor n^\delta\rfloor.
\end{align}
Denote this total variation distance by 
\begin{equation*}
D_{TV}(m_n):= \| \mathscr{L}( (S^I_{w_k})_{k\le m_n},\P^{\textrm{spine-I}}) - \mathscr{L}( (S^{I\!I}_{w_k})_{k\le m_n},\P)  \|_{TV}.
\end{equation*}
We claim that uniformly in $\|z_n\|\le K\sqrt{n}$,
\begin{equation}\label{bdDTVspine}
D_{TV}(m_n) = o_n(1).
\end{equation} 
The proof of \eqref{bdDTVspine} is postponed to Section \ref{boundDTV}.

Now we are ready to couple $Y_n^+$ and $(Y_n^+)^e$ under the same probability measure $\P^{\textrm{2-spine}}$. 
First, thanks to the optimal coupling for the total variation distance, we can couple two trajectories $(S_{w_k^I})_{k\le n}$ and $(S_{w_k^{I\!I}})_{k\le n}$ of the spines $(w_k^I)_{k\le n}$ and $(w_k^{I\!I})_{k\le n}$ under $\P^{\textrm{2-spine}}$ such that
\begin{enumerate}
\item $\mathscr{L}( (S_{w_k^I})_{k\le n}, \P^{\textrm{2-spine}}) = \mathscr{L}( (S^I_{w_k})_{k\le n},\P^{\textrm{spine-I}}) $,
\item $\mathscr{L}( (S_{w_k^{I\!I}})_{k\le n}, \P^{\textrm{2-spine}}) =  \mathscr{L}( (S^{I\!I}_{w_k})_{k\le n},\P) $,
\item $\P^{\textrm{2-spine}}\left( (S_{w_k^I})_{k\le m_n}\neq (S_{w_k^{I\!I}})_{k\le m_n} \right) = D_{TV}(m_n) = o_n(1)$.
\end{enumerate}
Both trajectories $(S_{w_k^I})_{k\le n}$ and $(S_{w_k^{I\!I}})_{k\le n}$ start from $0\in\Z^2$, and end at some site in $z_n+B$.
Let
\[
\mathcal{S}^{\textrm{2-spine}}:=\max\big\{0\le m \le n \vert (S_{w_k^I})_{k\le m} = (S_{w_k^{I\!I}})_{k\le m} \big\} \in [0,n]\cap \mathbb{Z}
\]
be the separating time of the two spines. Then uniformly in $\|z_n\|\le K\sqrt{n}$,
\begin{equation}\label{separating}
\P^{\textrm{2-spine}}\left( \mathcal{S}^{\textrm{2-spine}} \ge m_n \right) = 1-o_n(1).
\end{equation}

Secondly, let us construct the siblings of $(w_k^I)_{1\le k\le n}$ and $(w_k^{I\!I})_{1\le k\le n}$ under $\P^{\textrm{2-spine}}$. Given the trajectories $(S_{w_k^I})_{k\le n}$ and $(S_{w_k^{I\!I}})_{k\le n}$, 
\begin{itemize}
    \item for all $0\leq k\leq \mathcal{S}^{\textrm{2-spine}}$, we do not distinguish $w_k^I$ and $w_k^{I\!I}$ and let them share the same siblings on the left and right of the spine, denoted by $\cL_k$ and $\cR_k$; 
    \item while for $\mathcal{S}^{\textrm{2-spine}} <  k \le n$, each of $w_k^I$ and $w_k^{I\!I}$ has its own left and right siblings, denoted by $(\cL_k^I, \cR_k^I)$ and $(\cL^{I\!I}_k, \cR_k^{I\!I})$ respectively, in such a way that $(|\cL_k|, |\cR_k|)_{k\le \mathcal{S}^{\textrm{2-spine}}}$, $(|\cL_k^I|, |\cR_k^I|)_{\mathcal{S}^{\textrm{2-spine}} <  k \le n}$,  $(|\cL_k^{I\!I}|, |\cR_k^{I\!I}|)_{\mathcal{S}^{\textrm{2-spine}} <  k \le n}$ are all i.i.d.~random vectors distributed as $\mathscr{L}((L_1,R_1),\P)$. 
\end{itemize}
Next, the spatial positions of those siblings are as follows:
\begin{enumerate}
\item for $1\le k\le \mathcal{S}^{\textrm{2-spine}} $ and $u\in \cL_k\cup\cR_k$, let $X_u$ be an independent random jump distributed as $\nu$, and let the position of $u$ be $S_u = X_u + S_{w^I_{k-1}}$;
\item for $\mathcal{S}^{\textrm{2-spine}} <  k \le n$ and $u \in \cL_k^I\cup\cR^I_k$ (or $u\in \cL_k^{I\!I}\cup\cR_k^{I\!I}$), let $X_u$ be an independent  jump distributed as $\nu$, and let the position of $u$ be $S_u = X_u + S_{w^I_{k-1}}$ (or $S_u= X_u + S_{w_{k-1}^{I\!I}}$ respectively);
\item to any $u$ that belongs to
\[
\bigg(\bigcup_{1\le k\le \mathcal{S}^{\textrm{2-spine}}  } (\cL_k\cup\cR_k)\bigg) \bigcup \bigg(\bigcup_{ \mathcal{S}^{\textrm{2-spine}} <  k \le n }(\cL_k^I\cup\cR_k^I)\cup(\cL_k^{I\!I}\cup\cR_k^{I\!I})\bigg),
\] 
attach an independent $\P$-distributed CBRW starting from $S_u$.
\end{enumerate}
\begin{figure}
    \centering
    \includegraphics[width=\linewidth]{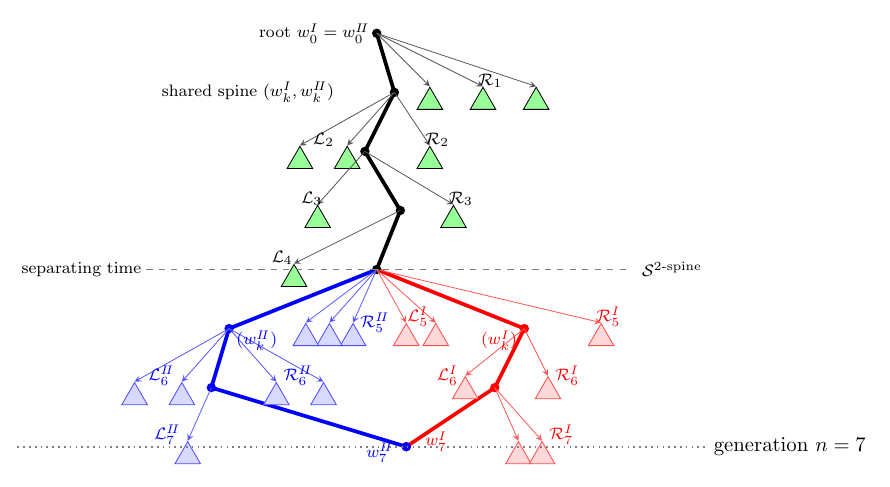}
    \caption{Construction of branching random walk with 2 spines.}
    \label{fig:placeholder}
\end{figure}

Similarly as above, we count the number of individuals at the $n$-th generation which are located in $z_n+B$ and are descendants of the left and the right siblings of $w^I_j$ (or that of $w^{I\!I}_j$). 
Such numbers are still denoted as $\mathsf{L}_{n,j}^I(z_n+B)$ and $\mathsf{R}_{n,j}^I(z_n+B)$ (or $(\mathsf{L}_{n,j}^{I\!I}(z_n+B)$ and $\mathsf{R}^{I\!I}_{n,j}(z_n+B)$ respectively).  
Before the separation, for all $1\le j\le \mathcal{S}^{\textrm{2-spine}}$,
\[
(\mathsf{L}_{n,j}^I(z_n+B), \mathsf{R}_{n,j}^I(z_n+B))=(\mathsf{L}_{n,j}^{I\!I}(z_n+B),  \mathsf{R}^{I\!I}_{n,j}(z_n+B)).
\]
Moreover, let
\[
\mathsf{R}^I_n(z_n+B):= \sum_{j=1}^n \mathsf{R}_{n,j}^I(z_n+B) \textrm{ and } \mathsf{R}^{I\!I}_n(z_n+B):= \sum_{j=1}^n \mathsf{R}_{n,j}^{I\!I}(z_n+B).
\]
Under  $\P^{\textrm{2-spine}}$, let $U^{I\!I}$ be an independent uniform random variable in $(0,1)$. Then, \eqref{uysb} yields that 
\[
\mathscr{L}\bigg( \frac{\mathsf{R}^{I\!I}_n(z_n+B) + U^{I\!I}}{\alpha|B| \log n}, \P^{\textrm{2-spine}}\bigg) = \mathscr{L}( (Y_n^+)^e, \P).
\]
On the other hand, with a little abuse of notation, we define $\{\mathsf{R}'_{n,j}(z_n+B)\}_{1\le j\le n}$ in the same way under $\P^{\textrm{2-spine}}$ as in the construction of $\P^{\textrm{spine-I}}$.
Similarly to \eqref{modifiedRn}, let 
\[
\mathsf{R}^*_{n,j}(z_n+B) := \mathsf{R}^I_{n,j}(z_n + B) \ind{\mathsf{L}^I_{n,j}(z_n+B) = 0} + \mathsf{R}'_{n,j}(z_n +B) \ind{ \mathsf{L}^I_{n,j}(z_n+B) >0}
\]
and $\mathsf{R}^*_n(z_n+B):=\sum_{j=1}^n \mathsf{R}^*_{n,j}(z_n+B)$.  Then, we obtain the analog of \eqref{w-spine} that
\[
\mathscr{L}\bigg( \frac{1+\mathsf{R}^{*}_n(z_n+B)}{\alpha |B|\log n}, \P^{\textrm{2-spine}}\bigg) = \mathscr{L}( Y_n^+, \P).
\]
Together with \eqref{uysb}, it follows from Stein's method \eqref{Stein-key} that
\begin{align}\label{Steinbd}
&d_W(  \mathscr{L}( Y_n^+, \P), \mathrm{Exp}(1))\le  2 \E_{\P^{\textrm{2-spine}}} \left[ \left| \frac{\mathsf{R}^{I\!I}_n(z_n+B) + U^{I\!I}}{\alpha|B| \log n} -   \frac{\mathsf{R}^{*}_n(z_n+B) + 1}{\alpha|B| \log n} \right |\right] +|\E[Y_n^+]-1|\\
 &\qquad \lesssim  \frac{1}{\log n} + |\E[Y_n^+]-1|  +  \E_{\P^{\textrm{2-spine}}} \left[ \left| \frac{\mathsf{R}^{I\!I}_n(z_n+B) }{\alpha|B| \log n} -   \frac{\mathsf{R}^{*}_n(z_n+B) }{\alpha|B| \log n} \right |\ind{ \mathcal{S}^{\textrm{2-spine}} < m_n }\right]\nonumber\\
&\qquad \quad +\E_{\P^{\textrm{2-spine}}} \left[ \left| \frac{\mathsf{R}^{I\!I}_n(z_n+B) }{\alpha|B| \log n} -   \frac{\mathsf{R}^{*}_n(z_n+B) }{\alpha |B|\log n} \right |\ind{ \mathcal{S}^{\textrm{2-spine}} \ge m_n }\right].\nonumber
\end{align}
By \eqref{eq:mean-1}, $|\E[Y_n^+]-1|=o_n(1)$. 
Then, by Cauchy--Schwarz inequality, \eqref{bd2mom} and \eqref{separating}, we know that uniformly in $\|z_n\|\le K\sqrt{n}$,
\begin{align*}
\E_{ \P^{\textrm{2-spine}}} \left[  \frac{\mathsf{R}^{*}_n(z_n+B) }{\alpha|B| \log n}  \ind{ \mathcal{S}^{\textrm{2-spine}} < m_n }\right] \le &\,\E_{\P^{\textrm{2-spine}}} \left[  \left(\frac{\mathsf{R}^{*}_n(z_n+B) }{\alpha|B| \log n} \right)^2 \right]^{1/2} \P^{\textrm{2-spine}}( \mathcal{S}^{\textrm{2-spine}} < m_n)^{1/2}\\
\le &\, \E[(Y_n^+)^2]^{1/2} \P^{\textrm{2-spine}}( \mathcal{S}^{\textrm{2-spine}} < m_n)^{1/2}=o_n(1).
\end{align*}
On the other hand, by conditioning on the two spines,
\begin{align}
 &\,\E_{\P^{\textrm{2-spine}}} \left[  \frac{\mathsf{R}^{I\!I}_n(z_n+B)}{\alpha|B| \log n} \ind{ \mathcal{S}^{\textrm{2-spine}} < m_n }\right]\label{eq:condition-2spine}\\
 =& \, \E_{\P^{\textrm{2-spine}}} \left[  \E_{\P^{\textrm{2-spine}}} \left[ \frac{\sum_{j=1}^n \mathsf{R}^{I\!I}_{n,j}(z_n+B)}{\alpha|B| \log n} \Big\vert (S_{w_k^I}, S_{w_k^{I\!I}})_{k\le n} \right] \ind{ \mathcal{S}^{\textrm{2-spine}} < m_n }\right] \nonumber\\
 = & \, \E_{\P^{\textrm{2-spine}}} \left[ \frac{ \sum_{j=1}^n \E[R_1] P_{n-j+1}(z_n+B-S_{w_{j-1}^{I\!I}}) }{\alpha|B|\log n} \ind{ \mathcal{S}^{\textrm{2-spine}} < m_n }\right].  \nonumber
\end{align}
Note that for all $n\ge2$, by \eqref{upbdPnx},
\[
\sum_{j=1}^n \E[R_1] P_{n-j+1}(z_n+B-S_{w_{j-1}^{I\!I}}) \lesssim \sum_{j=1}^n \sup_x P_j(x) \lesssim \log n.
\]
Consequently, we obtain that uniformly in $\|z_n\|\le K\sqrt{n}$,
\begin{align*}
 &\E_{\P^{\textrm{2-spine}}} \left[  \frac{\mathsf{R}^{I\!I}_n(z_n+B)}{\alpha|B| \log n} \ind{ \mathcal{S}^{\textrm{2-spine}} < m_n }\right] 
 \lesssim  \P^{\textrm{2-spine}}( \mathcal{S}^{\textrm{2-spine}} < m_n) =o_n(1).
\end{align*}
Putting the previous estimates together, we see that for the early separating scenario, uniformly in $\|z_n\|\le K\sqrt{n}$,
\begin{equation}\label{earlyS}
\E_{\P^{\textrm{2-spine}}} \left[ \left| \frac{\mathsf{R}^{I\!I}_n(z_n+B)}{\alpha|B| \log n} -   \frac{\mathsf{R}^{*}_n(z_n+B) }{\alpha|B| \log n} \right |\ind{ \mathcal{S}^{\textrm{2-spine}} < m_n }\right]  =o_n(1).
\end{equation}

It remains to bound $\E_{\P^{\textrm{2-spine}}} \left[ \left| \frac{\mathsf{R}^{I\!I}_n(z_n+B) }{\alpha|B| \log n} -   \frac{\mathsf{R}^{*}_n(z_n+B) }{\alpha|B| \log n} \right | \ind{ \mathcal{S}^{\textrm{2-spine}} \ge m_n }\right]$. 
According to the construction of $\mathsf{R}^{I\!I}_{n,j}(z_n+B)$ and $\mathsf{R}^*_{n,j}(z_n+B)$ under $\P^{\textrm{2-spine}}$, we have
\begin{align}
& \left| \frac{\mathsf{R}^{I\!I}_n(z_n+B) }{\alpha|B| \log n} -   \frac{\mathsf{R}^{*}_n(z_n+B) }{\alpha|B| \log n} \right| \ind{ \mathcal{S}^{\textrm{2-spine}} \ge m_n } \nonumber\\
\le \,& \frac{1}{\alpha|B|\log n}  \sum_{j=1}^n \left| \mathsf{R}^{I\!I}_{n,j}(z_n+B) - \mathsf{R}^*_{n,j}(z_n+B)  \right| \ind{ \mathcal{S}^{\textrm{2-spine}} \ge m_n }\nonumber\\
\le \,&  \frac{1}{\alpha|B|\log n} \sum_{j=1}^{ m_n }  \left| \mathsf{R}^I_{n,j}(z_n+B)- \mathsf{R}'_{n,j}(z_n+B)\right| \ind{ \mathsf{L}^I_{n,j}(z_n+B) \ge 1 }  \nonumber\\
&\qquad \qquad +   \frac{1}{\alpha|B|\log n} \sum_{j=1+ m_n }^{n} \left| \mathsf{R}^{I\!I}_{n,j}(z_n+B) - \mathsf{R}^*_{n,j}(z_n+B) \right|=: \Sigma_\eqref{lateseparating}(1) + \Sigma_\eqref{lateseparating}(2).\label{lateseparating}
\end{align}
For the first term $\Sigma_\eqref{lateseparating}(1)$ on the right hand side of \eqref{lateseparating}, 
\begin{align}\label{bdlateS1}
&\E_{\P^{\textrm{2-spine}}} \left[ \Sigma_\eqref{lateseparating}(1)\right] 
\lesssim \frac{1}{\log n} \sum_{j=1}^{m_n} \E_{\P^{\textrm{2-spine}}} \left[ \mathsf{R}^I_{n,j}(z_n+B)\ind{ \mathsf{L}^I_{n,j}(z_n+B) \ge 1 } \right] \\
&\hspace{5cm} + \frac{1}{\log n} \sum_{j=1}^{m_n} \E_{\P^{\textrm{2-spine}}} \left[ \mathsf{R}'_{n,j}(z_n+B)\ind{ \mathsf{L}^I_{n,j}(z_n+B) \ge 1 } \right] . \nonumber
\end{align}
For convenience, we define the conditional probability measure
\[
\P^{\textrm{2-spine}}_{\vert_S}(\cdot) := \P^{\textrm{2-spine}}\big(\cdot \vert (S_{w_k^I}, S_{w_k^{I\!I}})_{k\le n}  \big).
\]
Then, for $1\le j\le m_n$, similarly as in \eqref{eq:condition-2spine},
\begin{align*}
&\E_{\P^{\textrm{2-spine}}} \left[ \mathsf{R}^I_{n,j}(z_n+B)\ind{ \mathsf{L}^I_{n,j}(z_n+B) \ge 1 } \right] \\
=\,& \E_{\P^{\textrm{2-spine}}} \left[  \E_{\P^{\textrm{2-spine}}} \left[ \mathsf{R}^I_{n,j}(z_n+B)\ind{ \mathsf{L}^I_{n,j}(z_n+B) \ge 1 } \Big\vert (S_{w_k^I}, S_{w_k^{I\!I}})_{k\le n} \right]\right] \\
=\, &  \E_{\P^{\textrm{2-spine}}} \left[  P_{n-j+1}(z_n+B-S_{w_{j-1}^I}) \times  \E_{\P^{\textrm{2-spine}}_{\vert_S}}\left[ |\overline{\cR_j}| \left(1- \left(1- u_{n-j}\ast\nu(z_n+B-S_{w_{j-1}^I}) \right)^{|\overline{\cL_j}|}\right)\right] \right].
\end{align*}
In the last expectation above, $(|\overline{\cL_j}|,|\overline{\cR_j}|)$ stands for $(|\cL_j|, |\cR_j|)$ (if $j\leq \mathcal{S}^{\textrm{2-spine}}$) or for $(|\cL^I_j|, |\cR^I_j|)$ (if $j> \mathcal{S}^{\textrm{2-spine}}$). In both cases, $(|\overline{\cL_j}|,|\overline{\cR_j}|)$ is distributed as $\mathscr{L}((L_1,R_1),\P)$, and is independent of $(S_{w_k^I}, S_{w_k^{I\!I}})_{k\le n}$.

Recall from \eqref{eq:mn} that $n-m_n=\lfloor n^\delta\rfloor$.
By \eqref{upbdu}, we see that uniformly for $1\le j\le m_n$ and for $z_n\in\Z^2$, 
\[
    u_{n-j}\ast\nu(z_n+B-S_{w_{j-1}^I}) = o_n(1).
\] 
Moreover,
\[
\E_{\P^{\textrm{2-spine}}_{\vert_S}}\left[ |\overline{\cR_j}| (1-(1-h)^{|\overline{\cL_j}|})\right] \le \E_{\P^{\textrm{2-spine}}_{\vert_S}}\big[|\overline{\cR_j}| (1\wedge h|\overline{\cL_j}|)\big]=\E[R_1(1\wedge hL_1)],
\]
which goes to zero when $h\to 0$ by dominated convergence theorem.
It follows that 
\[
    \E_{\P^{\textrm{2-spine}}_{\vert_S}}\left[ |\overline{\cR_j}| \left(1- \left(1- u_{n-j}\ast\nu(z_n+B-S_{w_{j-1}^I}) \right)^{|\overline{\cL_j}|}\right)\right]=o_n(1)
\]
uniformly for $1\le j\le m_n$ and $z_n\in\Z^2$. 
From here up to \eqref{bdR'nj}, all quantities related to $j$ that are shown to be $o_n(1)$ are supposed to be uniform for $1\le j\le m_n$. 
By \eqref{upbdPnx}, we deduce that uniformly in $z_n\in\Z^2$,
\begin{align}\label{bdRnj}
&\E_{\P^{\textrm{2-spine}}} \left[ \mathsf{R}^I_{n,j}(z_n+B)\ind{ \mathsf{L}^I_{n,j}(z_n+B) \ge 1 } \right] = \frac{o_n(1)}{ n-j +1}.
\end{align}
On the other hand, by the construction of $\mathsf{R}'_{n,j}(z_n+B)$, 
\begin{align*}
&\E_{\P^{\textrm{2-spine}}} \left[ \mathsf{R}'_{n,j}(z_n+B)\ind{ \mathsf{L}^I_{n,j}(z_n+B) \ge 1 } \right] \\
=\,& \E_{\P^{\textrm{2-spine}}} \left[ \E_{ \P^{\textrm{2-spine}}_{\vert_S} } \left[ \mathsf{R}^I_{n,j}(z_n+B) \vert \mathsf{L}^I_{n,j}(z_n+B)=0 \right] \times\P^{\textrm{2-spine}}_{\vert_S} \left(  \mathsf{L}^I_{n,j}(z_n+B) \ge 1 \right) \right]\\
=\,& \E_{\P^{\textrm{2-spine}}} \left[ \frac{ P_{n-j+1}(z_n+B-S_{w^I_{j-1}})\E[ R_1(1-u_{n-j}\ast\nu(z_n+B-S_{w^I_{j-1}}))^{L_1}]}{ \E[ (1-u_{n-j}\ast\nu(z_n+B-S_{w^I_{j-1}}))^{L_1}] } \right.\\
&\hspace{5cm} \times\E\left[ 1- (1-u_{n-j}\ast\nu(z_n+B-S_{w^I_{j-1}}))^{L_1}\right]\Bigg].
\end{align*}
Similarly as above, $\E\left[ 1- (1-u_{n-j}\ast\nu(z_n+B-y))^{L_1}\right] = o_n(1)$ uniformly in $z_n$ and $y\in\Z^2$. Moreover, by \eqref{upbdPnx}, for all $1\le j\le n$, uniformly in $z_n$ and $y\in\Z^2$,
\begin{align}\label{bdR*}
&\frac{ P_{n-j+1}(z_n+B-y)\E\left[ R_1(1-u_{n-j}\ast\nu(z_n+B-y))^{L_1}\right]}{ \E\left[ (1-u_{n-j}\ast\nu(z_n+B-y))^{L_1}\right] }\\
\lesssim \, & \frac{1}{n-j+1} \frac{\E[R_1]}{\P(L_1=0)} \lesssim \frac{1}{n-j+1}.\nonumber
\end{align}
As a consequence, we get that uniformly in $z_n\in\Z^2$,
\begin{align}\label{bdR'nj}
\E_{\P^{\textrm{2-spine}}} \left[ \mathsf{R}'_{n,j}(z_n+B)\ind{ \mathsf{L}^I_{n,j}(z_n+B) \ge 1 } \right] = \frac{o_n(1)}{n-j+1}.
\end{align}
Plugging \eqref{bdRnj} and \eqref{bdR'nj} into \eqref{bdlateS1} yields that uniformly in $z_n\in\Z^2$,
\begin{align}\label{bdlateS1+}
\E_{\P^{\textrm{2-spine}}} \left[ \Sigma_\eqref{lateseparating}(1)\right] \lesssim & \sum_{j=1}^{m_n} \frac{1}{\log n}\frac{o_n(1)}{n-j+1} +  \sum_{j=1}^{m_n} \frac{1}{\log n}\frac{o_n(1)}{n-j+1} = o_n(1).
\end{align}

It remains to deal with $\Sigma_\eqref{lateseparating}(2) $. Note that
\begin{align}\label{bdlateS2}
&\E_{\P^{\textrm{2-spine}}} \left[ \Sigma_\eqref{lateseparating}(2)\right] = \E_{\P^{\textrm{2-spine}}} \left[   \frac{1}{\alpha|B|\log n} \sum_{j=m_n +1}^{n} \left| \mathsf{R}^{I\!I}_{n,j}(z_n+B) - \mathsf{R}^*_{n,j}(z_n+B) \right|\right] \\
\lesssim\, & \frac{1}{\log n} \sum_{j=m_n+1}^n  \E_{\P^{\textrm{2-spine}}} \left[    \mathsf{R}^{I\!I}_{n,j}(z_n+B) \right] +  \frac{1}{\log n} \sum_{j=m_n+1}^n  \E_{\P^{\textrm{2-spine}}} \left[    \mathsf{R}^*_{n,j}(z_n+B) \right].\nonumber
\end{align}
Conditioning on the two spines and using \eqref{upbdPnx}, we see that
\begin{align*}
 &\E_{\P^{\textrm{2-spine}}} \left[    \mathsf{R}^{I\!I}_{n,j}(z_n+B) \right] =  \E_{\P^{\textrm{2-spine}}} \left[   \E_{\P^{\textrm{2-spine}}} \left[    \mathsf{R}^{I\!I}_{n,j}(z_n+B) \big\vert  (S_{w_k^I}, S_{w_k^{I\!I}})_{k\le n} \right]\right] \\
 =\,& \E[R_1] \E_{\P^{\textrm{2-spine}}} \left[   P_{n-j+1}(z_n+B-S_{w_{j-1}^{I\!I}})\right] \lesssim \frac{1}{n-j+1}.
\end{align*}
In addition, by \eqref{bdR*} and \eqref{upbdPnx}, we have
\begin{align*}
&\E_{\P^{\textrm{2-spine}}} \left[    \mathsf{R}^*_{n,j}(z_n+B) \right] = \E_{\P^{\textrm{2-spine}}} \left[   \E_{\P^{\textrm{2-spine}}_{\vert_S}}  \left[\mathsf{R}^I_{n,j}(z_n+B) \vert \mathsf{L}^I_{n,j}(z_n+B)=0 \right]\right]\\
=\,& \E_{\P^{\textrm{2-spine}}} \left[   \frac{ P_{n-j+1}(z_n+B-y= S_{w^I_{j-1}})\E[ R_1(1-u_{n-j}\ast\nu(z_n+B-y= S_{w^I_{j-1}}))^{L_1}]}{ \E[ (1-u_{n-j}\ast\nu(z_n+B-y= S_{w^I_{j-1}}))^{L_1}] } \right] \lesssim \frac{1}{n-j+1}.
\end{align*}
Using this in \eqref{bdlateS2} leads to 
\begin{equation}\label{bdlateS2+}
\E_{\P^{\textrm{2-spine}}} \left[ \Sigma_\eqref{lateseparating}(2)\right]  \lesssim \frac{1}{\log n} \sum_{j=1+m_n}^n \frac{1}{n-j+1} \lesssim \delta.
\end{equation}

Finally, taking into account \eqref{lateseparating}, \eqref{bdlateS1+} and \eqref{bdlateS2+}, we obtain that uniformly in $z_n\in\Z^2$,
\begin{align*}
\E_{\P^{\textrm{2-spine}}} \left[ \left| \frac{\mathsf{R}^{I\!I}_n(z_n+B) }{\alpha|B| \log n} -   \frac{\mathsf{R}^{*}_n(z_n+B) }{\alpha|B| \log n} \right | \ind{ \mathcal{S}^{\textrm{2-spine}} \ge m_n }\right] \lesssim o_n(1) + \delta.
\end{align*}
Combining this with \eqref{earlyS} and \eqref{Steinbd} implies that uniformly in $\|z_n\|\le K\sqrt{n}$,
\begin{align*}
d_W(  \mathscr{L}( Y_n^+, \P), \mathrm{Exp}(1)) \lesssim o_n(1) + \delta.
\end{align*}
Letting $n\to\infty$ and then $\delta\downarrow 0$, we conclude that uniformly in $\|z_n\|\le K\sqrt{n}$,
\[
\lim_{n\to\infty} d_W(  \mathscr{L}( Y_n^+, \P), \mathrm{Exp}(1)) = 0,
\]
which is exactly \eqref{Wdistance}.

\subsection{Coupling of two spines: proof of \texorpdfstring{\eqref{bdDTVspine}}{}}\label{boundDTV}

To prove \eqref{bdDTVspine}, recall that 
\[
\P\big( (S^{I\!I}_{w_k})_{k\le m_n} =  (x_k)_{k\le m_n}\big)= \Q^* \big( (S_{w_k})_{k\le m_n}= (x_k)_{k\le m_n}\vert S_{w_n}\in z_n +B\big),
\]
and 
\begin{align*}
\P^{\textrm{spine-I}}\big((S^I_{w_k})_{k\le m_n}= (x_k)_{k\le m_n}\big) = &\Q^*\big( (S_{w_k})_{k\le m_n} =(x_k)_{k\le m_n}) \vert S_{w_n}\in z_n +B, \mathsf{L}_n^{\prec w_n}(z_n+B)=0 \big)\\
=&\Q^*\big( (S_{w_k})_{k\le m_n} =(x_k)_{k\le m_n}) \vert S_{w_n}\in z_n +B, \cap_{j=1}^n\{\mathsf{L}_{n,j}(z_n+B)=0\} \big).
\end{align*}
Recall from \eqref{eq:mn} that $1\le m_n = n - \floor{n^\delta}$. Let us introduce a new random vector $(\Xi^{m_n}_k)_{k\le m_n}$ under $\P$, which is distributed as
\[
    \Q^*\left( (S_{w_k})_{k\le m_n}\in \cdot \vert S_{w_n}\in z_n +B, \cap_{j=1}^{m_n}\{\mathsf{L}_{n,j}(z_n+B)=0\}\right).
\] 
It immediately yields that
\begin{align*}
&D_{TV}(m_n) \\
\le\, & \|\mathscr{L}( (S^I_{w_k})_{k\le m_n},\P^{\textrm{spine-I}}) - \mathscr{L}( (\Xi^{m_n}_{k})_{k\le m_n},\P)  \|_{TV} + \| \mathscr{L}( (\Xi^{m_n}_{k})_{k\le m_n},\P)-   \mathscr{L}( (S^{I\!I}_{w_k})_{k\le m_n},\P)\|_{TV}\\
=:\,& D^I_{TV}(m_n) + D^{I\!I}_{TV}(m_n).
\end{align*}
We will study $D^I_{TV}(m_n)$ and $D^{I\!I}_{TV}(m_n)$ separately. 

\paragraph{Convergence of $D^{I\!I}_{TV}(m_n)$.} 
For convenience, we define $\Q^*_{0,z_n+B}(\cdot) := \Q^*(\cdot\vert S_{w_n}\in z_n +B)$. By definition,
\begin{align*}
&\P((\Xi^{m_n}_k)_{k\le m_n}=(x_k)_{k\le m_n})\\
=\,& \frac{\Q^*\left( (S_{w_k})_{k\le m_n} =(x_k)_{k\le m_n}, S_{w_n}\in z_n +B, \cap_{j=1}^{m_n}\{\mathsf{L}_{n,j}(z_n+B)=0\}\right)}{\Q^*( S_{w_n}\in z_n +B, \cap_{j=1}^{m_n}\{\mathsf{L}_{n,j}(z_n+B)=0\} )} \\
=\,& \frac{\Q^*_{0,z_n+B}\left( \cap_{j=1}^{m_n}\{\mathsf{L}_{n,j}(z_n+B)=0\} \vert (S_{w_k})_{k\le m_n} =(x_k)_{k\le m_n}  \right)}{ \Q^*_{0,z_n+B}( \cap_{j=1}^{m_n}\{\mathsf{L}_{n,j}(z_n+B)=0\} )}\times \Q^*_{0,z_n+B}( (S_{w_k})_{k\le m_n} = (x_k)_{k\le m_n} ),
\end{align*}
from which we deduce that
\begin{align*}
& 2 D^{I\!I}_{TV}(m_n)=   \sum_{x_1,\cdots, x_{m_n}\in\Z^2} \left|   \P((\Xi^{m_n}_k)_{k\le m_n}=(x_k)_{k\le m_n})-  \Q^*_{0,z_n+B}( (S_{w_k})_{k\le m_n} = (x_k)_{k\le m_n} ) \right| \\
 = \, &\sum_{x_1,\cdots, x_{m_n}\in\Z^2} \left| \frac{\Q^*_{0,z_n+B}( \cap_{j=1}^{m_n}\{\mathsf{L}_{n,j}(z_n+B)=0\} \vert (S_{w_k})_{k\le m_n} =(x_k)_{k\le m_n} )}{ \Q^*_{0,z_n+B}( \cap_{j=1}^{m_n}\{\mathsf{L}_{n,j}(z_n+B)=0\} )}-1\right| \\
 &\hspace{10cm}  \times \Q^*_{0,z_n+B}( (S_{w_k})_{k\le m_n}= (x_k)_{k\le m_n})  \\
=\, & \E_{\Q^*_{0,z_n+B}}\left[ \left| \frac{ \E_{\Q^*_{0,z_n+B}}\big[ \ind{\cap_{j=1}^{m_n}\{\mathsf{L}_{n,j}(z_n+B)=0\}} \big\vert (S_{w_k})_{k\le m_n}  \big] }{ \E_{\Q^*_{0,z_n+B}}\big[ \ind{\cap_{j=1}^{m_n}\{\mathsf{L}_{n,j}(z_n+B)=0\}} \big]}-1\right| \right].
\end{align*}
We claim that for $m_n=n-\lfloor n^\delta\rfloor$ with fixed $\delta\in(0,1/2)$, uniformly in $\|z_n\|\le K\sqrt{n}$, under $\Q^*_{0,z_n+B}$, 
\begin{equation}\label{goodL}
\E_{\Q^*_{0,z_n+B}}\left[ \ind{\cap_{j=1}^{m_n}\{\mathsf{L}_{n,j}(z_n+B)=0\}} \Big\vert (S_{w_k})_{k\le m_n}  \right] \xrightarrow[n\to\infty]{\textrm{ in probability }} \delta.
\end{equation}
The proof of \eqref{goodL} is postponed to the end of this section. Let us move forward by admitting it here. 
It follows by bounded convergence that as $n\to \infty$, uniformly in $\|z_n\|\le K\sqrt{n}$, 
\[
 \E_{\Q^*_{0,z_n+B}}\left[ \ind{\cap_{j=1}^{m_n}\{\mathsf{L}_{n,j}(z_n+B)=0\}} \right] = \E_{\Q^*_{0,z_n+B}}\left[ \E_{\Q^*_{0,z_n+B}}\left[ \ind{\cap_{j=1}^{m_n}\{\mathsf{L}_{n,j}(z_n+B)=0\}} \Big\vert (S_{w_k})_{k\le m_n}  \right]  \right] \to \delta.
\]
Therefore, using again bounded convergence, we obtain that uniformly in $\|z_n\|\le K\sqrt{n}$,
\[
D^{I\!I}_{TV}(m_n) \xrightarrow[n\to\infty]{} 0.
\]

\paragraph{Convergence of $D^I_{TV}(m_n)$.} 

According to Lemma 17 in \cite{LZ11}, if there is an event $F$ in some probability space $(\Omega, \mathcal{F}, P)$ such that $P(F^c)=P(\Omega\setminus F)\le \varepsilon<1$, then
\[
\| P(\cdot) - P(\cdot\vert F) \|_{TV} \le 2\varepsilon.
\] 
Instead of comparing directly $\mathscr{L}( (S^I_{w_k})_{k\le m_n},\P^{\textrm{spine-I}})$ and $ \mathscr{L}( (\Xi^{m_n}_{k})_{k\le m_n},\P) $, we fix some large $M>0$ and introduce the events
\[
    F_n(\Xi):=\big\{ \|  \Xi_{m_n}^{m_n} -z_n \| < M n^{\delta/2} \big\} \textrm{ and } F_n(S^I):=\big\{ \|S^I_{w_{m_n}}-z_n \| < M n^{\delta/2}\big\}.
\] 
After verifying that both $\P^{\textrm{spine-I}}( F_n(S^I)^c )$ and $\P(F_n(\Xi)^c  )$ are small, we will turn to show that 
\[
    \lim_{n\to \infty} \big\|  \P( (\Xi^{m_n}_{k})_{k\le m_n}\in \cdot \vert F_n(\Xi)) -  \P^{\textrm{spine-I}}((S^I_{w_k})_{k\le m_n}\in\cdot \vert F_n(S^I) )\big\|_{TV}=0.
\] 
Therefore, uniformly in $\|z_n\|\le K\sqrt{n}$,
\begin{equation}
    \label{eq:D^I_TV}
    D^{I}_{TV}(m_n) \xrightarrow[n\to\infty]{} 0, 
\end{equation}
and $D_{TV}(m_n) \to 0$ readily follows.

\subparagraph{Step 1.} Let us verify that uniformly in $\|z_n\|\le K\sqrt{n}$, $\P(F_n(\Xi)^c  ) = o_n(1) + o_{M}(1)$.

By \eqref{Pnx}, for all $y\in \Z^2$ satisfying $M n^{\delta/2}\le \|z_n-y\|\le n^\delta$ and $\|z_n\| \le K\sqrt{n}$, the ratio $P_{m_n}(y)/P_n(z_n)$ is bounded away from infinity in a uniform way, not only for $n$, but also for $M>0$. This is also true for the ratio $P_{m_n}(y)/P_n(z_n+B)$.
For sufficiently large $n$, we also have
\[
\sum_{y: \|z_n-y\|\ge Mn^{\delta/2}}P_{\lfloor n^\delta\rfloor}(z_n-y+B) \le |B|\cdot\mathbb{P}\Big(\max_{j\le n^\delta}\|S_j\| \ge \frac{M}{2} n^{\delta/2} \Big).
\]
By Proposition 2.1.2(b) of \cite{Lawler-Limic}, there exists some constant $c>0$ such that for all $n\geq 1$ and all $M>0$,
\[
    \mathbb{P}\Big(\max_{j\le n^\delta}\|S_j\| \geq \frac{M}{2} n^{\delta/2} \Big)\lesssim e^{- c M^2}.
\]
By Markov property, we deduce from the preceding discussions that
\begin{align*}
    \Q^*( \|S_{w_{m_n}}- z_n \| \ge M n^{\delta/2} \vert S_{w_n} \in z_n +B)= & \sum_{y: \|z_n-y\|\ge M n^{\delta/2}} P_{m_n}(y)\frac{P_{\lfloor n^\delta\rfloor }(z_n-y+B) }{P_n(z_n+B)}\\
    =& \sum_{y:\|z_n-y\|\ge M n^{\delta/2}} P_{\lfloor n^\delta\rfloor}(z_n-y+B) \frac{P_{m_n}(y)}{P_n(z_n+B)}\\
    \lesssim&\,\, e^{-cM^2}= o_{M}(1).
\end{align*}
Together with the fact $D^{I\!I}_{TV}(m_n)=o_n(1)$, this entails that uniformly in $\|z_n\|\le K\sqrt{n}$,
\begin{align}\label{restrictXi}
 \P\left( \|\Xi^{m_n}_{m_n}-z_n \| \ge M n^{\delta/2}  \right) = o_n(1) +o_{M}(1).
\end{align}

\subparagraph{Step 2.} Let us verify that uniformly in $\|z_n\|\le K\sqrt{n}$, $\P^{\textrm{spine-I}}( F_n(S^I)^c )=o_n(1) + o_{M}(1)$. 

In fact, as 
\[
u_n(z_n+B) = \Q^*( S_{w_n}\in z_n +B, \cap_{j=1}^{n}\{\mathsf{L}_{n,j}(z_n+B)=0\} ),
\]
we have by definition of $F_n(S^I)$ that 
\begin{align*}
&\P^{\textrm{spine-I}}( F_n(S^I)^c ) =\sum_{y: \|z_n-y\|\ge M n^{\delta/2} }\frac{ \Q^*\left( S_{w_{ m_n}} =y, S_{w_n}\in z_n +B, \cap_{j=1}^{n}\{\mathsf{L}_{n,j}(z_n+B)=0\}\right) }{u_n(z_n+B)}\\
\le \,& \sum_{y: \|z_n-y\|\ge M n^{\delta/2}} \frac{ \Q^*\left( S_{w_{ m_n}} =y, S_{w_n}\in z_n +B, \cap_{j=m_n+1}^{n}\{\mathsf{L}_{n,j}(z_n+B)=0\}\right) }{u_n(z_n+B)}.
\end{align*}
Let us write $\|B\|:=\sup_{b\in B}\|b\|<\infty$.
By Markov property at time $m_n$, it follows that
\begin{align}
 \P^{\textrm{spine-I}}( F_n(S^I)^c )  \le & \sum_{y: \|z_n-y\|\ge M n^{\delta/2}} \Q^*\left( S_{w_{ m_n}} =y \right) \frac{ \Q^*_y( S_{w_{n^\delta}} \in z_n+B, \cup_{j=1}^{\lfloor n^\delta\rfloor}\{ \mathsf{L}_{\lfloor n^\delta\rfloor,j}(z_n+B) =0\} )}{ u_n(z_n +B)} \nonumber \\
=&  \sum_{y\colon n^\delta+\|B\| \ge \|z_n-y\|\ge M n^{\delta/2}} \frac{ P_{m_n}(y)  }{ u_n(z_n+B) } \cdot u_{\lfloor n^\delta\rfloor}(z_n+B-y). \label{eq:Fn-c}
\end{align}
In the last line, we have used the fact that each jump of random walk is at most of distance 1. As a result, $u_n(x)=0$ if $\|x\|>n$.

Again, for $\|z_n\|\le K\sqrt{n}$ and $n^\delta +\|B\| \ge \|z_n-y\|\ge M n^{\delta/2}$, recall that the ratio $P_{m_n}(y)/P_n(z_n)$ is bounded away infinity in a uniform way, not only for $n$, but also for $M>0$. 
Meanwhile, \eqref{Pnx} and \eqref{unxB} imply that uniformly in $\|z_n\|\le K\sqrt{n}$, $u_n(z_n+B) = \frac{P_n(z_n)(1+o_n(1))}{\alpha\log n}$. So, we have
\begin{equation}
    \label{eq:sup-ratio}
    \sup_{\substack{\|z_n\|\le K\sqrt{n}\\ n^\delta +\|B\| \ge \|z_n-y\|\ge M n^{\delta/2}}} \frac{ P_{m_n}(y)  }{ u_n(z_n+B) } \lesssim \log n.
\end{equation}
Moreover, by \eqref{upbduv2}, for any fixed $a\in(0,1)$, there exists a constant $C_a$ such that for any $z\in \Z^2$,
\[
    u_{\lfloor n^\delta\rfloor}(z) \le  \frac{C_a}{\delta \log n}  \Big(P_{\lfloor n^\delta\rfloor}(z) + \frac{1}{\delta\log n} \sup_{\|x\|\le n^{a\delta}}P_{\lfloor n^\delta \rfloor -\lfloor n^{a\delta}\rfloor}(z-x)\Big).
\]
Using $u_n(z+B) \le \sum_{b\in B}u_n(z+b)$, we deduce that 
\begin{align}
 &\sum_{y\colon n^\delta+\|B\| \ge \|z_n-y\|\ge M n^{\delta/2}} u_{\lfloor n^\delta\rfloor}(z_n+B-y) \leq  \sum_{z \colon n^\delta+\|B\| \ge \|z\|\ge M n^{\delta/2}} \sum_{b\in B} u_{\lfloor n^\delta\rfloor}(z+b)  \label{eq:sum-undelta}  \\
 \lesssim\, &\frac{1}{\delta \log n}\bigg(\sum_{n^\delta+\|B\| \ge \|z\| \ge Mn^{\delta/2}} \sum_{b\in B} \Big(P_{\lfloor n^\delta\rfloor }(z+b) +\frac{1}{\delta\log n} \sup_{\|x\|\le n^{a\delta}}P_{\lfloor n^\delta \rfloor -\lfloor n^{a\delta}\rfloor}(z+b-x)\Big)\bigg) \nonumber.
\end{align}
Here we apply Theorem 2.1.1, formula (2.4) in \cite{Lawler-Limic}, which gives the following bound that uniformly in $x$ and $n$,
\[
|P_n(x) - p_n(x) | \lesssim \frac{1}{n^{2} \|x/\sqrt{n}\|^2}.
\]
Together with the monotonicity of $p_n(x)$ with respect to $\|x\|$, we see that for sufficiently large $n$,
\begin{align*}
  \sum_{n^\delta+\|B\| \ge \|z\| \ge Mn^{\delta/2}} \sum_{b\in B} P_{\lfloor n^\delta\rfloor }(z+b)  \lesssim & \sum_{n^\delta+\|B\| \ge \|z\| \ge Mn^{\delta/2}} \sum_{b\in B} \Big(p_{\lfloor n^\delta\rfloor }(z+b) + \frac{1}{n^{2\delta} M^2}\Big) \\
  \lesssim & \sum_{n^\delta+\|B\| \ge \|z\| \ge Mn^{\delta/2}} \sum_{b\in B} \Big(p_{\lfloor n^\delta\rfloor }(z/2) + \frac{1}{n^{2\delta} M^2}\Big) \\
  =\, & o_M(1) + o_n(1).
\end{align*}
We can argue in a similar way to get
\[
    \frac{1}{\delta\log n}  \sum_{n^\delta+\|B\| \ge \|z\| \ge Mn^{\delta/2}} \sum_{b\in B} \sup_{\|x\|\le n^{a\delta}}P_{\lfloor n^\delta \rfloor -\lfloor n^{a\delta}\rfloor}(z+b-x) = o_n(1).
\]
Putting these estimates into \eqref{eq:sum-undelta}, then combining it with \eqref{eq:sup-ratio} and \eqref{eq:Fn-c}, we conclude that uniformly in $\|z_n\|\le K\sqrt{n}$,
\[
 \P^{\textrm{spine-I}}( F_n(S^I)^c ) \lesssim \frac{\log n}{\delta \log n} (o_M(1) + o_n(1)) = o_n(1) + o_{M}(1).
\]

\subparagraph{Step 3.} It remains to bound $ \|\P( (\Xi^{m_n}_{k})_{k\le m_n}\in \cdot \vert F_n(\Xi)  )- \P^{\textrm{spine-I}}((S^I_{w_k})_{k\le m_n}\in\cdot \vert F_n(S^I) )\|_{TV}$. Note that
\begin{align*}
&2\|  \P( (\Xi^{m_n}_{k})_{k\le m_n}\in \cdot \vert F_n(\Xi)) -  \P^{\textrm{spine-I}}((S^I_{w_k})_{k\le m_n}\in\cdot \vert F_n(S^I) )\|_{TV}\\
=& \sum_{\substack{\|x_{m_n}-z_n\| \le M n^{\delta/2},\\ x_1,\cdots, x_{m_n-1}\in \Z^2}} \left|  \P( (\Xi^{m_n}_{k})_{k\le m_n} = (x_k)_{k\le m_n} \vert F_n(\Xi) ) -  \P^{\textrm{spine-I}}((S^I_{w_k})_{k\le m_n} = (x_k)_{k\le m_n} \vert F_n(S^I)) \right|\\
= &  \sum_{\substack{\|x_{m_n}-z_n\| \le M n^{\delta/2},\\ x_1,\cdots, x_{m_n-1}\in \Z^2}}\P( (\Xi^{m_n}_{k})_{k\le m_n}=(x_k)_{k\le m_n}\in \cdot \vert F_n(\Xi))\times\left| 1 - \frac{\P^{\textrm{spine-I}}((S^I_{w_k})_{k\le m_n} = (x_k)_{k\le m_n} \vert F_n(S^I))}{ \P( (\Xi^{m_n}_{k})_{k\le m_n} = (x_k)_{k\le m_n} \vert F_n(\Xi) ) } \right|.
\end{align*}
Observe that by definition,
\begin{align*}
&\frac{\P^{\textrm{spine-I}}((S^I_{w_k})_{k\le m_n} = (x_k)_{k\le m_n} \vert F_n(S^I))}{ \P( (\Xi^{m_n}_{k})_{k\le m_n} = (x_k)_{k\le m_n} \vert F_n(\Xi) ) } \\
=\,& \frac{\Q^*\left( (S_{w_k})_{k\le m_n} =(x_k)_{k\le m_n}, S_{w_n}\in z_n +B, \cap_{j=1}^{m_n}\{\mathsf{L}_{n,j}(z_n+B)=0\}\right)}{\Q^*\left( (S_{w_k})_{k\le m_n} =(x_k)_{k\le m_n}, S_{w_n}\in z_n +B, \cap_{j=1}^{n}\{\mathsf{L}_{n,j}(z_n+B)=0\}\right)}\\
&\hspace{4cm}\times \frac{ \Q^*(\|S_{w_{m_n}}-z_n\|<Mn^{\delta/2}, S_{w_n}\in z_n +B, \cap_{j=1}^{n}\{\mathsf{L}_{n,j}(z_n+B)=0\} )}{ \Q^*( \|S_{w_{m_n}}-z_n\|<Mn^{\delta/2}, S_{w_n}\in z_n +B, \cap_{j=1}^{m_n}\{\mathsf{L}_{n,j}(z_n+B)=0\} )}.
\end{align*}
By Markov property at time $m_n$,
\begin{align*}
&\frac{ \Q^*\left( (S_{w_k})_{k\le m_n} =(x_k)_{k\le m_n}, S_{w_n}\in z_n +B, \cap_{j=1}^{n}\{\mathsf{L}_{n,j}(z_n+B)=0\}\right) }{ \Q^*\left( (S_{w_k})_{k\le m_n} =(x_k)_{k\le m_n}, S_{w_n}\in z_n +B, \cap_{j=1}^{m_n}\{\mathsf{L}_{n,j}(z_n+B)=0\}\right) }\\
=\,&\frac{ \Q^*\left( (S_{w_k})_{k\le m_n} =(x_k)_{k\le m_n}, \cap_{j=1}^{m_n}\{\mathsf{L}_{n,j}(z_n+B)=0\}\right)\Q^*_{x_{m_n}}(\mathsf{L}_{\floor{n^\delta}}^{\prec w_{\lfloor n^\delta \rfloor}}(z_n+B)=0, S_{w_{\lfloor n^\delta \rfloor}}\in z_n+B) }{ \Q^*\left( (S_{w_k})_{k\le m_n} =(x_k)_{k\le m_n}, \cap_{j=1}^{m_n}\{\mathsf{L}_{n,j}(z_n+B)=0\}\right) P_{\lfloor n^\delta \rfloor}(z_n+B-x_{m_n}) }\\
=\,& \frac{ u_{\lfloor n^\delta \rfloor}(z_n+B-x_{m_n}) }{ P_{\lfloor n^\delta \rfloor}(z_n+B-x_{m_n})  }, \textrm{ and }\\
& \frac{ \Q^*( \|S_{w_{m_n}}-z_n\|< Mn^{\delta/2}, S_{w_n}\in z_n +B, \cap_{j=1}^{m_n}\{\mathsf{L}_{n,j}(z_n+B)=0\} ) }{ \Q^*( \|S_{w_{m_n}}-z_n\|< Mn^{\delta/2}, S_{w_n}\in z_n +B, \cap_{j=1}^{n}\{\mathsf{L}_{n,j}(z_n+B)=0\} ) }\\
=\,& \frac{ \sum_{y:\|z_n-y\|< Mn^{\delta/2} } \Q^*( S_{w_{m_n}}=y, \cap_{j=1}^{m_n}\{\mathsf{L}_{n,j}(z_n+B)=0\} )P_{\lfloor n^\delta\rfloor}(z_n+B-y) }{ \sum_{y: \|z_n-y\|< M n^{\delta/2}} \Q^*( S_{w_{m_n}}=y, \cap_{j=1}^{m_n}\{\mathsf{L}_{n,j}(z_n+B)=0\} ) u_{\lfloor n^\delta\rfloor}(z_n+B-y) }.
\end{align*}
Uniformly for $\|x_{m_n}-z_n\|< Mn^{\delta/2}$ and $ \|z_n-y\|< Mn^{\delta/2}$, we obtain from \eqref{unxB} that
\[
 \frac{ u_{\lfloor n^\delta \rfloor}(z_n+B-x_{m_n}) }{ P_{\lfloor n^\delta \rfloor}(z_n+B-x_{m_n})  }= \frac{1+o_n(1)}{\alpha|B|\delta \log n} \quad \textrm{ and } \quad \frac{P_{\lfloor n^\delta \rfloor}(z_n+B-y) }{u_{\lfloor n^\delta \rfloor}(z_n+B-y) } = \alpha|B|\delta\log n (1+o_n(1)).
\]
This implies that uniformly in $x_{m_n}$ satisfying $\|z_n-x_{m_n}\|< Mn^{\delta/2}$,
\begin{align*}
&\frac{\P^{\textrm{spine-I}}((S^I_{w_k})_{k\le m_n} = (x_k)_{k\le m_n} \vert F_n(S^I))}{ \P( (\Xi^{m_n}_{k})_{k\le m_n} = (x_k)_{k\le m_n} \vert F_n(\Xi) ) } =1+o_n(1).
\end{align*}
As a result, for any fixed $M>0$,
\begin{align*}
&\|  \P( (\Xi^{m_n}_{k})_{k\le m_n}\in \cdot \vert F_n(\Xi)) -  \P^{\textrm{spine-I}}((S^I_{w_k})_{k\le m_n}\in\cdot \vert F_n(S^I) \|_{TV} = o_n(1).
\end{align*}
Combining the three steps above, letting $n\to \infty$ and then $M\to \infty$, we establish the convergence \eqref{eq:D^I_TV}.

Finally we prove \eqref{goodL} to finish this section.
\begin{proof}[Proof of \eqref{goodL}]
Calculating directly under the spinal decomposition, one sees that under $\Q^*_{0,z_n+B}$,
\begin{align}\label{goodL-1}
\Q^*_{0,z_n+B}\eqref{goodL-1}:=&\,\E_{\Q^*_{0,z_n+B}}\left[ \ind{\cap_{j=1}^{m_n}\{\mathsf{L}_{n,j}(z_n+B)=0\}} \Big\vert (S_{w_k})_{k\le m_n} \right] \\
=& \prod_{j=1}^{m_n} \E\left[(1-u_{n-j}\ast\nu(z_n+B-x_{j-1}))^{L_{1}} \right]\Big\vert _{x_{j-1}=S_{w_{j-1}}} \nonumber\\
=& \prod_{j=\lfloor n^\delta\rfloor}^{n-1}\E\left[ e^{L_1 \log(1-u_j\ast\nu(z_n+B -x_{n-j-1}))} \right] \Big\vert _{x_{n-j-1}=S_{w_{n-j-1}}}.\nonumber
\end{align}
By \eqref{upbdu}, uniformly for $\lfloor n^\delta \rfloor \le j < n$, it is clear that $\sup_x u_j\ast\nu(z_n+B -x) \lesssim \frac{1}{j\log j}=o_n(1)$. 
\begin{lem}
For a sequence of positive numbers $(h_n)_{n\geq 1}$, if $h_n=o_n(1)$ as $n\to \infty$, then 
\[
    \E\left[ \exp(L_1\log(1-  h_n)) \right] = \exp\Big(\!-(1+o(1))h_n \frac{\sigma^2}{2} +o_n(1)h_n\Big).
\]
\end{lem}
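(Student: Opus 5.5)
The plan is to show that $1-\E[(1-h_n)^{L_1}] = (1+o_n(1))\frac{\sigma^2}{2}h_n$ and then pass to the exponential form by taking logarithms. First, recall from \eqref{jointLR} that $\E[L_1]=\sigma^2/2<\infty$; only the first moment of $L_1$ is needed here. Write $g(h):=\E[(1-h)^{L_1}]=f_L(1-h)$. Since $g(0)=1$ and $g$ is the generating function evaluated at $1-h$, we have
\[
1-g(h)=\E\big[1-(1-h)^{L_1}\big].
\]

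The main step is the bound $\big|1-(1-h)^{L_1}-hL_1\big|\le hL_1\wedge \binom{L_1}{2}h^2\le hL_1\,(1\wedge hL_1)$. This follows from the elementary inequalities $1-kh\le (1-h)^k\le 1-kh+\binom k2 h^2$ for $h\in[0,1]$. Taking expectations gives
\[
\big|1-g(h_n)-h_n\tfrac{\sigma^2}{2}\big|\le h_n\,\E\big[L_1(1\wedge h_nL_1)\big],
\]
and $\E[L_1(1\wedge h_nL_1)]\to0$ by dominated convergence, since $L_1$ is integrable and $1\wedge h_nL_1\to0$ pointwise. Hence $1-g(h_n)=h_n(\frac{\sigma^2}{2}+o_n(1))$.

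Finally, we take logarithms. Because $1-g(h_n)=O(h_n)=o_n(1)$, the expansion $\log(1-x)=-x(1+O(x))$ yields
\[
\log g(h_n)=-(1+o_n(1))\Big(\tfrac{\sigma^2}{2}h_n+o_n(1)h_n\Big)=-(1+o(1))h_n\tfrac{\sigma^2}{2}+o_n(1)h_n.
\]
This is exactly the claimed form: the multiplicative $(1+o(1))$ absorbs the logarithm correction, and the additive $o_n(1)h_n$ absorbs the dominated-convergence error.

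The only mildly delicate point is that no second moment of $L_1$ is available. Under \eqref{hyp}, $L_1$ has only a finite mean, since $\E[L_1^2]$ involves the third moment of $\mu$. For this reason a Taylor expansion to second order cannot be used directly, and the truncated bound $hL_1(1\wedge hL_1)$ combined with dominated convergence is the replacement I would rely on.
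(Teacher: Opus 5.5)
Your proof is correct and follows essentially the same route as the paper: linearize, bound the remainder by $hL_1(1\wedge hL_1)$, and apply dominated convergence using only $\E[L_1]=\sigma^2/2<\infty$. The one cosmetic difference is that the paper linearizes $e^{-hL_1}$ and substitutes $-\log(1-h_n)=(1+o_n(1))h_n$ at the end, whereas you handle $(1-h)^{L_1}$ directly via the Bonferroni-type inequalities and take logarithms at the end.
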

\begin{proof}
Recall that $\E[L_1]=\frac{\sigma^2}{2}$. Using the inequality 
\[
    |1-hx -e^{-hx} |\le x \int_0^h|1-e^{-ux}|\d u\le xh(1\wedge xh) \textrm{ for } x,h>0, 
\] 
we see by the dominated convergence theorem that
\begin{align*}
\Big\vert \E\left[ \exp(- h_n L_1) \right] -  \E\left[ 1-   h_n L_1 \right] \Big\vert \le &\,\E[ L_1 h_n (1\wedge L_1 h_n )] = h_n\times o_n(1), \\
 \Big\vert \E\left[ 1-  h_n L_1 \right] - \exp(- h_n\E[L_1]) \Big\vert \le &\, \E[ L_1 ] h_n (1\wedge \E[L_1]h_n) = h_n\times o_n(1).
\end{align*}
Therefore, $\E\left[ \exp(-  h_n L_1) \right] = \exp(- h_n \frac{\sigma^2}{2} ) + o_n(1) h_n = \exp(-h_n \frac{\sigma^2}{2} +o_n(1)h_n)$. 
To conclude, we use the fact that $\log(1-h_n)=-(1+o_n(1))h_n$. 
\end{proof}

We apply this lemma to $h_n=u_j\ast\nu(z_n+B -x)$.  
This yields that uniformly for $\lfloor n^\delta\rfloor\le j< n$, and uniformly in $\|z_n\|\le K\sqrt{n}$, as $n \to \infty$,
\begin{align*}
&\E\left[ \exp\left(  L_1 \log(1-u_j\ast\nu(z_n+B -x_{n-j-1})) \right) \right] \\
= &\exp\Big(\!- (1+o_n(1)) u_j\ast\nu(z_n+B -x_{n-j-1})  \frac{\sigma^2}{2}  + \frac{o_n(1)}{j\log j} \Big),
\end{align*}
which, according to \eqref{unxB}, equals to $\exp\left(-\frac{\sigma^2}{2} \frac{P_{j+1}(z_n-x_{n-j-1})}{\alpha\log (j+1)} +  \frac{o_n(1)}{j\log j}\right) $.

Going back to \eqref{goodL-1}, we get that under $\Q^*_{0,z_n+B}$,
\begin{align*}
\Q^*_{0,z_n+B}\eqref{goodL-1}&= \exp\bigg(\sum_{j=\lfloor n^\delta\rfloor }^{n-1}\Big(- \frac{\sigma^2}{2} \frac{P_{j+1}(z_n-S_{w_{n-j-1}})}{\alpha\log (j+1)} +\frac{o_n(1)}{j\log j} \Big)  \bigg)\\
&= \exp\bigg(\!- \frac{\sigma^2}{2} \Big(\sum_{j=\lfloor n^\delta\rfloor }^{n-1}\frac{P_{j+1}(z_n-S_{w_{n-j-1}})}{\alpha\log (j+1)} \Big)+o_n(1)\log\frac{1}{\delta} \bigg).
\end{align*}
It suffices to show that for arbitrarily small $\varepsilon>0$, uniformly in $\|z_n\|\le K\sqrt{n}$,
\begin{equation}\label{goodL-2}
\Q^*_{0,z_n+B}\eqref{goodL-2}:=\Q^*_{0,z_n+B}\bigg( \Big\vert \frac{\sigma^2}{2}\sum_{j=\lfloor n^\delta\rfloor}^{n-1}\frac{P_{j+1}(z_n-S_{w_{n-j-1}})}{\alpha\log (j+1)} - \log \frac{1}{\delta} \Big\vert \ge \varepsilon \bigg) \xrightarrow[n\to\infty]{} 0.
\end{equation}
In fact, recalling that under $\mathbb{P}$, $(S_k)_{k\ge0}$ is a random walk with jump law $\nu$ starting from 0, we have
\begin{equation*}
\Q^*_{0,z_n+B}\eqref{goodL-2}\leq \frac{1}{P_n(z_n+B)}\sum_{b\in B} \mathbb{P}\bigg( \Big\vert \frac{\sigma^2}{2}\sum_{j=\lfloor n^\delta\rfloor}^{n-1}\frac{P_{j+1}(z_n-S_{{n-j-1}})}{\alpha\log (j+1)} - \log \frac{1}{\delta} \Big\vert \ge \varepsilon , S_n =z_n+b \bigg).
\end{equation*}
As $(S_k)_{0\le k\le n}$ is identically distributed as $(S_n-S_{n-k})_{0\le k\le n}$, 
\begin{align*}
&\Q^*_{0,z_n+B}\eqref{goodL-2}\\
\leq\,& \frac{1}{P_n(z_n+B)}\sum_{b\in B} \mathbb{P}\bigg( \Big\vert \frac{\sigma^2}{2}\sum_{j=\lfloor n^\delta \rfloor}^{n-1}\frac{P_{j+1}(z_n-(S_n-S_{j+1}))}{\alpha\log (j+1)} - \log \frac{1}{\delta} \Big\vert \ge \varepsilon , S_n =z_n+b \bigg)\\
=\,& \frac{1}{P_n(z_n+B)}\sum_{b\in B} \mathbb{P}\bigg( \Big\vert \frac{\sigma^2}{2}\sum_{j=\lfloor n^\delta \rfloor+1}^{n}\frac{P_{j}(-b+S_{j})}{\alpha\log (j)} - \log \frac{1}{\delta} \Big\vert \ge \varepsilon , S_n =z_n+b \bigg).
\end{align*}
By \eqref{upbdPnx}, we have the uniform estimate
\begin{equation}\label{goodL-badj}
\sum_{j=\lfloor \delta n\rfloor+1}^{n} \frac{P_{j}(-b+S_{j})}{\alpha\log j} \leq \sum_{j=\lfloor \delta n\rfloor+1}^{n} \frac{P_{j}(-b+S_{j})}{\alpha\log(\delta n) }  \lesssim \frac{-\log\delta}{\log(\delta n)} = o_n(1).
\end{equation}
It thus follows that for sufficiently large $n$,
\begin{align*}
\Q^*_{0,z_n+B}\eqref{goodL-2}\le &  \frac{1}{P_n(z_n+B)}\sum_{b\in B} \mathbb{P}\bigg( \Big\vert \frac{\sigma^2}{2}\sum_{j=\lfloor n^\delta\rfloor+1}^{\lfloor \delta n\rfloor}\frac{P_{j}(-b+S_{j})}{\alpha\log (j)} - \log \frac{1}{\delta} \Big\vert \ge \varepsilon/2 , S_n =z_n+b \bigg) \\
=&  \frac{1}{P_n(z_n+B)} \sum_{b\in B}\mathbb{E}\Bigg[\ind{ \big\vert \frac{\sigma^2}{2}\sum_{j=\lfloor n^\delta\rfloor+1}^{\lfloor \delta n\rfloor}\frac{P_{j}(-b+S_{j})}{\alpha\log (j)} - \log \frac{1}{\delta} \big\vert \ge \varepsilon/2}\mathbb{P}_{S_{\lfloor \delta n\rfloor}}(S_{n-\lfloor \delta n\rfloor} = z_n +b)\Bigg],
\end{align*}
where the second line comes from the Markov property. Note that by \eqref{Pnx}, there exists some constant $C(\delta)<\infty$ independent of $n$ so that uniformly in $\|z_n\|\le K\sqrt{n}$, 
\[
\frac{\mathbb{P}_{S_{\delta n}}(S_{n-\delta n}= z_n+b)}{P_n(z_n+B)}\le \sup_{z\in \Z^2} \frac{P_{n-\delta n}(z_n+b-z)}{P_n(z_n+B)} \le C(\delta).
\]
This entails that
\begin{align}\label{goodL-3}
\sup_{\|z_n\|\le K\sqrt{n}}\Q^*_{0,z_n+B}\eqref{goodL-2}\lesssim & \sum_{b\in B}\mathbb{P}\bigg( \Big\vert \frac{\sigma^2}{2}\sum_{j=\lfloor n^\delta\rfloor+1}^{\lfloor\delta n\rfloor}\frac{P_{j}(-b+S_{j})}{\alpha\log (j)} - \log \frac{1}{\delta} \Big\vert \ge \varepsilon/2 \bigg).
\end{align}
As $b\in B$ is a fixed point, we have known from \eqref{eq:cv-proba-Gamma} or from Lemma \ref{lem: SRW} that for any fixed integer $K\geq 1$, 
the following convergence in probability holds:
\begin{equation}
   \label{eq:cv-proba-sumP} 
    \Big|\frac{1}{\log m}\sum_{j=K}^m P_j(-b+S_{j})- \frac{5}{8\pi}\Big| \xrightarrow[m\to\infty]{\mathbb{P}}0.
\end{equation}
\begin{lem}
    Recall that $\delta\in(0,1/2)$ is fixed. We have the convergence in probability
\begin{equation}
   \label{eq:cv-proba-max} 
   \max_{\lfloor n^\delta\rfloor \le m \le n}\Big|\frac{1}{\log m}\sum_{j=K}^m P_j(-b+S_{j})- \frac{5}{8\pi}\Big| \xrightarrow[n\to\infty]{\mathbb{P}}0.
\end{equation}
\end{lem}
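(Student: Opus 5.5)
The plan is to combine the tail estimate of Lemma \ref{lem: SRW} with a union bound over a geometric grid of times, and to use the monotonicity of $m\mapsto \Gamma_K^m(-b)$ to interpolate between grid points. Throughout, $b\in B$ and $K$ are fixed and
\[
\Gamma_K^m(-b)=\sum_{j=K}^m P_j(-b+S_j).
\]
Applying Lemma \ref{lem: SRW} with the constant sequence $z_m\equiv -b$ is legitimate, since $\log(1+\|b\|)/\log m\to 0$. Taking $p=2$, it gives for every $\varepsilon>0$
\[
\mathbb{P}\Big(\Big|\Gamma_K^m(-b)-\tfrac{5}{8\pi}\log m\Big|\ge \varepsilon\log m\Big)\le \frac{\eta(m)}{(\log m)^2},
\]
where $\eta(m)\to0$ as $m\to\infty$.

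\textbf{Grid and union bound.} Set $m_i:=2^i$, and let $I_n$ be the set of indices $i$ with $2^{i}\le n$ and $2^{i+1}\ge \lfloor n^\delta\rfloor$. Every grid point with $i\in I_n$ satisfies $m_i\ge n^\delta/4$, so $\log m_i\ge \tfrac{\delta}{2}\log n$ for large $n$, and $|I_n|\lesssim \log n$. The union bound then gives
\[
\mathbb{P}\Big(\exists i\in I_n\colon \Big|\tfrac{\Gamma_K^{m_i}(-b)}{\log m_i}-\tfrac{5}{8\pi}\Big|\ge\varepsilon\Big)\lesssim \log n\cdot\frac{\sup_{m\ge n^\delta/4}\eta(m)}{(\delta\log n/2)^2}\xrightarrow[n\to\infty]{}0.
\]
This is where the polynomial-in-$\log$ decay in Lemma \ref{lem: SRW} is essential: the bare convergence in probability \eqref{eq:cv-proba-sumP} would not survive a union over $\asymp\log n$ grid points.

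\textbf{Interpolation.} All summands $P_j(\cdot)$ are nonnegative, so $m\mapsto\Gamma_K^m(-b)$ is nondecreasing. Take $m$ with $\lfloor n^\delta\rfloor\le m\le n$, and choose $i$ with $m_i\le m\le m_{i+1}$; if $m$ exceeds the largest grid point, use the grid point just below together with the bound $\Gamma_K^{m}(-b)-\Gamma_K^{m_i}(-b)\lesssim \sum_{j=m_i}^{m}1/j\le\log 2$ coming from \eqref{upbdPnx}. Monotonicity yields
\[
\frac{\Gamma_K^{m_i}(-b)}{\log m_{i+1}}\le \frac{\Gamma_K^m(-b)}{\log m}\le \frac{\Gamma_K^{m_{i+1}}(-b)}{\log m_i},
\qquad
\frac{\log m_{i+1}}{\log m_i}=1+\frac{\log 2}{\log m_i}=1+O\Big(\frac{1}{\delta\log n}\Big).
\]
The same \eqref{upbdPnx} bound shows that $\Gamma_K^{m}(-b)/\log m$ is bounded deterministically, uniformly in $m$. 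Hence, on the complement of the event controlled by the union bound, every $m$ in the range satisfies
\[
\Big|\frac{\Gamma_K^m(-b)}{\log m}-\frac{5}{8\pi}\Big|\le \varepsilon+O\Big(\frac{1}{\delta\log n}\Big).
\]
Since $\varepsilon$ is arbitrary, \eqref{eq:cv-proba-max} follows.

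The only delicate point is the summability over the grid, which the rate in Lemma \ref{lem: SRW} handles with room to spare. Everything else is deterministic bookkeeping from monotonicity and \eqref{upbdPnx}.
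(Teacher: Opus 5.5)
Your argument is correct, but it is organized differently from the paper's.

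The paper uses a grid whose size does not depend on $n$: the points $\lfloor n^{k\varepsilon}\rfloor$ for $\lfloor\delta/\varepsilon\rfloor\le k\le\lfloor 1/\varepsilon\rfloor+1$, i.e.\ a grid of mesh $\varepsilon$ in the variable $\log m/\log n$. Monotonicity sandwiches $\Gamma_K^m(-b)/\log m$ between neighbouring grid values up to the factor $(k+1)/k$. This leaves an interpolation error of order $C/A_\varepsilon\approx C\varepsilon/\delta$, which does not vanish as $n\to\infty$. Since there are only finitely many grid points, however, the bare convergence in probability \eqref{eq:cv-proba-sumP} already controls the maximum over the grid, and the proof ends by letting $n\to\infty$ and then $\varepsilon\to0$.

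You instead take the dyadic grid, which has mesh $\log 2$ in $\log m$. Your interpolation error is therefore $O(1/(\delta\log n))$ and disappears with $n$. The price is $\asymp\log n$ grid points, which forces you to feed the quantitative tail $o(1)/(\log m)^2$ of Lemma \ref{lem: SRW} into a union bound.

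So your remark that the rate is ``essential'' applies only to your grid. The paper's choice shows the lemma follows from the qualitative weak law alone, which is all one gets from, e.g., \eqref{eq:cv-proba-Gamma} when $b=0$. In exchange, your route avoids the double limit and gives an explicit bound of order $\sup_{m\ge n^\delta/4}\eta(m)/(\delta^2\log n)$ on the probability of the bad event.

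One minor point: Lemma \ref{lem: SRW} is stated for $K\ge 2$. If $K=1$ is allowed, the extra term $P_1(-b+S_1)\le 1$ is harmless after dividing by $\log m$.
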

\begin{proof}
Let us write $\Gamma_K^n(-b)=\sum_{j=K}^n P_j(-b+S_j)$ as in Lemma \ref{lem: SRW}. 
Given an arbitrary $\varepsilon\in (0,\delta)$, we set $A_\varepsilon=\lfloor \delta/\varepsilon\rfloor$ and $B_\varepsilon=\lfloor 1/\varepsilon\rfloor$.
For every integer $A_\varepsilon\leq k\leq B_\varepsilon$, and for every $\lfloor n^{k\varepsilon}\rfloor\leq m\leq\lfloor n^{(k+1)\varepsilon}\rfloor$, by monotonicity we have
\[
    \frac{\Gamma_K^{\lfloor n^{k\varepsilon}\rfloor}(-b)}{\varepsilon(k+1)\log n}\leq \frac{\Gamma_K^{m}(-b)}{\log m}\leq \frac{\Gamma_K^{\lfloor n^{(k+1)\varepsilon}\rfloor}(-b)}{\varepsilon k\log n},
\]
so that
\begin{align}
    \Big|\frac{\Gamma_{K}^m(-b)}{\log m}- \frac{5}{8\pi}\Big| \leq &\,\Big|\frac{\Gamma_K^{\lfloor n^{k\varepsilon}\rfloor}(-b)}{\varepsilon(k+1)\log n}- \frac{5}{8\pi}\Big|\vee \Big|\frac{\Gamma_K^{\lfloor n^{(k+1)\varepsilon}\rfloor}(-b)}{\varepsilon k\log n}- \frac{5}{8\pi}\Big|\label{eq:Gamma-upper-bd}\\
    \leq &\, \Big|\frac{\Gamma_K^{\lfloor n^{k\varepsilon}\rfloor}(-b)}{\varepsilon k\log n} \cdot\frac{k}{k+1}- \frac{5}{8\pi}\Big|\vee \Big|\frac{\Gamma_K^{\lfloor n^{(k+1)\varepsilon}\rfloor}(-b)}{\varepsilon (k+1)\log n}\cdot \frac{k+1}{k}- \frac{5}{8\pi}\Big|.\nonumber
\end{align}
By \eqref{upbdPnx}, there exists some constant $C>0$ such that 
\[
    \sup_{n\geq K} \frac{\Gamma_K^n(-b)}{\log n}\leq C.
\]
Then, from 
\[
    \Big|\frac{\Gamma_K^{\lfloor n^{k\varepsilon}\rfloor}(-b)}{\varepsilon k\log n} \cdot\frac{k}{k+1}- \frac{5}{8\pi}\Big| \leq  \Big|\frac{\Gamma_K^{\lfloor n^{k\varepsilon}\rfloor}(-b)}{\varepsilon k\log n}- \frac{5}{8\pi}\Big|+\frac{1}{k+1} \frac{\Gamma_K^{\lfloor n^{k\varepsilon}\rfloor}(-b)}{\varepsilon k\log n} 
\]
we derive that 
\[
    \max_{A_\varepsilon\leq k\leq B_\varepsilon}\Big|\frac{\Gamma_K^{\lfloor n^{k\varepsilon}\rfloor}(-b)}{\varepsilon k\log n} \cdot\frac{k}{k+1}- \frac{5}{8\pi}\Big| \leq \max_{A_\varepsilon\leq k\leq B_\varepsilon}\Big|\frac{\Gamma_K^{\lfloor n^{k\varepsilon}\rfloor}(-b)}{\varepsilon k\log n}- \frac{5}{8\pi}\Big|+ \frac{C}{A_\varepsilon}.
\]
In the same way, we also have
\[
    \max_{A_\varepsilon\leq k\leq B_\varepsilon}\Big|\frac{\Gamma_K^{\lfloor n^{(k+1)\varepsilon}\rfloor}(-b)}{\varepsilon (k+1)\log n} \cdot\frac{k+1}{k}- \frac{5}{8\pi}\Big| \leq \max_{A_\varepsilon\leq k\leq B_\varepsilon}\Big|\frac{\Gamma_K^{\lfloor n^{(k+1)\varepsilon}\rfloor}(-b)}{\varepsilon (k+1)\log n}- \frac{5}{8\pi}\Big|+ \frac{C}{A_\varepsilon}.
\]
Going back to \eqref{eq:Gamma-upper-bd}, it yields that
\[
    \max_{\lfloor n^\delta\rfloor \le m \le n}\Big|\frac{1}{\log m}\sum_{j=K}^m P_j(-b+S_{j})- \frac{5}{8\pi}\Big|\leq \max_{A_\varepsilon\leq k\leq B_\varepsilon+1}\Big|\frac{\Gamma_K^{\lfloor n^{k\varepsilon}\rfloor}(-b)}{\varepsilon k\log n}- \frac{5}{8\pi}\Big|+ \frac{C}{A_\varepsilon}.
\]
By choosing $\varepsilon$ sufficiently small, we can make $C/A_\varepsilon$ as small as we want. Meanwhile, for fixed $\varepsilon$, 
\[
    \max_{A_\varepsilon\leq k\leq B_\varepsilon+1}\Big|\frac{\Gamma_K^{\lfloor n^{k\varepsilon}\rfloor}(-b)}{\varepsilon k\log n}- \frac{5}{8\pi}\Big|\xrightarrow[n\to\infty]{\mathbb{P}}0
\]
is a direct consequence of \eqref{eq:cv-proba-sumP}. The convergence \eqref{eq:cv-proba-max} readily follows. 
\end{proof}

Now we need another elementary result. 
\begin{lem}
    For a sequence of positive numbers $(y_n)_{ n\geq 1}$, if 
    \[
        \max_{\lfloor n^\delta\rfloor \le m\le  n}\Big|\frac{1}{\log m}\sum_{j=K}^{m}y_j -1 \Big|\xrightarrow[n\to\infty]{} 0,
    \]
    then 
    \[
        \sum_{j=\lfloor n^\delta\rfloor+1}^{n} \frac{y_j}{\log j} \xrightarrow[n\to\infty]{} \log \frac{1}{\delta}.
    \]
\end{lem}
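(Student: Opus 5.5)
The plan is Abel summation. Write $A_m:=\sum_{j=K}^m y_j$ and $N:=\lfloor n^\delta\rfloor$. Summation by parts gives
\[
\sum_{j=N+1}^{n}\frac{y_j}{\log j}=\frac{A_n}{\log n}-\frac{A_N}{\log(N+1)}+\sum_{m=N+1}^{n-1}A_m\Big(\frac{1}{\log m}-\frac{1}{\log(m+1)}\Big).
\]
The hypothesis says $A_m=(1+\eta_m)\log m$, where $\eta^*_n:=\max_{N\le m\le n}|\eta_m|\to0$. We then evaluate each term with this substitution.

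The two boundary terms are handled directly. First, $A_n/\log n=1+o_n(1)$. Second, $A_N/\log(N+1)=(1+\eta_N)\log N/\log(N+1)=1+o_n(1)$, since $\log N\sim\delta\log n\to\infty$. Together they contribute $o_n(1)$.

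For the main sum, put $d_m:=\frac{1}{\log m}-\frac{1}{\log(m+1)}>0$. Replacing $A_m$ by $\log m$ gives
\[
\sum_{m=N+1}^{n-1}\log m\cdot d_m=\sum_{m=N+1}^{n-1}\frac{\log(1+1/m)}{\log(m+1)}.
\]
The summand is $\frac{1}{m\log m}(1+O(1/\log m))$ uniformly for $m\ge N$. The sum is therefore $(1+o_n(1))\big(\log\log n-\log\log N\big)+o_n(1)$, and this tends to $\log\frac{\log n}{\delta\log n}=\log\frac1\delta$.

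The error from replacing $A_m$ by $\log m$ is bounded by $\eta^*_n\sum_{m=N+1}^{n-1}\log m\cdot d_m$. Since $d_m>0$, this is at most $\eta^*_n\,(\log\frac1\delta+o_n(1))\to0$. Collecting the three pieces gives the claim.

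The only point that needs care is this last step. The error must be controlled uniformly over the whole range $N\le m\le n$, which is exactly why the hypothesis is stated as a maximum over $m$ rather than as convergence along the single index $n$. The positivity of $d_m$ is what allows the uniform bound to be pulled out of the sum. Everything else is elementary asymptotics.
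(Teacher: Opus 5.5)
Your proposal is correct and follows essentially the same route as the paper: Abel summation with $A_m=\sum_{j=K}^m y_j$, boundary terms that are $o_n(1)$ by the hypothesis, and the main sum compared with $\sum 1/(m\log m)$ over $m$ from $\lfloor n^\delta\rfloor$ to $n$, which tends to $\log\frac1\delta$. Your handling of the error term is slightly more explicit than the paper's: you pull the uniform bound $\eta^*_n$ out of the sum using $d_m>0$, where the paper simply writes $1+o_n(1)$ uniformly.
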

\begin{proof}
We set $T_m:=\sum_{j=K}^m y_j$ for $m\geq K$. Then for sufficiently large $n$,
\begin{align*}
 \sum_{j=\lfloor n^\delta\rfloor+1}^{n} \frac{y_j}{\log j} = & \sum_{j=\lfloor n^\delta\rfloor+1}^{n} \frac{T_j-T_{j-1}}{\log j}= \sum_{j=\lfloor n^\delta\rfloor+1}^{n} \frac{T_j}{\log j} -\sum_{j=\lfloor n^\delta\rfloor}^{n-1}\frac{T_j}{\log(j+1)}\\
 =&\, \frac{T_{n}}{ \log n} -\frac{T_{\lfloor n^\delta\rfloor}}{\log(\lfloor n^\delta\rfloor+1)}+ \sum_{j=\lfloor n^\delta\rfloor+1}^{n-1}T_j\Big(\frac{1}{\log j}-\frac{1}{\log (j +1)}\Big)\\
 =&\, o_n(1) + \sum_{j=\lfloor n^\delta\rfloor+1}^{n-1} \frac{T_j}{\log j} \frac{\log(1+\frac1j)}{\log(j+1)}  \quad \textrm{ by assumption. }
\end{align*}
Uniformly for $\lfloor n^\delta\rfloor \le j \le n$, we have $\frac{\log(1+\frac1j)}{\log(j+1)}= \frac{1+o_n(1)}{j\log j}$ and $\frac{T_j}{\log j}=1+o_n(1)$. Therefore,
\[
 \sum_{j=\lfloor n^\delta\rfloor+1}^{n} \frac{y_j}{\log j} = o_n(1) + (1+o_n(1))\sum_{j=\lfloor n^\delta\rfloor+1}^{n-1} \frac{1}{j\log j}
\]
which converges to $\log\frac1\delta$ as $n\to \infty$.
\end{proof}
Recall that the convergence in probability can be characterized by the property that every subsequence has a further subsequence that converges to the same limit almost surely. 
Applying the last lemma to \eqref{eq:cv-proba-max}, we see that
\[
\frac{\sigma^2}{2}\sum_{j=\lfloor n^\delta\rfloor+1}^{ n}\frac{P_{j}(-b+S_{j})}{\alpha\log (j)} = \sum_{j=\lfloor n^\delta\rfloor+1}^{n}\frac{P_{j}(-b+S_{j})}{\frac{5}{8\pi}\log (j)} \xrightarrow[n\to\infty]{\mathbb{P}} \log \frac1\delta.
\]
Combining this with \eqref{goodL-badj} and \eqref{goodL-3}, we conclude that uniformly in $\|z_n\|\le K\sqrt{n}$,
\[
\Q^*_{0,z_n+B}\eqref{goodL-2}\xrightarrow[n\to\infty]{}0,
\]
which completes the proof of \eqref{goodL}.
\end{proof}

\section{Lalley--Zheng conjecture: proof of Theorem \ref{thm: Typical}}
\label{sec:lalley_zheng_conjecture_proof}
Recall that $u(n)$ is uniformly chosen among all alive individuals at generation $n$, and $T_n = Z_n (S_{u(n)})$ denotes the number of all alive individuals that are found at the position of $u(n)$ at the same time. 
Let us consider the joint Laplace transform of $(\frac{T_n}{\alpha\log n}, \frac{Z_n}{\sigma^2 n/2})$ conditionally on $Z_n\ge 1$: for all $\lambda,\xi \ge 0$, we define
\begin{align*}
L_n(\lambda,\xi):=&\,\E\left[\exp\Big(\!-\lambda \frac{T_n}{\alpha\log n}-\xi \frac{Z_n}{\sigma^2 n/2}\Big) \Big\vert Z_n \ge 1\right]\\
=&\,\frac{1}{\P(Z_n\ge 1)}\E\left[\exp\Big(\!-\lambda \frac{T_n}{\alpha\log n}-\xi \frac{Z_n}{\sigma^2 n/2}\Big)  \ind{Z_n \ge 1}\right].
\end{align*}
To prove Theorem \ref{thm: Typical}, we will fix $\lambda,\xi \ge 0$ and show that when $n\to\infty$, $L_n(\lambda,\xi)$ converges to $(1+\lambda)^{-2}(1+\xi)^{-1}$, which is exactly the product of the Laplace transform of $\Gamma(2,1)$ distribution and the Laplace transform of $\mathrm{Exp}(1)$ distribution. 

Recall the classical Kolmogorov estimate $\P(Z_n\ge 1)=(1+o_n(1))\frac{2}{\sigma^2 n}$ as $n\to \infty$.
As $\P(Z_n \ge \varepsilon n) = \P(Z_n/n \ge \varepsilon \vert Z_n \ge 1)\P(Z_n \ge 1)$, the classical Yaglom theorem implies that when $\varepsilon\to0+$,
\begin{equation*}
     \lim_{n\to \infty}\frac{\P(Z_n \ge \varepsilon n)}{\P(Z_n \ge 1)}=1+o_\varepsilon(1), \textrm { or equivalently }\lim_{n\to \infty}\frac{\P(1\leq Z_n < \varepsilon n)}{\P(Z_n \ge 1)}=o_\varepsilon(1).
\end{equation*}
It follows that 
\[
    \limsup_{n\to \infty} \frac{1}{\P(Z_n\ge 1)}\E\left[\exp\Big(\!-\lambda \frac{T_n}{\alpha\log n}-\xi \frac{Z_n}{\sigma^2 n/2}\Big) \ind{1\le Z_n < \varepsilon n}\right]=o_\varepsilon(1).
\]
Since 
\begin{align*}
L_n(\lambda,\xi) = &\, \frac{1}{\P(Z_n\ge 1)}\E\left[\exp\Big(\!-\lambda \frac{T_n}{\alpha\log n}-\xi \frac{Z_n}{\sigma^2 n/2}\Big)  \ind{Z_n \ge \varepsilon n}\right] \\
 &\, + \frac{1}{\P(Z_n\ge 1)}\E\left[\exp\Big(\!-\lambda \frac{T_n}{\alpha\log n}-\xi \frac{Z_n}{\sigma^2 n/2}\Big) \ind{1\le Z_n < \varepsilon n}\right],
\end{align*}
it suffices to study the convergence of 
\[
   L_n(\lambda,\xi,\varepsilon):=\E\left[\exp\Big(\!-\lambda \frac{T_n}{\alpha\log n}-\xi \frac{Z_n}{\sigma^2 n/2}\Big)  \ind{Z_n \ge \varepsilon n}\right].
\]
In the end, we will take $n\to \infty$ first and then let $\varepsilon\to0+$.
Observe that
\begin{align*}
\E\left[ \exp\Big(\!-\lambda \frac{T_n}{\alpha\log n}\Big) \Big\vert \mathcal{F}_n \right] = \ind{Z_n\ge1} \sum_{|u|=n}\frac{1}{Z_n} \exp\Big(\!-\lambda \frac{Z_n(S_u)}{\alpha \log n}\Big),
\end{align*}
from which we deduce by change of measure that
\begin{align}
L_n(\lambda,\xi,\varepsilon)= & \, \E_{\Q^*}\left[ \sum_{|u|=n}\frac{1}{Z_n} \exp\Big(\!-\lambda \frac{Z_n(S_u)}{\alpha \log n}\Big)\frac{1}{Z_n}\exp\Big(\!-\xi \frac{Z_n}{\sigma^2 n/2}\Big) \ind{Z_n \ge \varepsilon n} \right] \nonumber \\
=&\,\frac{1}{n}  \E_{\Q^*}\left[   \exp\Big(\!-\lambda \frac{Z_n(S_{w_n})}{\alpha \log n}\Big)\frac{n}{Z_n}\exp\Big(\!-\xi \frac{Z_n}{\sigma^2 n/2}\Big)\ind{Z_n \ge \varepsilon n} \right]. \nonumber 
\end{align}
For convenience, let $\B(w_k)=\cL(w_k)\cup\cR(w_k)$ denote the set of all siblings of $w_k$. Note that $\E_{\Q^*}[|\B(w_k)|]=\sigma^2$. Then, with the same notation used in Section \ref{sec:local_survival_probability} we have the decomposition
\begin{align*}
Z_n=& 1+\sum_{k=1}^n \sum_{u\in\B(w_k)}Z^u_{n-k} \textrm{ and }
Z_n(z)= \ind{S_{w_n}=z}+ \sum_{k=1}^n \sum_{u\in\B(w_k)}Z^u_{n-k}(z-S_u), \forall z\in\Z^2.
\end{align*}
For any subset (of real numbers) $I\subset[1,n]$, we set 
\[
Z_{I,n}:= \sum_{k\in I\cap\{1,\cdots,n\}} \sum_{u\in\B(w_k)}Z^u_{n-k},\textrm{ and } Z_{I,n}(z):=  \sum_{k\in I\cap\{1,\cdots,n\}} \sum_{u\in\B(w_k)}Z^u_{n-k}(z-S_u), \forall z\in\Z^2.
\]
We claim that under $\Q^*$, $Z_{(n-\frac{n}{\log n}, n],n}=o(n)$ and $ Z_{[1,n-\frac{n}{\log n}], n}(S_{w_n})=o(\log n)$ with high probability. To this end, we write $\delta := \eps^3$ with $\eps<\frac 1 2$. By Markov's inequality and the fact that $\E_{\Q^*}[Z^u_{n-k}]=1$,
\begin{equation}
\Q^*( Z_{(n-\frac{n}{\log n}, n],n} > \delta n) \le \frac{1}{\delta n}  \E_{\Q^*}\left[ \sum_{n-\frac{n}{\log n} < k\le n}\sum_{u\in\B(w_k)}Z^u_{n-k} \right]
\le \frac{1}{\delta n} \sigma^2 \frac{n}{\log n} \lesssim \frac{1}{\eps^3 \log n} = o_n(1).\label{goodZpart}
\end{equation}
Moreover, by Markov's inequality and \eqref{upbdPnx},
\begin{align}
\label{goodTpart}
\Q^*( Z_{[1,n-\frac{n}{\log n}], n}(S_{w_n}) > \delta \log n) \le &\, \frac{1}{\delta \log n}\E_{\Q^*}\left[ \sum_{k=1}^{\lfloor n- \frac{n}{\log n} \rfloor} \sum_{u\in\B(w_k)}Z^u_{n-k}(S_{w_n}-S_u) \right]\\
=& \, \frac{1}{\delta \log n} \E_{\Q^*}\left[ \sum_{k=1}^{\lfloor n-\frac{n}{\log n} \rfloor} \sum_{u\in\B(w_k)} P_{n-k}(S_{w_n}-S_u)\right] \nonumber\\
\lesssim & \,\frac{\sigma^2}{\delta \log n}  \sum_{k=1}^{\lfloor n-\frac{n}{\log n} \rfloor} \frac{1}{n-k} \lesssim \frac{\log \log n}{\eps^3 \log n}=o_n(1). \nonumber
\end{align}
Now, we define $L_n(\lambda,\xi,\eps,\delta)$ to be the following expectation
\[
\frac{1}{n}\E_{\Q^*}\left[   \exp(-\lambda \frac{Z_n(S_{w_n})}{\alpha \log n})\frac{n}{Z_n}\exp(-\xi \frac{Z_n}{\sigma^2 n/2})\ind{Z_n \ge \varepsilon n} \ind{ Z_{(n-\frac{n}{\log n}, n],n} \le \delta n, Z_{[1,n-\frac{n}{\log n}], n}(S_{w_n}) \le \delta \log n} \right].
\]
Note that
{\footnotesize
\begin{align*}
&\left\vert  \E_{\Q^*}\left[   \exp(-\lambda \frac{Z_n(S_{w_n})}{\alpha \log n})\frac{n}{Z_n}\exp(-\xi \frac{Z_n}{\sigma^2 n/2})\ind{Z_n \ge \varepsilon n} \right] - n L_n(\lambda,\xi, \eps,\delta)\right\vert \\
\le\, & \E_{\Q^*}\left[   \exp(-\lambda \frac{Z_n(S_{w_n})}{\alpha \log n})\frac{n}{Z_n}\exp(-\xi \frac{Z_n}{\sigma^2 n/2})\ind{ Z_n \ge \varepsilon n}\bigg( \ind{Z_{(n-\frac{n}{\log n}, n],n} > \delta n} + \ind{Z_{[1,n-\frac{n}{\log n}], n}(S_{w_n}) > \delta \log n} \bigg)\right]\\
\le\, & \frac1\eps \left( \Q^*(  Z_{(n-\frac{n}{\log n}, n],n} > \delta n) +\Q^*(Z_{[1,n-\frac{n}{\log n}], n}(S_{w_n}) > \delta \log n) \right),
\end{align*}
}
which is $o_n(1)$ according to \eqref{goodZpart} and \eqref{goodTpart}. 
Consequently,
\begin{equation}
\label{LaplaceT-1-2}
\frac{1}{\P(Z_n\ge 1)}|L_n(\lambda,\xi,\eps)-L_n(\lambda,\xi,\eps,\delta)|=o_n(1).
\end{equation}

Recall that 
\begin{align*}
Z_n(S_{w_n}) = &\, 1+Z_{[1,n-\frac{n}{\log n}],n}(S_{w_{n}}) + Z_{(n-\frac{n}{\log n},n], n}(S_{w_n}), \\
Z_n = &\, 1+Z_{[1,n-\frac{n}{\log n}],n}+Z_{(n-\frac{n}{\log n}, n],n}.
\end{align*}
On the event $\{ Z_n\ge \eps n, Z_{(n-\frac{n}{\log n}, n],n} \le \delta n, Z_{[1,n-\frac{n}{\log n}], n}(S_{w_n}) \le \delta \log n \} $, we apply the elementary inequalities
\begin{align*}
    |\exp(-\lambda(x+h))-\exp(-\lambda x)|\leq &\, \lambda h, \\
    \Big| \frac{1}{x+h}-\frac{1}{x}\Big|=\Big| \frac{h}{x(x+h)}\Big|\leq &\, \frac{2h}{\eps^2} \leq \frac{2\delta}{\eps^2}=2\eps \quad \textrm{ if } h\leq \delta\leq \frac{\eps}{2}, x+h\geq \eps,
\end{align*}
to obtain 
\begin{align*}
\Big|\exp\Big(\!-\lambda \frac{Z_n(S_{w_n})}{\alpha \log n}\Big) -\exp\Big(\!-\lambda \frac{1+Z_{(n-\frac{n}{\log n},n],n}(S_{w_n})}{\alpha \log n}\Big) \Big|&\leq  \, \lambda \frac{\delta}{\alpha} = O(\eps^3),\\
\Big|\exp\Big(\!-\xi \frac{Z_n}{\sigma^2 n/2}\Big) -\exp\Big(\!-\xi \frac{1+Z_{[1,n-\frac{n}{\log n}],n}}{\sigma^2 n/2}\Big) \Big|&\leq  \, \xi \frac{2\delta}{\sigma^2} = O(\eps^3),\\
\Big|\frac{n}{Z_n} - \frac{n}{1+Z_{[1,n-\frac{n}{\log n}],n}} \Big|& \leq 2\eps= O(\eps).
\end{align*}
Meanwhile, $n/Z_n\leq 1/\eps$ under the same event.
Therefore, we deduce that
\begin{equation}\label{LaplaceT-2}
\frac{1}{\P(Z_n\ge 1)}|L_n(\lambda,\xi,\eps,\delta)-\widetilde{L}_n(\lambda,\xi,\eps,\delta)|  = o_\eps(1),
\end{equation}
where $\widetilde{L}_n(\lambda,\xi,\eps,\delta)$ is defined by
{\footnotesize
\[
\frac{1}{n}\E_{\Q^*}\left[ \exp(-\lambda \frac{1+Z_{(n-\frac{n}{\log n},n],n}(S_{w_n})}{\alpha \log n}) \exp(-\xi \frac{1+Z_{[1,n-\frac{n}{\log n}],n}}{\sigma^2 n/2}) \frac{\ind{Z_n\ge \eps n,Z_{(n-\frac{n}{\log n}, n],n} \le \delta n, Z_{[1,n-\frac{n}{\log n}], n}(S_{w_n}) \le \delta \log n}}{\frac{1+Z_{[1,n-\frac{n}{\log n}],n}}{n} } \right].
\]
}

For the upper bound of the last expectation, as 
\[
    \big\{Z_n\ge \eps n,Z_{(n-\frac{n}{\log n}, n],n} \le \delta n\big\}\subset\big\{ 1+Z_{[1,n-\frac{n}{\log n}],n} \ge \frac{\eps}{2} n \big\},
\] 
we have
{\footnotesize
\begin{align*}
n\widetilde{L}_n(\lambda,\xi,\eps,\delta)\le  \E_{\Q^*}\left[ \exp(-\lambda \frac{Z_{(n-\frac{n}{\log n},n],n}(S_{w_n})}{\alpha \log n}) \exp(-\xi \frac{1+Z_{[1,n-\frac{n}{\log n}],n}}{\sigma^2 n/2})  \frac{n}{1+Z_{[1,n-\frac{n}{\log n}],n}} \ind{ 1+Z_{[1,n-\frac{n}{\log n}],n} \ge \frac{\eps}{2} n}\right].
\end{align*}}
The key observation here is that $Z_{(n-\frac{n}{\log n},n],n}(S_{w_n})$ is $\sigma((S_{w_k})_{k\le n}, (u, (S_v)_{u\leq v})_{u\in\cup_{n-\frac{n}{\log n}< k\le n}\B(w_k)})$-measurable, while $Z_{[1,n-\frac{n}{\log n}],n}$ is $\sigma((u, (Z^u_m)_{m\ge0})_{u\in\cup_{1\le k\le n-\frac{n}{\log n }} \B(w_k)})$-measurable. Hence, they are independent of each other under $\Q^*$. 
As a result,
{\footnotesize
\begin{align}\label{LaplaceT-3+}
n\widetilde{L}_n(\lambda,\xi,\eps,\delta)\le \E_{\Q^*}\left[ \exp(-\lambda \frac{Z_{(n-\frac{n}{\log n},n],n}(S_{w_n})}{\alpha \log n}) \right] \E_{\Q^*}\left[ \exp(-\xi \frac{1+Z_{[1,n-\frac{n}{\log n}],n}}{\sigma^2 n/2})  \frac{\ind{ 1+Z_{[1,n-\frac{n}{\log n}],n} \ge \frac{\eps}{2} n}}{\frac{1+Z_{[1,n-\frac{n}{\log n}],n}}{n} }  \right].
\end{align}
}

For the second expectation on the right-hand side of \eqref{LaplaceT-3+}, notice that 
\[
    \big\{1+Z_{[1,n-\frac{n}{\log n}],n} \ge \frac{\eps}{2} n, Z_{(n-\frac{n}{\log n}, n],n}\le \delta n\big\}\subset\big\{ Z_n \ge  \frac{\eps}{2} n\big\}.
\]
Using \eqref{goodZpart} and the same arguments in the derivation of \eqref{LaplaceT-2}, one sees that
\begin{align*}
&\E_{\Q^*}\left[ \exp(-\xi \frac{1+Z_{[1,n-\frac{n}{\log n}],n}}{\sigma^2 n/2})  \frac{\ind{ 1+Z_{[1,n-\frac{n}{\log n}],n} \ge \frac{\eps}{2} n}}{\frac{1+Z_{[1,n-\frac{n}{\log n}],n}}{n} }  \right]\\
=\,&\E_{\Q^*}\left[ \exp(-\xi \frac{1+Z_{[1,n-\frac{n}{\log n}],n}}{\sigma^2 n/2})  \frac{ \ind{ 1+Z_{[1,n-\frac{n}{\log n}],n} \ge \frac{\eps}{2} n, Z_{(n-\frac{n}{\log n}, n],n} \le \delta n}}{\frac{1+Z_{[1,n-\frac{n}{\log n}],n}}{n} }\right] + o_n(1)\\
=\,& \E_{\Q^*}\left[ \exp(-\xi \frac{Z_n}{\sigma^2 n/2})  \frac{n}{Z_n}\ind{ 1+Z_{[1,n-\frac{n}{\log n}],n} \ge \frac{\eps}{2} n, Z_{(n-\frac{n}{\log n}, n],n}\le \delta n}  \right] +o_\eps(1) +o_n(1)\\
\leq \, & \E_{\Q^*}\left[ \exp(-\xi \frac{Z_n}{\sigma^2 n/2})  \frac{n}{Z_n}\ind{Z_n \ge \frac{\eps}{2} n } \right] + o_\eps(1) +o_n(1).
\end{align*}
On the other hand, as $\{1+Z_{[1,n-\frac{n}{\log n}],n} \ge \frac{\eps}{2} n, Z_{(n-\frac{n}{\log n}, n],n}\le \delta n\}\supset \{Z_n \ge \eps n, Z_{(n-\frac{n}{\log n}, n],n}\le \delta n\}$, we get similarly the lower bound 
{\footnotesize
\begin{align*}
\E_{\Q^*}\left[ \exp(-\xi \frac{1+Z_{[1,n-\frac{n}{\log n}],n}}{\sigma^2 n/2})  \frac{\ind{ 1+Z_{[1,n-\frac{n}{\log n}],n} \ge \frac{\eps}{2} n}}{\frac{1+Z_{[1,n-\frac{n}{\log n}],n}}{n} }  \right]
\ge \,&\E_{\Q^*}\left[ \exp(-\xi \frac{Z_n}{\sigma^2 n/2})  \frac{n}{Z_n}\ind{ Z_n\ge \eps n, Z_{(n-\frac{n}{\log n}, n],n}\le \delta n}  \right]\\
=\, & \E_{\Q^*}\left[ \exp(-\xi \frac{Z_n}{\sigma^2 n/2})  \frac{n}{Z_n}\ind{ Z_n\ge \eps n }\right] + o_\eps(1) +o_n(1).
\end{align*}
}
Besides, observe that by change of measure,
\begin{align*}
\E_{\Q^*}\left[ \exp(-\xi \frac{Z_n}{\sigma^2 n/2})  \frac{n}{Z_n}\ind{ \frac{\eps}{2} n\le Z_n\le \eps n }\right] \leq \,&\E_{\Q^*}\left[ \frac{n}{Z_n}\ind{ \frac{\eps}{2} n\le Z_n\le \eps n }\right]\\
=\,& \E\left[ Z_n\frac{n}{Z_n}\ind{\frac{\eps}{2} n\le Z_n\le \eps n}\right]= n\P\Big( \frac{\eps}{2} n\le Z_n\le \eps n\Big)\\
=\,& \P\Big( \frac{\eps}{2} n\le Z_n\le \eps n \Big\vert Z_n\geq 1\Big)\cdot n\P(Z_n\geq 1).
\end{align*}
From the classical Kolmogorov estimate and the Yaglom theorem, it follows that
\[
    \limsup_{\eps\to 0+}\bigg(\limsup_{n\to \infty}\E_{\Q^*}\left[ \exp(-\xi \frac{Z_n}{\sigma^2 n/2})  \frac{n}{Z_n}\ind{ \frac{\eps}{2} n\le Z_n\le \eps n }\right]\bigg)=0.
\]
Combining this with the preceding upper and lower bounds, we obtain 
{\footnotesize
\begin{equation}\label{goodZ-asymp}
\E_{\Q^*}\left[\exp(-\xi \frac{1+Z_{[1,n-\frac{n}{\log n}],n}}{\sigma^2 n/2})  \frac{\ind{ 1+Z_{[1,n-\frac{n}{\log n}],n} \ge \frac{\eps}{2} n}}{\frac{1+Z_{[1,n-\frac{n}{\log n}],n}}{n} }  \right]=\E_{\Q^*}\left[ \exp(-\xi \frac{Z_n}{\sigma^2 n/2})  \frac{n}{Z_n}\ind{ Z_n\ge \eps n }\right] + o_\eps(1) +o_n(1).
\end{equation}
}

For the first expectation on the right hand side of \eqref{LaplaceT-3+}, we continue in the same fashion, using \eqref{goodTpart} instead of \eqref{goodZpart}, to get 
\begin{align}
&\E_{\Q^*}\left[ \exp(-\lambda \frac{Z_{(n-\frac{n}{\log n},n],n}(S_{w_n})}{\alpha \log n}) \right] \label{goodT-asymp}\\
=\,&\E_{\Q^*}\left[ \exp(-\lambda \frac{Z_{(n-\frac{n}{\log n},n],n}(S_{w_n})}{\alpha \log n})\ind{ Z_{[1,n-\frac{n}{\log n}], n}(S_{w_n}) \le \delta \log n}\right]+o_n(1) \nonumber\\
=\,& \E_{\Q^*}\left[ \exp(-\lambda \frac{Z_n(S_{w_n})}{\alpha \log n}) \ind{ Z_{[1,n-\frac{n}{\log n}], n}(S_{w_n}) \le \delta \log n }\right] +o_\eps(1) + o_n(1) \nonumber\\
=\,& \E_{\Q^*}\left[ \exp(-\lambda \frac{Z_n(S_{w_n})}{\alpha \log n})\right] +o_\eps(1) +o_n(1), \nonumber
\end{align}
where in the last line, we have used \eqref{goodTpart} again. 
We hence end up with
\begin{align}\label{LaplaceT-upbd}
n\widetilde{L}_n(\lambda,\xi, \eps,\delta)\le  \E_{\Q^*}\left[ \exp(-\lambda \frac{Z_n(S_{w_n})}{\alpha \log n}) \right]\E_{\Q^*}\left[ \exp(-\xi \frac{Z_n}{\sigma^2 n/2})  \frac{n}{Z_n}\ind{ Z_n\ge \eps n }\right] + o_\eps(1) +o_n(1).
\end{align}

For the lower bound of $n\widetilde{L}_n(\lambda,\xi,\eps,\delta)$, by use of \eqref{goodZpart} and \eqref{goodTpart} again, we get
{\footnotesize
\begin{align*}
&n\widetilde{L}_n(\lambda,\xi,\eps,\delta)\ge\\
& \E_{\Q^*}\left[ \exp(-\lambda \frac{1+Z_{(n-\frac{n}{\log n},n],n}(S_{w_n})}{\alpha \log n}) \exp(-\xi \frac{1+Z_{[1,n-\frac{n}{\log n}],n}}{\sigma^2 n/2})  \frac{\ind{1+Z_{[1,n-\frac{n}{\log n}],n}\ge \eps n,Z_{(n-\frac{n}{\log n}, n],n} \le \delta n, Z_{[1,n-\frac{n}{\log n}], n}(S_{w_n}) \le \delta \log n}}{\frac{1+Z_{[1,n-\frac{n}{\log n}],n}}{n} } \right]\\
&=\,\E_{\Q^*}\left[ \exp(-\lambda \frac{1+Z_{(n-\frac{n}{\log n},n],n}(S_{w_n})}{\alpha \log n}) \exp(-\xi \frac{1+Z_{[1,n-\frac{n}{\log n}],n}}{\sigma^2 n/2})  \frac{\ind{1+Z_{[1,n-\frac{n}{\log n}],n}\ge \eps n}}{\frac{1+Z_{[1,n-\frac{n}{\log n}],n}}{n} } \right]+o_n(1).
\end{align*}
}
By independence, it implies that
{\footnotesize
\begin{equation*}
n\widetilde{L}_n(\lambda,\xi,\eps,\delta)
\ge  \E_{\Q^*}\left[ \exp(-\lambda \frac{1+Z_{(n-\frac{n}{\log n},n],n}(S_{w_n})}{\alpha \log n}) \right]\E_{\Q^*}\left[\exp(-\xi \frac{1+Z_{[1,n-\frac{n}{\log n}],n}}{\sigma^2 n/2})   \frac{\ind{ 1+Z_{[1,n-\frac{n}{\log n}],n} \ge \eps n}}{\frac{1+Z_{[1,n-\frac{n}{\log n}],n}}{n} }  \right]+o_n(1).
\end{equation*}
}
Applying \eqref{goodZ-asymp} with $2\eps$ instead of $\eps$ and by \eqref{goodT-asymp}, we have
\begin{equation}\label{LaplaceT-3-}
n\widetilde{L}_n(\lambda,\xi,\eps,\delta)
\ge \E_{\Q^*}\left[ \exp(-\lambda \frac{Z_n(S_{w_n})}{\alpha \log n}) \right] \E_{\Q^*}\left[  \exp(-\xi \frac{Z_n}{\sigma^2 n/2})  \frac{n}{Z_n} \ind{ Z_n \ge 2\eps n} \right]+o_\eps(1)+o_n(1).
\end{equation}

Finally, it remains to find the asymptotics of 
\[
   \E_{\Q^*}\left[ \exp(-\xi \frac{Z_n}{\sigma^2 n/2})  \frac{n}{Z_n} \ind{ Z_n \ge \eps n}  \right] \quad \textrm{ and } \quad \E_{\Q^*}\left[ \exp(-\lambda \frac{Z_n(S_{w_n})}{\alpha \log n}) \right] .
\]
For the first one, by change of measure, 
\begin{align*}
    \E_{\Q^*}\left[ \exp(-\xi \frac{Z_n}{\sigma^2 n/2})  \frac{n}{Z_n} \ind{ Z_n \ge \eps n}   \right]=&\, \E\left[ Z_n \exp(-\xi \frac{Z_n}{\sigma^2 n/2})  \frac{n}{Z_n} \ind{ Z_n \ge \eps n}   \right] \\
    =&\,  n\E\left[  \exp(-\xi \frac{Z_n}{\sigma^2 n/2}) \ind{ Z_n \ge \eps n}   \right]\\
    =&\, \E\left[  \exp(-\xi \frac{Z_n}{\sigma^2 n/2}) \ind{ Z_n \ge \eps n} \Big \vert Z_n\geq 1  \right]\times n\P(Z_n\geq 1).
\end{align*}
Using again Kolmogorov's estimate and the classical Yaglom theorem, we see that
\begin{equation}
\label{Z-asymp}
\lim_{\eps\to 0+}\lim_{n\to \infty}\E_{\Q^*}\left[ \exp(-\xi \frac{Z_n}{\sigma^2 n/2})  \frac{n}{Z_n} \ind{ Z_n \ge \eps n}   \right]= \frac{1}{1+\xi}\frac{2}{\sigma^2}.
\end{equation}
For the second one, it follows from the spinal decomposition that
\begin{align*}
&\E_{\Q^*}\left[ \exp(-\lambda \frac{Z_n(S_{w_n})}{\alpha \log n}) \right] =  \sum_{z\in\Z^2}\E_{\Q^*}\left[\ind{S_{w_n}=z} \exp(-\lambda \frac{Z_n(z)}{\alpha \log n}) \right] \\
=& \sum_{z\in\Z^2}\E_{\Q^*}\left[\sum_{|u|=n}\ind{w_n=u} \ind{S_{w_n}=z} \exp(-\lambda \frac{Z_n(z)}{\alpha \log n}) \right]  \\
=&\sum_{z\in\Z^2}\E\left[\sum_{|u|=n}\ind{S_u=z} \exp(-\lambda \frac{Z_n(z)}{\alpha \log n}) \right] = \sum_{z\in\Z^2}\E\left[ Z_n(z) \exp(-\lambda \frac{Z_n(z)}{\alpha \log n}) \right]. 
\end{align*}
Here we separate the last sum into two cases: $\|z\|\le K\sqrt{n}$ and $\|z\|> K\sqrt{n}$. It is immediate that 
\begin{align*}
\sum_{z\in\Z^2:\|z\| >K\sqrt{n}}\E\left[ Z_n(z) \exp(-\lambda \frac{Z_n(z)}{\alpha \log n}) \right] \le & \sum_{z\in\Z^2:|z| >K\sqrt{n}}\E\left[ Z_n(z)\right]\\
=& \sum_{z\in\Z^2:\|z\| >K\sqrt{n}} P_n(z) = \mathbb{P}(\|S_n\|> K\sqrt{n}) =o_K(1) 
\end{align*}
when $K\to \infty$.
We thus get 
\begin{align*}
\E_{\Q^*}\left[ \exp(-\lambda \frac{Z_n(S_{w_n})}{\alpha \log n}) \right] = &  \sum_{z\in\Z^2: \|z\| \le K\sqrt{n}}\E\left[ Z_n(z) \exp(-\lambda \frac{Z_n(z)}{\alpha \log n}) \right] +o_K(1)\\
=& \alpha \log n  \sum_{z\in\Z^2: \|z\| \le K\sqrt{n}}\E\left[ h_\lambda\Big(\frac{ Z_n(z) }{\alpha\log n} \Big)\Big\vert Z_n(z) \ge 1 \right]u_n(z) +o_K(1)
\end{align*}
with $h_\lambda(r) := \ind{r>0}re^{-\lambda r}$. By \eqref{asymp-unx}, uniformly for $\|z\|\le K\sqrt{n}$,
\[
u_n(z) = \frac{P_n(z)}{\alpha\log n} (1+o_n(1)).
\]
As the function $h_\lambda$ is Lipschitz continuous with Lipschitz constant 1, from the uniform convergence \eqref{dW-uniformcvg} we deduce that uniformly in $\|z\|\le K\sqrt{n}$,
\[
\E\left[ h_\lambda\Big(\frac{ Z_n(z) }{\alpha\log n} \Big)\Big\vert Z_n(z) \ge 1 \right] = \int_0^\infty h_\lambda (r) e^{-r}\d r + o_n(1)= \frac{1}{(1+\lambda)^2} (1+o_n(1)).
\]
As a consequence,
\begin{align*}
\E_{\Q^*}\left[ \exp(-\lambda \frac{Z_n(S_{w_n})}{\alpha \log n}) \right] = &\, (1+o_n(1)) \frac{1}{(1+\lambda)^2}  \sum_{z\in\Z^2: \|z\| \le K\sqrt{n}} P_n(z) +o_K(1)\\
=&\, (1+o_n(1)) \frac{1}{(1+\lambda)^2} \mathbb{P}(\|S_n\|\leq K\sqrt{n})   +o_K(1)\\
=&\, (1+o_n(1))\frac{1}{(1+\lambda)^2}(1+o_K(1)) +o_K(1) .
\end{align*}
Since $K$ is irrelevant to the last expectation on the left-hand side, it yields that
\begin{equation}
    \label{T-asymp}
    \E_{\Q^*}\left[ \exp(-\lambda \frac{Z_n(S_{w_n})}{\alpha \log n}) \right] =\frac{1}{(1+\lambda)^2}(1+o_n(1)).
\end{equation}
Taking into account \eqref{Z-asymp}, \eqref{T-asymp}, \eqref{LaplaceT-upbd} and \eqref{LaplaceT-3-}, we have by Kolmogorov's estimate
\[
\lim_{\eps\to 0+}\lim_{n\to \infty} \frac{1}{\P(Z_n\geq 1)} \widetilde{L}_n(\lambda,\xi,\eps,\delta) =  \frac{1}{(1+\lambda)^2} \frac{1}{1+\xi}.
\]
Plugging it into \eqref{LaplaceT-2} and then into \eqref{LaplaceT-1-2} yields that
\[
\lim_{\eps\to 0+}\lim_{n\to \infty}  \frac{1}{\P(Z_n\geq 1)} L_n(\lambda,\xi,\eps) = \frac{1}{(1+\lambda)^2} \frac{1}{1+\xi}.
\]
As explained at the beginning of this section, this implies the convergence $L_n(\lambda,\xi)\to \frac{1}{(1+\lambda)^2} \frac{1}{1+\xi}$ when $n\to\infty$,
which completes the proof of Theorem \ref{thm: Typical}.

\section{Proof of Theorem \ref{thm: Span}}
\label{sec:proof_of_theorem_ref_thm_span}

Throughout this section, the numbers $K, \lambda, \xi>0$ are always fixed. It suffices to consider the CBRW starting from the origin of $\Z^2$. We will follow the same strategy as in the proof of Theorem \ref{thm: Typical}, by showing that uniformly for $\|z_n\| \le K\sqrt{n}$,
\begin{equation}\label{LaplaceZ}
\mathbb{L}_n(\lambda,\xi) :=\E\left[ \exp(-\lambda \frac{Z_n}{\sigma^2 n/2} ) \exp(-\xi \frac{Z_n(z_n)}{\alpha \log n} )\Big\vert Z_n(z_n) \ge 1 \right] \xrightarrow[n\to \infty]{} \frac{1}{(1+\lambda)^2}\frac{1}{1+\xi}.
\end{equation}
Since the arguments will be carried out in parallel with the previous section, we will only present the main steps and leave some minor details to the reader. 
Note that
\[
\mathbb{L}_n(\lambda,\xi) = \frac{1}{u_n(z_n)}\E\left[ \exp(-\lambda \frac{Z_n}{\sigma^2 n/2} ) \exp(-\xi \frac{Z_n(z_n)}{\alpha \log n} ) \ind{Z_n(z_n) \ge 1} \right].
\]
Then \eqref{dW-uniformcvg} implies that uniformly for $\|z_n\|\le K\sqrt{n}$, when $\varepsilon\to0+$,
\begin{equation*}
     \lim_{n\to \infty}\frac{\P(1\le Z_n(z_n) < \eps\log n) }{u_n(z_n)}=\lim_{n\to \infty}\P(1\le Z_n(z_n) < \eps\log n\vert Z_n(z_n)\geq 1)=  o_\varepsilon(1).
\end{equation*}
In view of \eqref{asymp-unx}, it follows that 
\begin{align*}
\lim_{n\to \infty} \mathbb{L}_n(\lambda,\xi) = & \lim_{\eps\to 0+}\lim_{n\to \infty}\frac{1}{u_n(z_n)}\E\left[ \exp(-\lambda \frac{Z_n}{\sigma^2 n/2} ) \exp(-\xi \frac{Z_n(z_n)}{\alpha \log n} )  \ind{Z_n(z_n) \ge \eps\log n} \right] \\
=&  \lim_{\eps\to 0+}\lim_{n\to \infty}\frac{\alpha\log n}{P_n(z_n)}\E\left[ \exp(-\lambda \frac{Z_n}{\sigma^2 n/2} ) \exp(-\xi \frac{Z_n(z_n)}{\alpha \log n} )  \ind{Z_n(z_n) \ge \eps\log n} \right],
\end{align*}
after assuming the existence of all these limits.
By change of measure, 
\begin{align}\label{LaplaceZ-1}
&\frac{\alpha\log n}{P_n(z_n)}\E\left[ \exp(-\lambda \frac{Z_n}{\sigma^2 n/2} ) \exp(-\xi \frac{Z_n(z_n)}{\alpha \log n} )  \ind{Z_n(z_n) \ge \eps\log n} \right]\\
=\,&\frac{\alpha\log n}{P_n(z_n)}\E\bigg[ \sum_{|u|=n}\ind{ S_u = z_n}\frac{1}{Z_n(z_n)} \exp(-\lambda \frac{Z_n}{\sigma^2 n/2} )  \exp(-\xi \frac{Z_n(z_n)}{\alpha \log n} ) \ind{Z_n(z_n) \ge \eps\log n} \bigg] \nonumber \\
=\,& \frac{1}{P_n(z_n)}\E_{\Q^*}\bigg[ \ind{ S_{w_n}=z_n} \exp(-\lambda \frac{Z_n}{\sigma^2 n/2} ) \frac{\alpha \log n}{Z_n(z_n)} \exp(-\xi \frac{Z_n(z_n)}{\alpha \log n} ) \ind{Z_n(z_n) \ge \eps\log n} \bigg]. \nonumber
\end{align}
We keep using the same notation introduced in the previous section. In particular, let us recall that $\delta=\eps^3$.
Similarly as \eqref{goodTpart}, by Markov's inequality and \eqref{upbdPnx}, one sees that
\begin{align}\label{goodTpart-v2}
&\frac{1}{P_n(z_n)}\E_{\Q^*}\left[ \ind{ S_{w_n}=z_n}\ind{Z_{[1, n-\frac{n}{\log n}],n}(z_n) > \delta \log n } \right]\\
\le \,& \frac{1}{P_n(z_n)} \frac{1}{\delta \log n}\E_{\Q^*}\left[  \ind{ S_{w_n}=z_n}\sum_{k=1}^{\lfloor n-n/\log n\rfloor} \sum_{u\in\B(w_k)}Z^u_{n-k}(z_n-S_u) \right] \nonumber\\
=\,&   \frac{1}{P_n(z_n)} \frac{1}{\delta \log n}\E_{\Q^*}\left[  \ind{ S_{w_n}=z_n}\sum_{k=1}^{\lfloor n-n/\log n\rfloor} \sum_{u\in\B(w_k)} P_{n-k}(S_{w_n}-S_u)\right] \nonumber\\
\lesssim \,& \frac{\E_{\Q^*}[\ind{S_{w_n}=z_n }]}{P_n(z_n)} \frac{\sigma^2\sum_{k=1}^{\lfloor n-n/\log n\rfloor}\frac{1}{n-k}}{\delta \log n} = \frac{\sum_{k=1}^{\lfloor n-n/\log n\rfloor}\frac{1}{n-k}}{\delta \log n} = o_n(1). \nonumber
\end{align}
Meanwhile, the analog of \eqref{goodZpart} becomes 
\begin{align}\label{goodZpart-v2}
\frac{1}{P_n(z_n)}\E_{\Q^*}\left[ \ind{ S_{w_n}=z_n}\ind{Z_{(n-\frac{n}{\log n}, n],n} > \delta n } \right]
\le  & \,\frac{1}{P_n(z_n)} \frac{1}{\delta n}  \E_{\Q^*}\left[ \ind{S_{w_n}=z_n }\sum_{n-\frac{n}{\log n} < k\le n}\sum_{u\in\B(w_k)}Z^u_{n-k}\right] \\
\lesssim & \frac{\E_{\Q^*}[\ind{S_{w_n}=z_n }]}{P_n(z_n)}\frac{\sigma^2 n/\log n}{\delta n} =\frac{\sigma^2 }{\delta \log n} =o_n(1).\nonumber
\end{align}
We define $\mathbb{L}_n(\lambda,\xi, \eps,\delta)$ to be the product of $\frac{1}{P_n(z_n)}$ and
{\footnotesize
\[
    \E_{\Q^*}\left[ \ind{ S_{w_n}=z_n} \exp(-\lambda \frac{Z_n}{\sigma^2 n/2} )\frac{\alpha \log n}{Z_n(z_n)} \exp(-\xi \frac{Z_n(z_n)}{\alpha \log n} )\ind{Z_n(z_n) \ge \eps\log n, Z_{[1, n-\frac{n}{\log n}],n}(z_n) \le \delta \log n, Z_{(n-\frac{n}{\log n}, n],n} \le \delta n  }\right].
\]
}
Combining \eqref{goodTpart-v2}, \eqref{goodZpart-v2} and \eqref{LaplaceZ-1}, we see that it suffices to find the limit of $\mathbb{L}_n(\lambda,\xi, \eps,\delta)$.

On the event $\{Z_n(z_n) \ge \eps\log n, Z_{[1, n-\frac{n}{\log n}],n}(z_n) \le \delta \log n, Z_{(n-\frac{n}{\log n}, n],n} \le \delta n\}$, similarly as \eqref{LaplaceT-2},
we can approximate $Z_n$ by $1+Z_{[1, n-\frac{n}{\log n}], n}$, and $Z_n(z_n)$ by $1+Z_{(n-\frac{n}{\log n},n],n}(z_n)$.
This reduces the study of $\mathbb{L}_n(\lambda,\xi, \eps,\delta)$ to that of 
\begin{multline*}
\widetilde{\mathbb{L}}_n(\lambda,\xi, \eps, \delta) := 
\frac{1}{P_n(z_n)}\E_{\Q^*}\left[ \ind{ S_{w_n}=z_n} \exp(-\lambda \frac{1+Z_{[1, n-\frac{n}{\log n}], n}}{\sigma^2 n/2} )  \frac{\alpha \log n}{1+ Z_{(n-\frac{n}{\log n},n],n}(z_n)} \right.\\
\left. \times \exp(-\xi \frac{1+ Z_{(n-\frac{n}{\log n},n],n}(z_n)}{\alpha \log n} )\ind{Z_n(z_n) \ge \eps\log n}  \ind{ Z_{[1, n-\frac{n}{\log n}],n}(z_n) \le \delta \log n, Z_{(n-\frac{n}{\log n}, n],n} \le \delta n  }\right].
\end{multline*}
Similarly as \eqref{LaplaceT-upbd}, by independence between $(S_{w_n}, Z_{( n-\frac{n}{\log n},n],n}(z_n) )$ and $Z_{[1, n-\frac{n}{\log n}], n}$, and then by use of \eqref{goodZpart-v2} and \eqref{goodTpart-v2} again, we deduce that
{\footnotesize
\begin{align}\label{LaplaceZ-upbd}
&\widetilde{\mathbb{L}}_n(\lambda,\xi, \eps, \delta)\\
\le & \frac{1}{P_n(z_n)}\E_{\Q^*}\left[ \ind{ S_{w_n}=z_n} \frac{\alpha\log n}{Z_n(z_n)} \exp(-\xi \frac{Z_n(z_n)}{\alpha \log n} )\ind{Z_n(z_n) \ge \eps \log n}\right] \E_{\Q^*}\left[  \exp(-\lambda \frac{Z_n}{\sigma^2 n/2})\right] + o_n(1) + o_\eps(1).\nonumber
\end{align}
}
Similarly as \eqref{LaplaceT-3-}, for the same reason mentioned above \eqref{LaplaceZ-upbd},
we obtain the lower bound
{\footnotesize
\begin{align}\label{LaplaceZ-lowerbd}
&\widetilde{\mathbb{L}}_n(\lambda, \xi,\eps, \delta)\\
\ge & \frac{1}{P_n(z_n)}\E_{\Q^*}\left[ \ind{ S_{w_n}=z_n} \frac{\alpha\log n}{Z_n(z_n)} \exp(-\xi \frac{Z_n(z_n)}{\alpha \log n} )\ind{Z_n(z_n) \ge 2\eps \log n}\right] \E_{\Q^*}\left[  \exp(-\lambda \frac{Z_n}{\sigma^2 n/2})\right] + o_n(1) + o_\eps(1).\nonumber
\end{align}
}
Comparing the upper and lower bounds, it remains to investigate
\[
    \frac{1}{P_n(z_n)}\E_{\Q^*}\left[ \ind{ S_{w_n}=z_n} \frac{\alpha\log n}{Z_n(z_n)}\exp(-\xi \frac{Z_n(z_n)}{\alpha \log n} )\ind{Z_n(z_n) \ge \eps \log n}\right] \textrm{ and }\E_{\Q^*}\left[  \exp(-\lambda \frac{Z_n}{\sigma^2n/2})\right] .
\] 
To this end, by change of measure, Kolmogorov's estimate and the Yaglom theorem, one gets 
\begin{align}\label{Z+asymp}
\E_{\Q^*}\left[  \exp(-\lambda \frac{Z_n}{ \sigma^2 n/2 })\right] =\,& \E\left[Z_n\exp(-\lambda \frac{Z_n}{ \sigma^2 n/2 })\right]= n \E\left[\frac{Z_n}{n}\exp(-\lambda \frac{Z_n}{ \sigma^2 n/2 })\ind{Z_n\ge1}\right]\\
=\,& \frac{\sigma^2 n}{2} \P(Z_n \ge 1 ) \E\left[\frac{Z_n}{ \sigma^2 n/2 }\exp(-\lambda \frac{Z_n}{ \sigma^2 n/2 })\Big\vert Z_n\ge1\right] \nonumber\\
=\,&(1+o_n(1)) \Big(\int_0^\infty re^{-\lambda r} e^{-r} \d r+o_n(1)\Big)= \frac{1}{(1+\lambda)^2}(1+o_n(1)) . \nonumber
\end{align}
On the other hand, by change of measure, we have
\begin{align}
&\frac{1}{P_n(z_n)}\E_{\Q^*}\left[ \ind{ S_{w_n}=z_n} \frac{\alpha\log n}{Z_n(z_n)}\exp(-\xi \frac{Z_n(z_n)}{\alpha \log n} )\ind{Z_n(z_n) \ge \eps \log n}\right]\nonumber \\
=\,& \frac{1}{P_n(z_n)}\E_{\Q^*}\left[\sum_{|u|=n}\ind{w_n=u} \ind{S_{w_n}=z_n}  \frac{\alpha\log n}{Z_n(z_n)}\exp(-\xi \frac{Z_n(z_n)}{\alpha \log n} )\ind{Z_n(z_n) \ge \eps \log n}\right] \nonumber \\
=\,&\frac{\alpha \log n}{P_n(z_n)} \E\left[ \exp(-\xi \frac{Z_n(z_n)}{\alpha \log n} )\ind{Z_n(z_n) \ge \eps \log n}\right] \nonumber\\
=\,&  \frac{\alpha\log n\times u_n(z_n)}{P_n(z_n) } \E\left[\exp(-\xi \frac{Z_n(z_n)}{\alpha \log n} )\ind{Z_n(z_n) \ge \eps \log n}\Big\vert Z_n(z_n) \ge 1\right] .\nonumber
\end{align}
Using \eqref{asymp-unx} and \eqref{dW-uniformcvg}, one sees that uniformly in $\|z_n\|\le K\sqrt{n}$,
\begin{equation}\label{Znz+asymp}
 \lim_{\eps\to0+}\lim_{n\to \infty}   \frac{1}{P_n(z_n)}\E_{\Q^*}\left[ \ind{ S_{w_n}=z_n} \frac{\alpha\log n}{Z_n(z_n)}\exp(-\xi \frac{Z_n(z_n)}{\alpha \log n} )\ind{Z_n(z_n) \ge \eps \log n}\right]=\frac{1}{1+\xi}.
\end{equation}
Taking into account \eqref{LaplaceZ-upbd}, \eqref{LaplaceZ-lowerbd}, \eqref{Z+asymp} and \eqref{Znz+asymp},  we end up with
\[
 \lim_{\eps\to0+}\lim_{n\to \infty}  \widetilde{\mathbb{L}}_n(\lambda, \xi,\eps, \delta) =\frac{1}{(1+\lambda)^2} \frac{1}{1+\xi},
\]
which yields that $\mathbb{L}_n(\lambda,\xi)\to \frac{1}{(1+\lambda)^2} \frac{1}{1+\xi}$ when $n\to\infty$.
Theorem \ref{thm: Span} is therefore established.

\appendix
\section{Appendix}\label{appendix}

\begin{proof}[Proof of Lemma \ref{lem: SRW}] 
In view of \eqref{upbdPnx}, we only need to prove \eqref{eq: SRWcvg}. Let us fix $\varepsilon\in (0,1)$.

Inside this proof, we abuse slightly our notation by writing 
\[
    p_t(x):=\frac{1}{2\pi t}e^{-\frac{\|x\|^2}{2t}}, \qquad \forall t>0, x\in \R^2.
\]
Recall from \eqref{Pnx} that 
\[
    P_n(x) = \frac52 \,p_n(\sqrt{\frac52}x)+\frac{O(1)}{n^2}
\] 
uniformly in $x\in \Z^2$.
It follows that
\[
\Gamma_K^n(z_n)=\sum_{j=K}^n P_j(z_n +S_j) =\frac52  \sum_{j=K}^n p_n\Big(\sqrt{\frac52}z_n + \sqrt{\frac52}S_j\Big) + O(1).
\]
So it suffices to treat 
\[
\widehat{\Gamma}_K^n(z_n):=  \sum_{j=K}^n p_j\Big( z_n + \sqrt{\frac52}S_j\Big) 
\]
for any sequence $(z_n)_{n\geq 1}$ in $\R^2$ such that $\frac{\log (1+\|z_n\|)}{\log n} \to 0$ as $n\to \infty$.
Since there exists a constant $C_0>0$ such that $\sup_x p_t(x)\le C_0/t$, by taking $\epsilon_0\leq \frac{\epsilon}{2C_0}$, we have
\[
\widehat{\Gamma}_K^{\lfloor n^{\epsilon_0}\rfloor}(z_n) =  \sum_{j=K}^{\lfloor n^{\epsilon_0}\rfloor} p_j\Big( z_n + \sqrt{\frac52}S_j\Big) \le \frac{\epsilon}{2}\log n.
\]
The same arguments imply that if \eqref{eq: SRWcvg} holds, we can replace the fixed integer $K$ by $\lfloor \log n\rfloor+1$.
So it remains to study 
\[
    \widehat{\Gamma}_{\lfloor n^{\epsilon_0}\rfloor+1}^n (z_n)= \sum_{j=\lfloor n^{\epsilon_0}\rfloor+1}^{n} p_j\Big( z_n + \sqrt{\frac52}S_j\Big). 
\]

Let $(B_t)_{t\ge0}$ be a standard Brownian motion in $\R^2$ with transition densities $p_t(x)$. Donsker's invariance principle says that 
\[
\Big(\sqrt{\frac52} \frac{S_{\lfloor nt \rfloor }}{\sqrt{n}}\Big)_{t\ge0}
\]
converges in law to $(B_t)_{t\ge0}$.  
By Koml\'os--Major--Tusn\'ady strong invariance principle and some standard estimates on the Brownian motion, we can define the random walk $(S_n)$ and the Brownian motion $(B_t)$ on the same probability space $(\Omega, \mathbb{P})$, in such a way that for some constant $C>0$ and for sufficiently small (fixed) $\epsilon_0\in(0,\frac{1}{2})$, the event
\begin{equation}
\label{eq:KMT-set}
 E_n:=\bigg\{\sup_{0\le t\le n}\Big\|\sqrt{\frac52} S_{\lfloor t\rfloor} - B_t\Big\| \le C\log n\bigg\}\cap \Big\{\| B_s\| \le s^{2/3}, \forall s\in [n^{\epsilon_0}, n]\Big\}   
\end{equation}
happens with probability $1-O(\frac1n)$. On this event $E_n$ with high probability, let us compare
\[
\sum_{j=\lfloor n^{\epsilon_0}\rfloor+1}^{n} p_j\Big(z_n + \sqrt{\frac52}S_j\Big) \textrm{ and } \int_{\lfloor n^{\epsilon_0}\rfloor+1}^n p_s(B_s)\d s .
\]
Observe that
\begin{align*}
& \bigg|\sum_{j=\lfloor n^{\epsilon_0}\rfloor+1}^{n} p_j\Big(z_n + \sqrt{\frac52}S_j\Big) -  \int_{\lfloor n^{\epsilon_0}\rfloor+1}^n p_s(B_s)\d s\bigg| \\
\le  & \sum_{j = \lfloor n^{\epsilon_0} \rfloor}^{n-1} \int_{j}^{j+1} \Big|p_j\Big(z_n + \sqrt{\frac52}S_{\lfloor s\rfloor}\Big) - p_s(B_s) \Big| \d s\\
\le & \sum_{j = \lfloor n^{\epsilon_0} \rfloor}^{n-1} \int_{j}^{j+1} \Big| p_j\Big(z_n + \sqrt{\frac52}S_{\lfloor s\rfloor}\Big) - p_j(B_s) \Big| \d s +  \sum_{j = \lfloor n^{\epsilon_0} \rfloor}^{n-1}  \int_{j}^{j+1} | p_j(B_s) - p_s(B_s) | \d s.
\end{align*}
Recall that $\frac{\log (1+\|z_n\|)}{\log n} \to 0$. By definition of $E_n$, we can assume $n$ to be sufficiently large so that, on the event $E_n$, 
\[
    \|z_n + \sqrt{\frac52}S_{\lfloor s\rfloor} - B_s\| \leq \lfloor s^{\epsilon_0} \rfloor \textrm{ for }n^{\epsilon_0}\leq s\leq n.
\]
Thus, applying the inequality 
\[
    |p_t(x+z) - p_t(x)| \le \frac{1}{2\pi t}\Big|\frac{\|x+z\|^2-\|x\|^2}{2t}\Big|\le \frac{2\|x\|\|z\| + \|z\|^2}{2\pi t^2}
\] 
with $x=B_s$ and $z=z_n + \sqrt{\frac52}S_{\lfloor s\rfloor}-B_s$, we get that on the event $E_n$,
\begin{align}
     \sum_{j = \lfloor n^{\epsilon_0} \rfloor}^{n-1} \int_{j}^{j+1} \Big| p_j\Big(z_n + \sqrt{\frac52}S_{\lfloor s\rfloor}\Big) - p_j(B_s) \Big| \d s 
    \le &  \sum_{j = \lfloor n^{\epsilon_0} \rfloor}^{n-1} \int_{j}^{j+1} \frac{2 \|B_s\| j^{\epsilon_0} + j^{2\epsilon_0}}{2\pi j^2} \d s  \label{eq:pj-En}\\
    \lesssim & \sum_{j = \lfloor n^{\epsilon_0} \rfloor}^{n-1} \Big(j^{-\frac43+\epsilon_0}+j^{-3/2}\Big) \lesssim n^{-\epsilon_0/4}.\nonumber
\end{align}
On the other hand, as $|\partial_t p_t(x)|\leq (2\pi t^2)^{-1}$ for all $x\in \R^2$, we have $|p_j(x) - p_s(x)|\le \frac{1}{2\pi}|\frac1j-\frac1s|\le \frac{1}{2\pi j^2}$ for $s\in[j,j+1)$, and thus
\[
\sum_{j = \lfloor n^{\epsilon_0} \rfloor}^{n-1}  \int_{j}^{j+1} | p_j(B_s) - p_s(B_s) | \d s \le  \sum_{j = \lfloor n^{\epsilon_0} \rfloor}^{n-1}  \frac{1}{2\pi j^2}\lesssim n^{-\epsilon_0}.
\]
Hence, we deduce that on the event $E_n$,
\[
 \bigg|\sum_{j=\lfloor n^{\epsilon_0}\rfloor+1}^{n}p_j\Big(z_n + \sqrt{\frac52}S_j\Big) -  \int_{ \lfloor n^{\epsilon_0} \rfloor+1 }^n p_s(B_s)\d s\bigg| \lesssim n^{-\epsilon_0/4}.
\]

Therefore, we only need to check that for any $p\ge 2$, as $n\to \infty$,
\begin{equation}\label{eq: BMcvg}
\mathbb{P}\bigg(\Big| \int_{n^{\epsilon_0} }^n p_s(B_s)\d s - \frac{1}{4\pi} \log n \Big| \ge \epsilon \log n\bigg) = o((\log n)^{-p}).
\end{equation} 
In fact, we are going to show that for any $m\geq 1$, there exists a constant $C_{\epsilon, m}>0$ such that for all $n\geq1$,
\begin{equation}\label{eq: BMcvg-v0}
\mathbb{P}\bigg(\Big| \int_{1}^{e^n} p_s(B_s)\d s - \frac{1}{4\pi} n \Big| \ge \epsilon n\bigg) \le C_{\epsilon, m} n^{-m}.
\end{equation}
Note that for sufficiently small $\epsilon_0$,
\[
\int_1^{n^{\epsilon_0}}p_s(B_s)\d s \le \int_1^{n^{\epsilon_0}}\frac{1}{2\pi s}\d s= \frac{1}{2\pi }\epsilon_0 \log n \le \frac{\epsilon}{2}\log n,
\]
and that $\int_{e^{\lfloor \log n\rfloor}}^n p_s(B_s)\d s \le 1$. 
Consequently, \eqref{eq: BMcvg} follows from \eqref{eq: BMcvg-v0}.

Let us turn to prove \eqref{eq: BMcvg-v0}. By setting $X_u:=e^{-u/2} B_{e^u}$, we see that for any integer $k\ge 1$,
\begin{equation}
    \label{eq:Yk-stationary}
    Y_k:=\int_{e^{k-1}}^{e^k} p_s(B_s)\d s = \int_{e^{k-1}}^{e^k} \frac{1}{2\pi s} e^{-\frac{\| B_s \|^2}{2s}}\d s = \int_{k-1}^k \frac{1}{2\pi} e^{-\frac{\|X_u\|^2}{2}}\d u.
\end{equation}
Since $(X_u)_{u\ge0}$ is a two-dimensional stationary Ornstein--Uhlenbeck process, with the invariant probability measure $\mathcal{N}(0, \mathrm{Id})$, we have for all $k\geq 1$,
\[
\mathbb{E}[Y_k]=\frac{1}{2\pi}\mathbb{E}\Big[e^{-\frac{\|X_0\|^2}{2}}\Big]= \frac{1}{4\pi}.
\]
Now we have a uniformly bounded stationary sequence $(Y_k-\frac{1}{4\pi})_{k\ge1}$ with zero mean. By adapting the proof of Proposition 2.13 in \cite{BW22}, we can conclude that for any $m\geq 1$, there exists a constant $C_{\epsilon, m}>0$ such that for all $n\geq1$,
\[
    \mathbb{P}\Big(\Big|Y_1+\cdots Y_n - \frac{1}{4\pi}n \Big| \ge \epsilon n \Big)\le C_{\epsilon, m} n^{-m}.
\]
The proof is therefore complete.
\end{proof}

\begin{proof}[Proof of Lemma \ref{lem: SRW-sqrt}] 
We will proceed analogously to the proof of Lemma \ref{lem: SRW}. Let us fix $\varepsilon\in (0,1/2)$, and keep the same notation used in the proof of Lemma \ref{lem: SRW}.

First of all, it suffices to prove \eqref{eq:SRWcvg-sqrt} after replacing $\Gamma_{\lfloor \log n\rfloor +1}^n(0)$ by 
\[
\widehat{\Gamma}_{\lfloor \log n\rfloor+1}^n(0)=  \sum_{j=\lfloor \log n\rfloor+1}^n p_j\Big(\sqrt{\frac52}S_j\Big).
\]
Since
\[
\sum_{j=\lfloor \log n\rfloor+1}^{\lfloor\exp( \sqrt{\log n})\rfloor} p_j\Big(\sqrt{\frac52}S_j\Big) \lesssim  \sqrt{\log n},
\]
we only need to focus on 
\[
    \sum_{j=\lfloor\exp( \sqrt{\log n})\rfloor+1}^{n} p_j\Big(\sqrt{\frac52}S_j\Big). 
\]
Over the same event $E_n$ defined in \eqref{eq:KMT-set}, we can approximate this sum by 
\[
    \int_{\lfloor \exp( \sqrt{\log n})\rfloor+1}^{n} p_s(B_s)\d s .
\]
More precisely, similarly as in \eqref{eq:pj-En}, we get on the $E_n$,
\begin{align*}
     \sum_{j = \lfloor \exp( \sqrt{\log n}) \rfloor}^{n-1} \int_{j}^{j+1} \Big| p_j\Big(\sqrt{\frac52}S_{\lfloor s\rfloor}\Big) - p_j(B_s) \Big| \d s 
\le &  \sum_{j = \lfloor \exp( \sqrt{\log n}) \rfloor}^{n-1} \int_{j}^{j+1} \frac{2 \|B_s\| C\log n + (C\log n)^2}{2\pi j^2} \d s\\
\lesssim & \sum_{j = \lfloor \exp( \sqrt{\log n}) \rfloor}^{n-1} \Big(j^{-\frac43}\log n +j^{-2}(\log n)^2\Big) \lesssim (\log n)^{-p}
\end{align*}
for any integer $p\geq 1$.
Meanwhile, we also have
\[
\sum_{j = \lfloor \exp( \sqrt{\log n}) \rfloor}^{n-1}  \int_{j}^{j+1} | p_j(B_s) - p_s(B_s) | \d s \le \sum_{j = \lfloor \exp( \sqrt{\log n}) \rfloor}^{n-1}  \frac{1}{2\pi j^2}\lesssim (\log n)^{-p}.
\]
Hence, we deduce that on the event $E_n$, for any integer $p\geq 1$, 
\[
 \bigg|\sum_{j=\lfloor \exp( \sqrt{\log n}) \rfloor+1}^{n}p_j\Big(\sqrt{\frac52}S_j\Big) -  \int_{ \lfloor \exp( \sqrt{\log n})  \rfloor+1 }^n p_s(B_s)\d s\bigg| \lesssim (\log n)^{-p}.
\]

Finally, as $n\to \infty$, we have
\begin{equation*}
\mathbb{P}\bigg(\Big| \int_{\exp( \sqrt{\log n})}^n p_s(B_s)\d s - \frac{1}{4\pi} \log n \Big| \ge (\log n)^{\frac12+\varepsilon}\bigg) = \frac{O(1)}{(\log n)^{3/2-\varepsilon}},
\end{equation*} 
by showing the following analog of \eqref{eq: BMcvg-v0} that
\begin{equation}
\label{eq: BMcvg-v1}
\mathbb{P}\bigg(\Big| \int_{1}^{e^n} p_s(B_s)\d s - \frac{1}{4\pi} n \Big| \ge n^{\frac12+\varepsilon}\bigg) =\frac{O(1)}{n^{3/2-\varepsilon}}.
\end{equation}
For the same stationary sequence $(Y_k)$ defined in \eqref{eq:Yk-stationary}, it follows from Proposition 2.13 in \cite{BW22} that for any $m\geq 1$, there exists some constant $C(m)$ such that, for any $n\geq 1$, 
\[
    \mathbb{E}\Bigg[\bigg|\sum_{k=1}^n Y_k-\frac{1}{4\pi}n\bigg|^m\Bigg]\leq C(m) \cdot n^{\frac{m}{2}}.
\]
Applying a Chebyshev-type inequality with $m=\lfloor \frac{3}{2\varepsilon}\rfloor$, we deduce that there exists a constant $C_{\varepsilon}$ such that for all $n\geq 1$,
\[
    \mathbb{P}\Big(\Big|Y_1+\cdots Y_n - \frac{1}{4\pi}n \Big| \ge n^{\frac12+\varepsilon} \Big)\le C_{\varepsilon} n^{-m\varepsilon}.
\]
As $(m+1)\varepsilon >\frac32$, this gives \eqref{eq: BMcvg-v1} and finishes the proof of Lemma \ref{lem: SRW-sqrt}.
\end{proof}

\bibliographystyle{alpha}

\end{document}